\pdfoutput=1
\documentclass[11pt]{amsart}
\usepackage[T1]{fontenc}
\usepackage{lmodern}
\usepackage{amsmath,amssymb,amsthm}
\usepackage[margin=1in]{geometry}
\usepackage[hidelinks]{hyperref}
\hypersetup{
  pdftitle={Wallis-type products with polynomial exponents and the Dirichlet beta function at negative integers},
  pdfauthor={Joshua W. E. Farrell}
}

\newtheorem{theorem}{Theorem}[section]
\newtheorem{proposition}[theorem]{Proposition}
\newtheorem{lemma}[theorem]{Lemma}

\theoremstyle{definition}
\newtheorem{definition}[theorem]{Definition}
\newtheorem{remark}[theorem]{Remark}
\newtheorem{example}[theorem]{Example}

\title[Wallis-type products and beta at negative integers]{Wallis-type products with polynomial exponents and the Dirichlet beta function at negative integers}
\author{Joshua W. E. Farrell}

\begin{document}

\begin{abstract}
We develop a methodology for designing infinite products of rational blocks whose exponents are polynomials in the index~$k$. Matching power sums of the slot constants through order~$n$ forces the Type-$N$ product with binomial exponent $\binom{n+k-2}{n-1}$ to converge to a ratio of Vign\'eras multiple gamma values~$\Gamma_{n}$; an analogue finder lifts any Type-1 evaluation to every higher type, and integer combinations of binomial exponents then realise arbitrary integer-valued polynomial exponents, yielding explicit products with exponents $k$, $k^{2}$, $k^{3}$, \ldots\ for constants such as $\pi/2$, $\sqrt{2}$, $e^{2K/\pi}$, and rational multiples of $\pi^{M!}$ (Part~I). As the main application (Part~II) we prove that for every positive integer~$n$, a finite multiple-gamma template $\mathcal{S}_{n}$ with generalised Eulerian weights $T(n,k)$ (OEIS A225118) evaluates the Duke--Imamo\u{g}lu expression $\mathcal{D}_{n}=\beta'(-n)+(\log 4)\,\beta(-n)$. For odd~$n$ this yields a convergent Wallis--Eulerian product for~$e^{\beta'(-n)}$; for even~$n$ the raw product diverges. The proof expands the template through the multiple-gamma functional equation, evaluates the quarter-integer coefficients in closed form, and identifies the resulting Eulerian--binomial sums with Duke's polynomials~$P_{n+1,\ell}$.
\end{abstract}

\keywords{Dirichlet beta function, multiple gamma function, Wallis product, Eulerian numbers}
\subjclass[2020]{Primary 11M35; Secondary 33B15, 11B68}

\maketitle

\setcounter{tocdepth}{1}
\tableofcontents

\section{Introduction}\label{sec:intro}

In 1655 Wallis discovered his celebrated product $\frac{\pi}{2}=\frac{2\cdot 2}{1\cdot 3}\cdot\frac{4\cdot 4}{3\cdot 5}\cdot\frac{6\cdot 6}{5\cdot 7}\cdots$~\cite{ARTICLE:4,ARTICLE:5}. A modern proof multiplies Weierstrass factorisations of the gamma function: whenever the first moments of two slot sequences agree, the exponential corrections cancel and the product of blocks converges to a ratio of gamma values~\cite{huylebrouck2015generalizing}. This paper is about what happens when one keeps going. Each further matching power sum buys one more degree of polynomial growth in the block exponent, with the limit an explicit ratio of Vign\'eras multiple gamma values $\Gamma_{n}$~\cite{ARTICLE:2}: second moments give Barnes $G$-function products with exponent~$k$~\cite{barnes1899gfunction,article:6}, and power sums through order~$n$ give Type-$N$ products with exponent~$\binom{n+k-2}{n-1}$. Part~I turns this hierarchy into a design method (choose slot constants, lift them to any type, combine binomial exponents into arbitrary integer-valued polynomials), and Part~II applies the method to prove an identity for the Dirichlet beta function
\[
\beta(s)=\sum_{m=0}^{\infty}\frac{(-1)^{m}}{(2m+1)^{s}}
\]
at negative integers, where closed forms at positive integers~\cite{apostol1998analytic} and special-value relations with Euler numbers~\cite{idowu2012fundamental} were known, but no product formula with polynomial exponents was available beyond the $n=1$ level (Remark~\ref{rem:catalan-beta}).

\paragraph{Headline products.}
Four evaluations illustrate the scope; in each product $k$ is the block index and $n$ is a fixed parameter (the generalised Eulerian level in~\eqref{E: intro beta}):
{\small
\begin{align}
\prod_{k=1}^{\infty}\left[\frac{(2k)^{2}(2k+3)}{(2k-1)(2k+2)^{2}}\right]^{k}
&=\frac{\pi}{2}, \label{E: intro wallis-type2}\\
\prod_{k=1}^{\infty}\left[\frac{(4k-2)^{2}(4k+1)(4k+3)}{(4k-3)(4k-1)(4k+2)^{2}}\right]^{k}
&=\sqrt{2}, \label{E: intro root2-type2}\\
\prod_{k=1}^{\infty}\left[\frac{(2k)^{2}(2k+1)^{2}(2k+4)^{6}(2k+7)}{(2k-1)(2k+2)^{6}(2k+5)^{2}(2k+6)^{2}}\right]^{k^{3}}
&=\frac{3645}{1048576}\,\pi^{6}, \label{E: intro pi6}\\
\prod_{k=0}^{\infty}\Big[\frac{(4k+3)^{T(n,0)}\cdots(4k+4n+3)^{T(n,n)}}{(4k+1)^{T(n,n)}\cdots(4k+4n+1)^{T(n,0)}}\Big]^{\binom{n+k}{n}}
&=e^{\beta'(-n)}\ \text{(odd $n$)}, \label{E: intro beta}
\end{align}
}
Equations~\eqref{E: intro wallis-type2} and~\eqref{E: intro root2-type2} are Type-2 products with exponent~$k$, proved in Section~\ref{sec:type2};~\eqref{E: intro pi6} has exponent $k^{3}$ and is assembled from a Type-4 ladder in Section~\ref{sec:poly-powers}. The Wallis--Eulerian product~\eqref{E: intro beta}, with weights $T(n,k)$ the coefficients of the generalised Eulerian polynomial $A_{n,4}(x)$~\cite{xiong2013general,oeisA225118,luschny2022eulerian}, is the main theorem, proved in Part~II for odd~$n$; for even~$n$ the raw product diverges. The chain~\eqref{E: intro wallis-type2} $\to$~\eqref{E: intro pi6} $\to$~\eqref{E: intro beta} is genuinely one construction applied with increasing ambition: the same power-sum mechanism underlies all three.

\paragraph{Positive vs.\ negative integers.}
At positive integers the beta derivative already admits a convergent product~\cite{weisstein2002beta}:
\[
e^{\beta'(n)}=\prod_{k=0}^{\infty}\frac{(4k+3)^{1/(4k+3)^{n}}}{(4k+1)^{1/(4k+1)^{n}}}.
\]
No analogous naive product exists at negative integers because the defining series diverges; the $T(n,k)$-weighted block form~\eqref{E: intro beta} fills that gap. Duke and Imamo\u{g}lu~\cite{article:duke2006} expressed the combination $\beta'(-n)+(\log 4)\beta(-n)$ as a finite sum of log-multiple-gamma terms at $1/4$ and~$3/4$; Part~II proves the complementary \emph{product side} and matches the two.

\paragraph{Roadmap.}
Part~I (Sections~\ref{sec:type1}--\ref{sec:poly-powers}) is self-contained and develops the product-design toolkit in order of increasing type: Type-1 products via the gamma function (Section~\ref{sec:type1}), Type-2 products via the Barnes $G$-function, including the exponential Catalan product $e^{2K/\pi}$ that first signalled the beta connection (Section~\ref{sec:type2}), the Type-$N$ theorem in standard and shifted form together with the analogue finder that lifts any Type-1 value to every type (Section~\ref{sec:products-typeN}), and the polynomial-exponent constructions $k^{2}$, $k^{3}$, $\pi^{M!}$ with an explicit design recipe (Section~\ref{sec:poly-powers}).

Part~II (Sections~\ref{sec:main-results}--\ref{sec:Ln}) proves the beta identity. Section~\ref{sec:main-results} states the headline results; Section~\ref{sec:proof-map} records the coefficient target and five-step spine; the proofs occupy Sections~\ref{sec:prelims}--\ref{sec:Ln} in dependency order. The product side is the shifted Type-$(N+1)$ construction of Section~\ref{sec:products-typeN} at quarter-integer slots with Eulerian weights; the new ingredient beyond Part~I is a closed-form evaluation of the functional-equation (FE) coefficients in the quarter-integer basis (Lemma~\ref{lem:A-closed}), which converts the template into Eulerian--binomial sums identified with Duke's polynomials~$P_{n+1,\ell}$ (Lemma~\ref{lem:T-binom-P}).

\part{Designing infinite products with polynomial exponents}\label{part:products}


\section{Type-1 products via the gamma function}\label{sec:type1}

Throughout Part I, $i\in\mathbb{N}_{+}$ is a fixed number of \emph{slots} and $(a_{\ell})_{\ell=1}^{i}$, $(b_{\ell})_{\ell=1}^{i}$ are finite slot sequences. A \emph{block} is the rational function
\begin{align}\label{E: p1 block}
\frac{(k+a_{1})\cdots(k+a_{i})}{(k+b_{1})\cdots(k+b_{i})}
\end{align}
of the product index~$k$, and we study infinite products of blocks raised to exponents that grow polynomially in~$k$. The theme of Part I is that \emph{moment constraints on the slots buy polynomial exponents}: each additional matching power sum $\sum_{\ell}a_{\ell}^{j}=\sum_{\ell}b_{\ell}^{j}$ raises by one the degree of exponent for which the product converges, and the limit is always an explicit ratio of multiple gamma values. We begin at degree zero.

\begin{definition}[Gamma function]\label{def:gamma}
The gamma function is the unique function with $\Gamma(z+1)=z\Gamma(z)$, $\Gamma(1)=1$, and $\log\Gamma$ log-convex on $(0,\infty)$, that is, $\log\Gamma$ is convex on $(0,\infty)$ (Bohr--Mollerup~\cite{article:9}). Since $1/\Gamma$ is entire, the Weierstrass factorisation
\begin{align}\label{E: p1 gamma weier}
\Gamma(z)=\frac{e^{-\gamma z}}{z}\prod_{k=1}^{\infty}\Big(1+\frac{z}{k}\Big)^{-1}e^{z/k}
\end{align}
holds for all $z\in\mathbb{C}\setminus\mathbb{Z}_{\leq 0}$~\cite{article:8}. We use freely the functional equation and Euler's reflection formula,
\begin{align}\label{E: p1 gamma fe}
\Gamma(z+1)=z\Gamma(z),\qquad
\Gamma(z)\Gamma(1-z)=\frac{\pi}{\sin(\pi z)}\quad(z\notin\mathbb{Z}).
\end{align}
\end{definition}

\begin{theorem}[Type-1 product]\label{thm:type1}
If $a_{\ell},b_{\ell}\in\mathbb{C}\setminus\mathbb{Z}_{\leq 0}$ and $\sum_{\ell=1}^{i}a_{\ell}=\sum_{\ell=1}^{i}b_{\ell}$, then
\begin{align}\label{E: p1 type1}
\prod_{k=0}^{\infty}\frac{(k+a_{1})\cdots(k+a_{i})}{(k+b_{1})\cdots(k+b_{i})}
=\frac{\Gamma(b_{1})\cdots\Gamma(b_{i})}{\Gamma(a_{1})\cdots\Gamma(a_{i})}.
\end{align}
Equivalently, shifting every slot by one: if $a_{\ell},b_{\ell}\notin\mathbb{Z}_{<0}$,
\begin{align}\label{E: p1 type1 alt}
\prod_{k=1}^{\infty}\frac{(k+a_{1})\cdots(k+a_{i})}{(k+b_{1})\cdots(k+b_{i})}
=\frac{\Gamma(b_{1}+1)\cdots\Gamma(b_{i}+1)}{\Gamma(a_{1}+1)\cdots\Gamma(a_{i}+1)}.
\end{align}
Products of this form are called \emph{Type-1 products}.
\end{theorem}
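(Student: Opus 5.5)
The plan is to derive \eqref{E: p1 type1} directly from the Weierstrass factorisation \eqref{E: p1 gamma weier}, applied separately to each slot. I work with partial products over $0\le k\le K$ and let $K\to\infty$. For a single slot constant $a\notin\mathbb{Z}_{\le 0}$, I rewrite \eqref{E: p1 gamma weier} as
\[
\frac{1}{\Gamma(a)}=a\,e^{\gamma a}\prod_{k=1}^{\infty}\Big(1+\frac{a}{k}\Big)e^{-a/k}.
\]
Truncating at $K$ gives an asymptotic identity for the finite product:
\[
\prod_{k=0}^{K}(k+a)=K!\,\frac{1}{\Gamma(a)}\,e^{-\gamma a}\,e^{aH_{K}}\,(1+o(1)),
\]
where $H_{K}=\sum_{k=1}^{K}1/k$. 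This holds because $a\prod_{k=1}^{K}(1+a/k)=\prod_{k=0}^{K}(k+a)/K!$, and the truncated Weierstrass product converges to the full one.

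Next I form the ratio of such expressions, multiplying over the numerator slots $a_{1},\dots,a_{i}$ and the denominator slots $b_{1},\dots,b_{i}$. The factors $K!$ appear $i$ times above and $i$ times below, so they cancel exactly. The exponential corrections combine into
\[
\exp\Big(\big(-\gamma+H_{K}\big)\Big(\sum_{\ell}a_{\ell}-\sum_{\ell}b_{\ell}\Big)\Big),
\]
and this equals $1$ identically under the hypothesis $\sum_{\ell}a_{\ell}=\sum_{\ell}b_{\ell}$. What remains is $\prod_{\ell}\Gamma(b_{\ell})/\Gamma(a_{\ell})$ times a factor $1+o(1)$, which gives \eqref{E: p1 type1}.

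This argument needs no denominator factor $k+b_{\ell}$ to vanish; that is guaranteed by $b_{\ell}\notin\mathbb{Z}_{\le0}$. The numerator may contain zeros only if some $a_{\ell}\in\mathbb{Z}_{\le 0}$, which is excluded. Convergence of the infinite product in the usual sense (limit of partial products, nonzero) then follows from the convergence of the Weierstrass products themselves.

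The only point needing care is the claim that the truncated Weierstrass product converges, uniformly enough, to the full one for each fixed complex $a$. This is standard: $(1+a/k)e^{-a/k}=1+O(|a|^{2}/k^{2})$, so the product is absolutely convergent. I therefore expect no real obstacle, only bookkeeping of the $K!$ and $H_{K}$ terms.

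For \eqref{E: p1 type1 alt}, I substitute $a_{\ell}\mapsto a_{\ell}+1$ and $b_{\ell}\mapsto b_{\ell}+1$ in \eqref{E: p1 type1} and reindex $k\mapsto k-1$. The condition $a_{\ell}+1\notin\mathbb{Z}_{\le0}$ becomes $a_{\ell}\notin\mathbb{Z}_{<0}$, and the balance condition on the sums is unchanged because each side shifts by the same amount $i$.
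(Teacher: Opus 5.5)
Your argument is correct and is essentially the paper's proof: both multiply the Weierstrass factorisations slot by slot and observe that every exponential correction has exponent proportional to $T_{1}=\sum_{\ell}a_{\ell}-\sum_{\ell}b_{\ell}$, so all corrections vanish. The only difference is bookkeeping. The paper proves the $k\geq 1$ form first from the factorisation of $\Gamma(z+1)$ and then shifts slots down, whereas you prove the $k\geq 0$ form directly with partial products (your explicit $K!$ and $H_{K}$ factors justify the grouping by index) and shift the other way.
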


\begin{proof}
From~\eqref{E: p1 gamma weier}, for $z\notin\mathbb{Z}_{<0}$,
\[
\Gamma(z+1)=z\Gamma(z)=e^{-\gamma z}\prod_{k=1}^{\infty}\Big(1+\frac{z}{k}\Big)^{-1}e^{z/k}.
\]
Writing $T_{1}:=\sum_{\ell}a_{\ell}-\sum_{\ell}b_{\ell}$ and multiplying the factorisations,
\[
\frac{\Gamma(b_{1}+1)\cdots\Gamma(b_{i}+1)}{\Gamma(a_{1}+1)\cdots\Gamma(a_{i}+1)}
=e^{\gamma T_{1}}\prod_{k=1}^{\infty}\left\{\frac{(k+a_{1})\cdots(k+a_{i})}{(k+b_{1})\cdots(k+b_{i})}\,e^{-T_{1}/k}\right\},
\]
where each factor was grouped by the index~$k$ (legitimate since each individual product converges). If $T_{1}=0$, both exponential corrections disappear, giving~\eqref{E: p1 type1 alt}. Replacing every slot $a_{\ell}\mapsto a_{\ell}-1$, $b_{\ell}\mapsto b_{\ell}-1$ and reindexing $k\mapsto k+1$ gives~\eqref{E: p1 type1}.
\end{proof}

The manipulation of gamma values needed to evaluate the right side of~\eqref{E: p1 type1} is algorithmic: apply the functional equation until every argument lies in $(0,1]$, then substitute known values, most often $\Gamma(1/2)=\sqrt{\pi}$ and the reflection formula in~\eqref{E: p1 gamma fe}. The first example is the canonical one.

\begin{example}[Wallis product~\cite{ARTICLE:4,ARTICLE:5}]\label{ex:wallis}
\begin{align}\label{E: p1 wallis}
\prod_{k=1}^{\infty}\frac{(2k)^{2}}{(2k-1)(2k+1)}=\frac{\pi}{2}.
\end{align}
\end{example}

\begin{proof}
Theorem~\ref{thm:type1}, form~\eqref{E: p1 type1 alt}, with $i=2$ and $a_{1}=a_{2}=0$, $b_{1}=-\tfrac12$, $b_{2}=\tfrac12$ gives
\[
\prod_{k=1}^{\infty}\frac{k^{2}}{(k-\tfrac12)(k+\tfrac12)}=\frac{\Gamma(1/2)\,\Gamma(3/2)}{\Gamma(1)^{2}}
=\sqrt{\pi}\cdot\frac{\sqrt{\pi}}{2}=\frac{\pi}{2}. \qedhere
\]
\end{proof}

\begin{example}[Golden ratio]\label{ex:golden}
\begin{align}\label{E: p1 golden}
\prod_{k=0}^{\infty}\frac{(30k+9)(30k+21)}{(30k+5)(30k+25)}=\frac{1+\sqrt{5}}{2}=\varphi.
\end{align}
\end{example}

\begin{proof}
Theorem~\ref{thm:type1} with $a_{1}=\tfrac{3}{10}$, $a_{2}=\tfrac{7}{10}$, $b_{1}=\tfrac16$, $b_{2}=\tfrac56$ gives $\Gamma(1/6)\Gamma(5/6)/(\Gamma(3/10)\Gamma(7/10))$. By reflection this equals $\sin(3\pi/10)/\sin(\pi/6)=2\cos(\pi/5)=\varphi$.
\end{proof}

\begin{proposition}[Nested radicals]\label{prop:nested-radical}
For every $m\in\mathbb{N}_{+}$,
\begin{align}\label{E: p1 radical}
\prod_{k=0}^{\infty}\frac{(2^{m+1}k+2)(2^{m+1}k+2^{m+1}-2)}{(2^{m+1}k+1)(2^{m+1}k+2^{m+1}-1)}
=\underbrace{\sqrt{2+\sqrt{2+\sqrt{2+\cdots}}}}_{m}.
\end{align}
\end{proposition}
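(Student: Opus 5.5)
Write $N=2^{m+1}$ and divide every factor by $N$, so that the $k$-th block becomes
\[
\frac{(k+a_{1})(k+a_{2})}{(k+b_{1})(k+b_{2})},\qquad a_{1}=\tfrac{2}{N},\ a_{2}=1-\tfrac{2}{N},\ b_{1}=\tfrac{1}{N},\ b_{2}=1-\tfrac{1}{N}.
\]
These slots satisfy $a_{1}+a_{2}=1=b_{1}+b_{2}$ and none lies in $\mathbb{Z}_{\le 0}$ (since $N\ge 4$). Theorem~\ref{thm:type1}, in the form~\eqref{E: p1 type1}, therefore applies and gives the value
\[
\frac{\Gamma(1/N)\,\Gamma(1-1/N)}{\Gamma(2/N)\,\Gamma(1-2/N)}.
\]

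Next apply Euler's reflection formula from~\eqref{E: p1 gamma fe} to the numerator and to the denominator. This turns the quotient into
\[
\frac{\sin(2\pi/N)}{\sin(\pi/N)}=2\cos(\pi/N)=2\cos\!\big(\pi/2^{m+1}\big),
\]
using the double-angle formula. This is the same mechanism as in Example~\ref{ex:golden}.

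It then remains to identify $2\cos(\pi/2^{m+1})$ with the $m$-fold nested radical. I would do this by induction on $m$, reading the radical with $m$ square-root signs, the innermost being $\sqrt{2}$. For $m=1$ the claim is $2\cos(\pi/4)=\sqrt2$. For the inductive step, the half-angle identity $2\cos(\theta/2)=\sqrt{2+2\cos\theta}$ holds for $\theta\in[0,\pi]$, since then the cosine is nonnegative. Taking $\theta=\pi/2^{m+1}$ gives
\[
2\cos(\pi/2^{m+2})=\sqrt{2+2\cos(\pi/2^{m+1})},
\]
which is exactly "$\sqrt{2+(\text{previous radical})}$".

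I expect no real obstacle here. The only points needing care are the following.
\begin{itemize}
\item The convention for the underbraced radical must match: the $m$ counts the square roots, and for $m=1$ the expression is $\sqrt2$. Note that $\sqrt{2+\sqrt2}$ corresponds to $m=2$, with $N=8$, giving $2\cos(\pi/8)$, which is consistent.
\item Checking that the $k=0$ factor is covered by form~\eqref{E: p1 type1} rather than~\eqref{E: p1 type1 alt}.
\item The sign condition in the half-angle step.
\end{itemize}
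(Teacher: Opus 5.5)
Your proposal is correct and is essentially the paper's argument: the same slots $a_{1}=2^{-m}$, $a_{2}=1-2^{-m}$, $b_{1}=2^{-(m+1)}$, $b_{2}=1-2^{-(m+1)}$ in Theorem~\ref{thm:type1}, then the reflection formula to reach $2\cos(\pi/2^{m+1})$, then unwinding via $2\cos(\theta/2)=\sqrt{2+2\cos\theta}$. Your induction, including the nonnegativity check on the cosine, just spells out the paper's one-line ``iterating the double-angle identity'' step.
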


\begin{proof}
Theorem~\ref{thm:type1} with $a_{1}=2^{-m}$, $a_{2}=1-2^{-m}$, $b_{1}=2^{-(m+1)}$, $b_{2}=1-2^{-(m+1)}$ and reflection give
\[
\frac{\Gamma(2^{-(m+1)})\,\Gamma(1-2^{-(m+1)})}{\Gamma(2^{-m})\,\Gamma(1-2^{-m})}
=\frac{\sin(2^{-m}\pi)}{\sin(2^{-(m+1)}\pi)}
=2\cos\Big(\frac{\pi}{2^{m+1}}\Big).
\]
Iterating the double-angle identity $2\cos\theta=\sqrt{2+2\cos 2\theta}$ unwinds $2\cos(\pi/2^{m+1})$ into the $m$-fold nested radical.
\end{proof}

At $m=1$, Proposition~\ref{prop:nested-radical} is Catalan's 1875 product for $\sqrt{2}$~\cite{article:7}; at $m=2$ it recovers a product of Sondow and Yi~\cite{ARTICLE:3}. Rational targets are also designable.

\begin{example}[Rational values]\label{ex:rational-pq}
For $p,q\in\mathbb{N}_{+}$,
\begin{align}\label{E: p1 rational}
\prod_{k=1}^{\infty}\frac{(k+1)(qk+p-q)}{k\,(qk+p)}=\frac{p}{q}.
\end{align}
\end{example}

\begin{proof}
Theorem~\ref{thm:type1}, form~\eqref{E: p1 type1 alt}, with $a_{1}=1$, $a_{2}=\tfrac{p-q}{q}$, $b_{1}=0$, $b_{2}=\tfrac{p}{q}$: the value is $\Gamma(1)\Gamma(\tfrac{p}{q}+1)/(\Gamma(2)\Gamma(\tfrac{p}{q}))=p/q$.
\end{proof}

Since convergent products may be multiplied, Example~\ref{ex:rational-pq} combined with Example~\ref{ex:wallis} yields a Type-1 product for every positive rational multiple of~$\pi$, and more generally for any rational multiple of any power of~$\pi$. The reflection formula likewise gives Type-1 products for $\sin(p\pi/q)$ with $a_{1}=p/q$, $a_{2}=1-p/q$, $b_{1}=b_{2}=1/2$. The slot constants are cheap to design because only the \emph{first} moment is constrained; the next section adds a second moment constraint and, in return, an exponent.

\section{Type-2 products via the Barnes $G$-function}\label{sec:type2}

\begin{definition}[Barnes $G$-function]\label{def:barnesG}
The Barnes $G$-function~\cite{barnes1899gfunction} extends the superfactorial via $G(z+1)=\Gamma(z)G(z)$, $G(1)=1$, together with a convexity normalisation analogous to Bohr--Mollerup. Its Weierstrass form,
\begin{align}\label{E: p1 barnes weier}
G(z+1)=(2\pi)^{z/2}\exp\Big(-\frac{z+z^{2}(1+\gamma)}{2}\Big)
\prod_{k=1}^{\infty}\Big\{\Big(1+\frac{z}{k}\Big)^{k}\exp\Big(\frac{z^{2}}{2k}-z\Big)\Big\},
\end{align}
is valid for all $z\in\mathbb{C}$~\cite{article:10}.
\end{definition}

\begin{definition}[Two constants]\label{def:constants}
Catalan's constant is $K:=\sum_{k=0}^{\infty}(-1)^{k}(2k+1)^{-2}\approx 0.9159655942$ (written $K$ rather than $\mathbf{G}$ to avoid a clash with the Barnes function; this $K$ is not the product block index~$k$), and the Glaisher--Kinkelin constant is $A:=\exp(\tfrac{1}{12}-\zeta'(-1))\approx 1.2824271291$~\cite{article:12}.
\end{definition}

\begin{theorem}[Type-2 product]\label{thm:type2}
If $a_{\ell},b_{\ell}\in\mathbb{C}\setminus\mathbb{Z}_{<0}$, $\sum_{\ell}a_{\ell}=\sum_{\ell}b_{\ell}$, and $\sum_{\ell}a_{\ell}^{2}=\sum_{\ell}b_{\ell}^{2}$, then
\begin{align}\label{E: p1 type2}
\prod_{k=1}^{\infty}\left[\frac{(k+a_{1})\cdots(k+a_{i})}{(k+b_{1})\cdots(k+b_{i})}\right]^{k}
=\frac{G(a_{1}+1)\cdots G(a_{i}+1)}{G(b_{1}+1)\cdots G(b_{i}+1)}.
\end{align}
Products of this form are called \emph{Type-2 products}.
\end{theorem}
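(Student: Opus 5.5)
We mirror the proof of Theorem~\ref{thm:type1}, now using the Weierstrass form~\eqref{E: p1 barnes weier} of the Barnes $G$-function. The idea is to take the quotient $\prod_{\ell}G(a_{\ell}+1)/\prod_{\ell}G(b_{\ell}+1)$, multiply the factorisations term by term, and check that the two moment hypotheses cancel every non-product factor.

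\textbf{Step 1: the global prefactors.} For a single argument $z$ the prefactor in~\eqref{E: p1 barnes weier} is
\[
(2\pi)^{z/2}\exp\bigl(-\tfrac{z}{2}-\tfrac{(1+\gamma)z^{2}}{2}\bigr).
\]
Set $T_{1}:=\sum_{\ell}a_{\ell}-\sum_{\ell}b_{\ell}$ and $T_{2}:=\sum_{\ell}a_{\ell}^{2}-\sum_{\ell}b_{\ell}^{2}$. The quotient of prefactors is then
\[
(2\pi)^{T_{1}/2}\exp\bigl(-\tfrac{T_{1}}{2}-\tfrac{(1+\gamma)T_{2}}{2}\bigr),
\]
which equals $1$ because $T_{1}=T_{2}=0$.

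\textbf{Step 2: the $k$-th factors.} Each individual Weierstrass product converges, and there are finitely many slots. So the finitely many convergent products may be multiplied factor by factor and grouped by the index $k$, exactly as in the Type-1 proof. This gives
\[
\frac{\prod_{\ell}G(a_{\ell}+1)}{\prod_{\ell}G(b_{\ell}+1)}
=\prod_{k=1}^{\infty}\left\{\prod_{\ell}\frac{(1+a_{\ell}/k)^{k}}{(1+b_{\ell}/k)^{k}}\,
\exp\Bigl(\frac{T_{2}}{2k}-T_{1}\Bigr)\right\}.
\]
The exponential correction in each factor vanishes because $T_{1}=T_{2}=0$. In the remaining ratio, the powers of $k$ cancel:
\[
\prod_{\ell}(1+a_{\ell}/k)^{k}\Big/\prod_{\ell}(1+b_{\ell}/k)^{k}=\Bigl[\prod_{\ell}(k+a_{\ell})\Big/\prod_{\ell}(k+b_{\ell})\Bigr]^{k},
\]
since both sides carry the same number $i$ of slots. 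This is exactly the left side of~\eqref{E: p1 type2}.

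\textbf{Main obstacle: branches.} Both the raising to the power $k$ and the grouping involve complex arguments, so one must check that no branch ambiguity arises. Here $k$ is an integer, so $(1+z/k)^{k}$ is an honest polynomial power and no branch issue occurs. The hypothesis $a_{\ell},b_{\ell}\notin\mathbb{Z}_{<0}$ guarantees two things: $G(b_{\ell}+1)\neq 0$, since the zeros of $G$ are at the nonpositive integers, and no factor $k+b_{\ell}$ vanishes for $k\ge1$.

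Convergence of the grouped product also needs justifying. Its factor-by-factor form inherits convergence from the individual products. Alternatively, a direct expansion shows the $k$-th block has logarithm
\[
k\log\frac{\prod_{\ell}(1+a_{\ell}/k)}{\prod_{\ell}(1+b_{\ell}/k)}
=k\Bigl(\frac{T_{1}}{k}-\frac{T_{2}}{2k^{2}}+O(k^{-3})\Bigr)=O(k^{-2}),
\]
which is summable. This expansion confirms that the two moment conditions are exactly what is needed for the exponent $k$.
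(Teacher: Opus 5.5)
Your proposal is correct and follows the paper's proof: multiply the Weierstrass factorisations of $G$, group by the index $k$, and observe that $T_{1}=T_{2}=0$ kills both the global prefactor $(2\pi)^{T_{1}/2}\exp\bigl(-\tfrac12(T_{1}+(1+\gamma)T_{2})\bigr)$ and every local correction $\exp\bigl(T_{2}/(2k)-T_{1}\bigr)$. Your extra remarks (nonvanishing of $G(b_{\ell}+1)$, cancellation of the $k^{ik}$ factors, and the $O(k^{-2})$ decay of the block logarithms) only make explicit what the paper leaves implicit.
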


\begin{proof}
Set $T_{1}:=\sum_{\ell}a_{\ell}-\sum_{\ell}b_{\ell}$ and $T_{2}:=\sum_{\ell}a_{\ell}^{2}-\sum_{\ell}b_{\ell}^{2}$. Multiplying the factorisations~\eqref{E: p1 barnes weier} and grouping by the index~$k$,
\[
\frac{G(a_{1}+1)\cdots G(a_{i}+1)}{G(b_{1}+1)\cdots G(b_{i}+1)}
=(2\pi)^{T_{1}/2}\,e^{S}\prod_{k=1}^{\infty}\left\{\left[\frac{(k+a_{1})\cdots(k+a_{i})}{(k+b_{1})\cdots(k+b_{i})}\right]^{k}
\exp\Big(\frac{T_{2}}{2k}-T_{1}\Big)\right\},
\]
where $S=-\tfrac12\big(T_{1}+T_{2}(1+\gamma)\big)$. Under the hypotheses $T_{1}=T_{2}=0$, hence $S=0$ and every correction factor is~$1$.
\end{proof}

The evaluation routine parallels the Type-1 case: reduce all $G$-arguments to $(0,1]$ with $G(z+1)=\Gamma(z)G(z)$, then finish with gamma manipulations. The first example matters most for this paper.

\begin{example}[Exponential Catalan]\label{ex:catalan}
\begin{align}\label{E: p1 catalan}
\prod_{k=1}^{\infty}\left[\frac{(4k-1)^{3}(4k+3)}{(4k-3)(4k+1)^{3}}\right]^{k}=e^{2K/\pi}.
\end{align}
\end{example}

\begin{proof}
Theorem~\ref{thm:type2} with $i=4$ and
\[
a_{1}=a_{2}=a_{3}=-\tfrac14,\quad a_{4}=\tfrac34,\qquad
b_{1}=-\tfrac34,\quad b_{2}=b_{3}=b_{4}=\tfrac14
\]
(both moment constraints are immediate) gives
\[
\prod_{k=1}^{\infty}\left[\frac{(k-\tfrac14)^{3}(k+\tfrac34)}{(k-\tfrac34)(k+\tfrac14)^{3}}\right]^{k}
=\frac{G(3/4)^{3}\,G(7/4)}{G(1/4)\,G(5/4)^{3}}
=\frac{\Gamma(3/4)}{\Gamma(1/4)^{3}}\left(\frac{G(3/4)}{G(1/4)}\right)^{4},
\]
the last step by the functional equation. Adamchik's closed forms~\cite{article:6}
\[
G(1/4)=A^{-9/8}\,\Gamma(1/4)^{-3/4}\,e^{3/32-K/4\pi},\qquad
G(3/4)=A^{-9/8}\,\Gamma(3/4)^{-1/4}\,e^{3/32+K/4\pi}
\]
give $G(3/4)/G(1/4)=\Gamma(1/4)^{3/4}\,\Gamma(3/4)^{-1/4}\,e^{K/2\pi}$, whence the ratio collapses to $e^{2K/\pi}$.
\end{proof}

\begin{remark}[Looking ahead]\label{rem:catalan-beta}
Since $\beta'(-1)=2K/\pi$~\cite{idowu2012fundamental}, Example~\ref{ex:catalan} evaluates $e^{\beta'(-1)}$. The product~\eqref{E: p1 catalan} first appeared in the author's preprint~\cite{farrell2019wallis}. Products for the equivalent constants $e^{4K/\pi}$ and $e^{2K/\pi\pm1/2}$, in renormalised form with trailing correction factors, were given earlier by Kachi and Tzermias~\cite{kachi2012products}, in the circle of ideas of Guillera and Sondow~\cite{guillera2008double}. Part~II proves that this is the $n=1$ instance of a family of block products for $e^{\beta'(-n)}$ at every level~$n$; this single Type-2 computation is what first suggested that family.
\end{remark}

\begin{example}[A rational Type-2 value]\label{ex:type2-rational}
\begin{align}\label{E: p1 type2 rational}
\prod_{k=1}^{\infty}\left[\frac{(3k+1)(3k+5)^{2}(3k+7)}{(3k+2)(3k+4)^{2}(3k+8)}\right]^{k}=\frac{4}{5}.
\end{align}
\end{example}

\begin{proof}
Theorem~\ref{thm:type2} with $a_{1}=\tfrac13$, $a_{2}=a_{3}=\tfrac53$, $a_{4}=\tfrac73$ and $b_{1}=\tfrac23$, $b_{2}=b_{3}=\tfrac43$, $b_{4}=\tfrac83$, then repeated use of both functional equations:
\[
\frac{G(4/3)G(8/3)^{2}G(10/3)}{G(5/3)G(7/3)^{2}G(11/3)}
=\frac{\Gamma(7/3)\,G(4/3)\,G(8/3)}{\Gamma(8/3)\,G(5/3)\,G(7/3)}
=\frac{\Gamma(7/3)\,\Gamma(5/3)}{\Gamma(8/3)\,\Gamma(4/3)}
=\frac{4}{5}. \qedhere
\]
\end{proof}

The recursion $G(z+1)=\Gamma(z)G(z)$ also lets a Type-2 product be unfolded into a double product of Type-1 blocks, which is often the fastest route to a closed form.

\begin{proposition}[Double product]\label{prop:double-product}
Under the hypotheses of Theorem~\ref{thm:type2},
\begin{align}\label{E: p1 double}
\prod_{k=1}^{\infty}\left[\frac{(k+a_{1})\cdots(k+a_{i})}{(k+b_{1})\cdots(k+b_{i})}\right]^{k}
=\prod_{r=0}^{\infty}\prod_{k=1}^{\infty}\frac{(k+r+a_{1})\cdots(k+r+a_{i})}{(k+r+b_{1})\cdots(k+r+b_{i})}.
\end{align}
\end{proposition}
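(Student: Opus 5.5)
The plan is to evaluate each inner product on the right of \eqref{E: p1 double} with Theorem~\ref{thm:type1}, and then let the Barnes recursion $G(z+1)=\Gamma(z)G(z)$ telescope the resulting product of gamma ratios.

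\textbf{Inner products.} For fixed $r\ge 0$, the shifted slots $a_\ell+r$ and $b_\ell+r$ still have equal first moments. By form~\eqref{E: p1 type1 alt}, the inner product over $k$ equals
\[
\prod_{\ell}\frac{\Gamma(b_\ell+r+1)}{\Gamma(a_\ell+r+1)}.
\]
This needs $a_\ell+r, b_\ell+r\notin\mathbb{Z}_{<0}$, which holds whenever $a_\ell,b_\ell\notin\mathbb{Z}_{<0}$.

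\textbf{Telescoping.} The recursion gives $\Gamma(z+r+1)=G(z+r+2)/G(z+r+1)$. Hence the product over $0\le r\le R$ equals
\[
\frac{\prod_\ell G(a_\ell+1)}{\prod_\ell G(b_\ell+1)}\cdot\frac{\prod_\ell G(b_\ell+R+2)}{\prod_\ell G(a_\ell+R+2)}.
\]
The first factor is exactly the right side of~\eqref{E: p1 type2}, which by Theorem~\ref{thm:type2} equals the left side of~\eqref{E: p1 double}. So it remains to show the tail ratio tends to $1$ as $R\to\infty$.

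\textbf{Tail ratio (main obstacle).} Here I would use the asymptotic expansion
\[
\log G(z+1)=\tfrac{z^{2}}{2}\log z-\tfrac34 z^{2}+\tfrac{z}{2}\log 2\pi-\tfrac1{12}\log z+\zeta'(-1)+O(z^{-1}),
\]
or equivalently a Taylor expansion of $\log G(N+1+c)$ in the shift $c$ about a large integer $N$. Writing $N=R+1$, we need
\[
\sum_\ell\bigl[\log G(N+1+b_\ell)-\log G(N+1+a_\ell)\bigr]\to 0.
\]
Expanding in $c$, $\log G(N+1+c)=f_0(N)+c\,f_1(N)+\tfrac{c^{2}}{2}f_2(N)+O(c^{3}\,\cdot\,\text{bounded})$. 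The leading data are $f_1(N)\sim N\log N$ and $f_2(N)=\psi$-type terms $\sim\log N$. The third derivative is $(\log G)'''(z)=O(1/z)$, since $(\log G)''(z+1)\sim\log z$.

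\textbf{Where the moment conditions enter.} The equal first and second moments kill the $f_1$ and $f_2$ terms exactly. The remainder is then $O(1/N)\sum_\ell(|a_\ell|^{3}+|b_\ell|^{3})$, which tends to $0$.

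To make this rigorous without quoting the asymptotic expansion, I would instead use the Weierstrass form~\eqref{E: p1 barnes weier} to bound $(\log G)'''$ directly. Alternatively, I could do the telescoping in reverse: truncate the Type-2 product at $K$, write $k=\sum_{r=0}^{k-1}1$ to swap the order into a double product over $r<k\le K$, and control the truncation errors. In that route the exchange of limits is the delicate point, and the same second-moment cancellation handles it.
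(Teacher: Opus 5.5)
Your proposal is correct, but it takes a genuinely different and longer route than the paper. The paper argues by pure rearrangement. Write $B(m)$ for the block at index $m$. The factor $B(m)$ occurs in the double product once for each pair $(r,k)$ with $r+k=m$, i.e.\ exactly $m$ times. The two moment conditions give $\log B(m)=O(m^{-3})$, so $\sum_{m}m\,|\log B(m)|$ converges and the double product may be reordered freely. No gamma or $G$-function input is used.

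You instead evaluate both sides in closed form. The inner products come from Theorem~\ref{thm:type1}, telescoped through $G(z+1)=\Gamma(z)G(z)$, and the left side comes from Theorem~\ref{thm:type2}. This reduces the identity to showing that the tail ratio $\prod_{\ell}G(b_{\ell}+N+1)/G(a_{\ell}+N+1)$ tends to $1$.

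Your Taylor argument for the tail is sound. The zeroth-, first- and second-order terms cancel exactly, by equal slot counts and the two moment conditions, so the growth of $f_{1}$ and $f_{2}$ is irrelevant. The third-derivative bound follows cleanly from \eqref{E: p1 barnes weier}, which gives $\frac{d^{3}}{dz^{3}}\log G(z+1)=\sum_{k\ge 1}2k/(k+z)^{3}$. Its modulus is at most $2/\Re z$, uniformly for bounded complex shifts.

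Your route buys an explicit description of the partial outer products as $G$-ratios. The paper's route buys brevity and independence from Theorems~\ref{thm:type1} and~\ref{thm:type2}.

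The two routes are closer than they look. Apply Theorem~\ref{thm:type2} to the slots shifted by $N$, which still match first and second moments. This shows your tail ratio equals $\prod_{k\ge 1}B(k+N)^{-k}$. Its logarithm has modulus at most $\sum_{m>N}m\,|\log B(m)|$, which tends to $0$. That is exactly the absolute-convergence estimate behind the paper's rearrangement, so you could bypass the $G$-asymptotics entirely.
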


\begin{proof}
The factor with index $k$ appears with exponent $1$ in each inner product with $r<k$, i.e.\ $k$ times in total; commuting the absolutely convergent double product gives~\eqref{E: p1 double}.
\end{proof}

\begin{example}[$\sqrt{2}$ as a Type-2 product]\label{ex:root2-type2}
\begin{align}\label{E: p1 root2}
\prod_{k=1}^{\infty}\left[\frac{(4k-2)^{2}(4k+1)(4k+3)}{(4k-3)(4k-1)(4k+2)^{2}}\right]^{k}=\sqrt{2}.
\end{align}
\end{example}

\begin{proof}
With $a_{1}=a_{2}=-\tfrac12$, $a_{3}=\tfrac14$, $a_{4}=\tfrac34$ and $b_{1}=-\tfrac34$, $b_{2}=-\tfrac14$, $b_{3}=b_{4}=\tfrac12$, Proposition~\ref{prop:double-product} and Theorem~\ref{thm:type1} evaluate the inner products:
\begin{align*}
\prod_{r=0}^{\infty}\prod_{k=1}^{\infty}\frac{(k+r-\tfrac12)^{2}(k+r+\tfrac14)(k+r+\tfrac34)}{(k+r-\tfrac34)(k+r-\tfrac14)(k+r+\tfrac12)^{2}}
&=\prod_{r=0}^{\infty}\frac{\Gamma(r+\tfrac14)\,\Gamma(r+\tfrac34)\,\Gamma(r+\tfrac32)^{2}}{\Gamma(r+\tfrac12)^{2}\,\Gamma(r+\tfrac54)\,\Gamma(r+\tfrac74)}\\
&=\prod_{r=0}^{\infty}\frac{(4r+2)^{2}}{(4r+1)(4r+3)},
\end{align*}
using the gamma functional equation to cancel; the last product is Catalan's product for $\sqrt{2}$ (Proposition~\ref{prop:nested-radical} at $m=1$).
\end{proof}

Because $\Gamma(z)=G(z+1)/G(z)$, every Type-1 evaluation can be re-expressed through the $G$-function, and in fact every Type-1 product has a Type-2 \emph{analogue}: a genuinely different product, with exponent~$k$, converging to the same constant.

\begin{proposition}[Type-1 to Type-2 analogue finder]\label{prop:gamma-barnes}
If $a_{\ell},b_{\ell}\in\mathbb{C}\setminus\mathbb{Z}_{<0}$ and $\sum_{\ell}a_{\ell}=\sum_{\ell}b_{\ell}$, then
\begin{align}\label{E: p1 analogue2}
\prod_{k=1}^{\infty}\frac{(k+a_{1})\cdots(k+a_{i})}{(k+b_{1})\cdots(k+b_{i})}
=\prod_{k=1}^{\infty}\left[\frac{(k+a_{1})\cdots(k+a_{i})\,(k+b_{1}+1)\cdots(k+b_{i}+1)}{(k+a_{1}+1)\cdots(k+a_{i}+1)\,(k+b_{1})\cdots(k+b_{i})}\right]^{k}.
\end{align}
\end{proposition}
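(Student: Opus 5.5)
The plan is to apply Theorem~\ref{thm:type2} to the right-hand side of~\eqref{E: p1 analogue2} with $2i$ slots on each side, and then reduce the resulting $G$-values to gamma values via $G(z+1)=\Gamma(z)G(z)$. Concretely, take top slots $a_{1},\dots,a_{i},b_{1}+1,\dots,b_{i}+1$ and bottom slots $a_{1}+1,\dots,a_{i}+1,b_{1},\dots,b_{i}$. All of these lie in $\mathbb{C}\setminus\mathbb{Z}_{<0}$, because $a_{\ell},b_{\ell}$ do and adding $1$ cannot produce a negative integer from a non-negative-integer-free point. The only slightly delicate step is checking the two moment conditions.

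For the first moment, the top total is $\sum a_{\ell}+\sum b_{\ell}+i$ and the bottom total is $\sum a_{\ell}+i+\sum b_{\ell}$; these agree with no hypothesis needed. For the second moment, the top total is $\sum a_{\ell}^{2}+\sum b_{\ell}^{2}+2\sum b_{\ell}+i$ and the bottom total is $\sum a_{\ell}^{2}+2\sum a_{\ell}+i+\sum b_{\ell}^{2}$. Their difference is $2(\sum b_{\ell}-\sum a_{\ell})$, which vanishes exactly by the Type-1 hypothesis. So the single first-moment constraint of the Type-1 product is precisely what makes the doubled block satisfy both Type-2 constraints. This is the key observation, and it explains why the construction is a genuine analogue finder.

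Theorem~\ref{thm:type2} then gives the right-hand side as
\[
\prod_{\ell=1}^{i}\frac{G(a_{\ell}+1)\,G(b_{\ell}+2)}{G(a_{\ell}+2)\,G(b_{\ell}+1)}
=\prod_{\ell=1}^{i}\frac{\Gamma(b_{\ell}+1)}{\Gamma(a_{\ell}+1)},
\]
using $G(z+2)/G(z+1)=\Gamma(z+1)$. By form~\eqref{E: p1 type1 alt} of Theorem~\ref{thm:type1}, whose hypotheses are exactly those of the proposition, this is the left-hand side.

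I expect no real obstacle. The one point to watch is the domain bookkeeping: $G$ is entire, and $\Gamma(a_{\ell}+1)$ and $\Gamma(b_{\ell}+1)$ are finite and nonzero since $a_{\ell}+1,b_{\ell}+1\notin\mathbb{Z}_{\le 0}$. Hence none of the ratios degenerate and both products converge, so equating them is legitimate.
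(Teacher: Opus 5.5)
Your proposal is correct and is essentially the paper's proof: the same slot families $\{a_\ell\}\cup\{b_\ell+1\}$ versus $\{a_\ell+1\}\cup\{b_\ell\}$, the same second-moment defect $2\bigl(\sum_\ell b_\ell-\sum_\ell a_\ell\bigr)=0$, and the same collapse of the $G$-ratio to $\prod_\ell\Gamma(b_\ell+1)/\Gamma(a_\ell+1)$ via $G(z+1)=\Gamma(z)G(z)$, followed by Theorem~\ref{thm:type1}. One wording slip: the shifted slots avoid $\mathbb{Z}_{<0}$ because $a_\ell,b_\ell$ contain no \emph{negative} integers, not because they contain no non-negative ones.
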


\begin{proof}
The right side is a Type-2 product with slot families $A'=\{a_{\ell}\}\cup\{b_{\ell}+1\}$ and $B'=\{a_{\ell}+1\}\cup\{b_{\ell}\}$. First moments agree because $\sum a_{\ell}+\sum(b_{\ell}+1)=\sum b_{\ell}+\sum(a_{\ell}+1)$; second moments agree because
\[
\sum_{\ell}a_{\ell}^{2}+\sum_{\ell}(b_{\ell}+1)^{2}
-\sum_{\ell}b_{\ell}^{2}-\sum_{\ell}(a_{\ell}+1)^{2}
=2\Big(\sum_{\ell}b_{\ell}-\sum_{\ell}a_{\ell}\Big)=0.
\]
By Theorem~\ref{thm:type2} and $\Gamma(z)=G(z+1)/G(z)$, its value is
\[
\prod_{\ell=1}^{i}\frac{G(a_{\ell}+1)\,G(b_{\ell}+2)}{G(a_{\ell}+2)\,G(b_{\ell}+1)}
=\prod_{\ell=1}^{i}\frac{\Gamma(b_{\ell}+1)}{\Gamma(a_{\ell}+1)},
\]
which is the Type-1 value on the left of~\eqref{E: p1 analogue2} by Theorem~\ref{thm:type1}.
\end{proof}

\begin{example}[Type-2 Wallis product]\label{ex:wallis-type2}
\begin{align}\label{E: p1 wallis2}
\prod_{k=1}^{\infty}\left[\frac{(2k)^{2}(2k+3)}{(2k-1)(2k+2)^{2}}\right]^{k}=\frac{\pi}{2}.
\end{align}
\end{example}

\begin{proof}
Proposition~\ref{prop:gamma-barnes} with the Wallis constants $a_{1}=a_{2}=0$, $b_{1}=-\tfrac12$, $b_{2}=\tfrac12$ of Example~\ref{ex:wallis}: after cancelling the common slot $\tfrac12$, the right side of~\eqref{E: p1 analogue2} is exactly the displayed block.
\end{proof}

Type-2 products already reach constants of every flavour of irrationality: Example~\ref{ex:type2-rational} is rational, Example~\ref{ex:root2-type2} algebraic irrational, Example~\ref{ex:wallis-type2} transcendental, and the arithmetic nature of $e^{2K/\pi}$ in Example~\ref{ex:catalan} is unknown. The reverse of Proposition~\ref{prop:gamma-barnes} fails in general: unfolding Example~\ref{ex:catalan} by Proposition~\ref{prop:double-product} leaves the residual product $\prod_{r\geq 0}\Gamma(r+\tfrac14)\Gamma(r+\tfrac54)^{3}/(\Gamma(r+\tfrac34)^{3}\Gamma(r+\tfrac74))$, whose partial quotients do not telescope to gamma values; some Type-2 values appear to be genuinely inaccessible at Type-1.

\section{Type-$N$ products and analogue lifting}\label{sec:products-typeN}

\begin{definition}[Multiple gamma function]\label{def:multiple-gamma}
Vign\'eras' multiple gamma functions $\Gamma_{n}$ are determined by the functional equation and normalisation
\begin{align}\label{E: p1 mgf fe}
\Gamma_{n+1}(z+1)=\frac{\Gamma_{n+1}(z)}{\Gamma_{n}(z)},\qquad
\Gamma_{n}(1)=1,\qquad \Gamma_{1}=\Gamma,
\end{align}
together with a log-convexity condition~\cite{ARTICLE:2}. Equivalently, $\log\Gamma_{n+1}(z)=\log\Gamma_{n+1}(z-1)-\log\Gamma_{n}(z-1)$ (the shift law used in Part~II). In particular $G(z)=\Gamma_{2}(z)^{-1}$. Some references use Barnes' generalised $G$-functions $G_{n}$ instead; the translation is $\Gamma_{n}(z)=G_{n}(z)^{(-1)^{n+1}}$ and nothing below depends on the choice.
\end{definition}

The pattern of Theorems~\ref{thm:type1} and~\ref{thm:type2} continues: matching power sums through order~$n$ buys the binomial exponent $\binom{n+k-2}{n-1}$, a polynomial in~$k$ of degree $n-1$, and the limit is a $\Gamma_{n}$-ratio. Part~(ii) below is the \emph{shifted} form, which starts the product at $k=0$ and trades one extra matching power sum for the next binomial exponent; it is the form used for the beta products of Part~II.

\begin{theorem}[Type-$N$ product]\label{thm:typeN}
Let $n\in\mathbb{N}$ and let $(a_{\ell})_{\ell=1}^{i}$, $(b_{\ell})_{\ell=1}^{i}$ be finite slot sequences.
\begin{enumerate}
\item[(i)] If $n\geq 1$, $a_{\ell},b_{\ell}\notin\mathbb{Z}_{<0}$, and $\sum_{\ell}a_{\ell}^{j}=\sum_{\ell}b_{\ell}^{j}$ for $j=1,\ldots,n$, then
\begin{align}\label{E: typeN product}
\prod_{k=1}^{\infty}\left[\frac{(k+a_{1})\cdots(k+a_{i})}{(k+b_{1})\cdots(k+b_{i})}\right]^{\binom{n+k-2}{n-1}}
=\frac{\Gamma_{n}(b_{1}+1)\cdots\Gamma_{n}(b_{i}+1)}{\Gamma_{n}(a_{1}+1)\cdots\Gamma_{n}(a_{i}+1)}.
\end{align}
\item[(ii)] If $n\geq 0$, $a_{\ell},b_{\ell}\notin\mathbb{Z}_{\leq 0}$, and $\sum_{\ell}a_{\ell}^{j}=\sum_{\ell}b_{\ell}^{j}$ for $j=1,\ldots,n+1$, then
\begin{align}\label{E: typeN shift}
\prod_{k=0}^{\infty}\left[\frac{(k+a_{1})\cdots(k+a_{i})}{(k+b_{1})\cdots(k+b_{i})}\right]^{\binom{n+k}{n}}
=\frac{\Gamma_{n+1}(b_{1})\cdots\Gamma_{n+1}(b_{i})}{\Gamma_{n+1}(a_{1})\cdots\Gamma_{n+1}(a_{i})}.
\end{align}
\end{enumerate}
Products of the form~\eqref{E: typeN product} are called \emph{Type-$N$ products} (with $N=n$).
\end{theorem}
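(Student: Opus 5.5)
Both parts come from one statement: a product-form (Weierstrass-type) expression for $\log\Gamma_{n}(z+1)$. I would prove, by induction on $n$, that
\[
\log\Gamma_{n}(z+1)=\sum_{j=1}^{n}c_{n,j}z^{j}-\sum_{k=1}^{\infty}\binom{n+k-2}{n-1}\Big[\log\Big(1+\frac{z}{k}\Big)-\sum_{j=1}^{n}\frac{(-1)^{j-1}}{j}\frac{z^{j}}{k^{j}}\Big]
\]
for suitable constants $c_{n,j}$. The convergence factor strips off exactly the Taylor terms up to degree $n$, and these are the terms that keep the sum from converging once the weight grows like $k^{n-1}$. The cases $n=1,2$ are the factorisations~\eqref{E: p1 gamma weier} and~\eqref{E: p1 barnes weier}, after the sign convention $G=\Gamma_{2}^{-1}$ and some regrouping of the $k$-dependent polynomial corrections. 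For the inductive step I would define $\Phi_{n}(z)$ as the right-hand side and check three things. First, $\Phi_{n}$ satisfies the functional equation $\Phi_{n+1}(z+1)-\Phi_{n+1}(z)=-\Phi_{n}(z)$, using the Pascal identity $\binom{n+k-1}{n}-\binom{n+k-2}{n}=\binom{n+k-2}{n-1}$ on the telescoped sums. Second, $\Phi_{n}$ vanishes at $z=0$. Third, $\Phi_{n}$ is convex for large real $z$. Vign\'eras' uniqueness then forces $\Phi_{n}=\log\Gamma_{n}(\cdot+1)$. I expect this identification to be the main obstacle: the polynomial part of the functional-equation comparison leaves a residual polynomial in $z$, and the $c_{n,j}$ must be chosen to kill it. I would not compute them, since only their existence matters; once the $k$-series part of the functional equation matches, a polynomial discrepancy can always be absorbed by choosing the $c_{n,j}$ recursively. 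Alternatively, I could cite this as the standard Weierstrass product of $\Gamma_{n}$ from~\cite{ARTICLE:2}.

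For part~(i), I would sum this representation over the slots with signs $+$ for the $b_{\ell}$ and $-$ for the $a_{\ell}$. By hypothesis every power-sum difference $\sum_{\ell}b_{\ell}^{j}-\sum_{\ell}a_{\ell}^{j}$ vanishes for $j\le n$. So the polynomial prefactors $\sum_{j}c_{n,j}(\cdot)^{j}$ cancel, and so do the correction terms $z^{j}/k^{j}$ inside each $k$-summand. Each summand is then just $\binom{n+k-2}{n-1}\log$ of the block. Exponentiating gives~\eqref{E: typeN product}. The grouping by $k$ is legitimate because each individual series converges absolutely; equivalently, the grouped summand is $O(k^{n-1}\cdot k^{-n-1})$ by the moment cancellation, which makes the sum absolutely convergent.

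For part~(ii), I would apply (i) with $n+1$ in place of $n$ and slots $a_{\ell}-1,b_{\ell}-1$. The power sums of the shifted slots agree through order $n+1$, because they are linear combinations of the original power sums. Since $a_{\ell},b_{\ell}\notin\mathbb{Z}_{\le0}$, the shifted slots avoid $\mathbb{Z}_{<0}$. The exponent becomes $\binom{n+k-1}{n}$ for blocks $(k-1+a_{\ell})/(k-1+b_{\ell})$, and reindexing $k\mapsto k+1$ gives~\eqref{E: typeN shift}, with right-hand side $\prod\Gamma_{n+1}(b_{\ell})/\prod\Gamma_{n+1}(a_{\ell})$. The case $n=0$ is Theorem~\ref{thm:type1}, so the edge case is consistent.
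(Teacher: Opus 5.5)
Once the Weierstrass form is granted, your argument is the paper's: multiply Choi's factorisations over the slots, let the power-sum hypotheses remove both the $z^{j}/k^{j}$ corrections and the prefactor (degree $\le n$, no constant term), and deduce (ii) from (i) at level $n+1$ with slots shifted by $-1$ and $k\mapsto k+1$. If you cite Choi, as the paper does, the proof is complete.

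The self-contained derivation you list first would not go through as written, because its uniqueness criterion is wrong for $n\ge 2$. Vign\'eras' characterisation uses a sign condition on the $(n+1)$-st derivative, not convexity. In your normalisation the condition reads $(-1)^{n+1}\Phi_{n}^{(n+1)}\ge 0$.

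Convexity for large $z$ fails for even $n$: for instance $\Phi_{2}(z)=-\log G(z+1)$ has $\Phi_{2}''(z)\to-\infty$. For odd $n\ge 3$ convexity holds but cannot force uniqueness. There $\Phi_{n}''\to+\infty$, so adding $\varepsilon(1-\cos 2\pi z)$ preserves the functional equation, the value at $0$, and eventual convexity.

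The correct condition follows immediately from your series, since $\Phi_{n}^{(n+1)}(z)=(-1)^{n+1}n!\sum_{k\ge 1}\binom{n+k-2}{n-1}(k+z)^{-n-1}$.

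Separately, the step \emph{a polynomial discrepancy can always be absorbed} skips the one fact part (i) depends on. The discrepancy of the series parts in the level-$(n+1)$ functional equation must have degree $\le n$. Only then does the new prefactor have degree $\le n+1$, so that it still cancels under matching through order $n+1$. This is true: the $(n+1)$-st derivative of the discrepancy vanishes by the same Pascal summation by parts, with boundary term $\binom{n+K-1}{n}(K+1+z)^{-n-1}\to 0$. But it has to be checked, not assumed.
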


\begin{proof}
(i) Choi's Weierstrass canonical form~\cite{ARTICLE:2} reads, for $z\notin\mathbb{Z}_{<0}$,
\begin{align}\label{E: p1 choi weier}
\Gamma_{n}(z+1)=e^{Q_{n}(z)}\prod_{k=1}^{\infty}\left\{\Big(1+\frac{z}{k}\Big)^{-\binom{n+k-2}{n-1}}
\exp\left[\binom{n+k-2}{n-1}\sum_{j=1}^{n}\frac{(-1)^{j-1}}{j}\frac{z^{j}}{k^{j}}\right]\right\},
\end{align}
where $Q_{n}$ is a polynomial of degree at most~$n$ with $Q_{n}(0)=0$ (it is built from the Bernoulli-type polynomials $p_{m}(z)=\frac{1}{m+1}\sum_{t=1}^{m+1}\binom{m+1}{t}B_{m+1-t}z^{t}$ with $m\leq n-1$, each of which has no constant term; see~\cite{ARTICLE:2} for the explicit expression). Multiplying the factorisations and grouping factors of equal index~$k$,
\begin{multline*}
\frac{\Gamma_{n}(b_{1}+1)\cdots\Gamma_{n}(b_{i}+1)}{\Gamma_{n}(a_{1}+1)\cdots\Gamma_{n}(a_{i}+1)}
=e^{S}\prod_{k=1}^{\infty}\Biggl\{\left[\frac{(k+a_{1})\cdots(k+a_{i})}{(k+b_{1})\cdots(k+b_{i})}\right]^{\binom{n+k-2}{n-1}}\\
\times\exp\Biggl[\binom{n+k-2}{n-1}\sum_{j=1}^{n}T_{j,k}\Biggr]\Biggr\},
\end{multline*}
with
\begin{align*}
T_{j,k}&=\frac{(-1)^{j-1}}{j\,k^{j}}\Big(\sum_{\ell}b_{\ell}^{j}-\sum_{\ell}a_{\ell}^{j}\Big),
\\
S&=\sum_{\ell}Q_{n}(b_{\ell})-\sum_{\ell}Q_{n}(a_{\ell}).
\end{align*}
Under the hypotheses every $T_{j,k}$ vanishes; and since $Q_{n}$ is a polynomial of degree $\leq n$ with no constant term, $S$ is a linear combination of the differences $\sum b_{\ell}^{j}-\sum a_{\ell}^{j}$ for $1\leq j\leq n$, hence $S=0$. All exponential corrections disappear, giving~\eqref{E: typeN product}.

(ii) Reindex $k\mapsto k+1$ in~\eqref{E: typeN shift}: the left side becomes
\[
\prod_{k=1}^{\infty}\left[\frac{(k+(a_{1}-1))\cdots(k+(a_{i}-1))}{(k+(b_{1}-1))\cdots(k+(b_{i}-1))}\right]^{\binom{(n+1)+k-2}{(n+1)-1}},
\]
a Type-$(n+1)$ product with slots $a_{\ell}-1$, $b_{\ell}-1\notin\mathbb{Z}_{<0}$. By the binomial theorem, $\sum_{\ell}(a_{\ell}-1)^{j}=\sum_{\ell}(b_{\ell}-1)^{j}$ for $j=1,\ldots,n+1$ follows from the hypotheses. Part~(i) at level $n+1$ evaluates the product as $\prod_{\ell}\Gamma_{n+1}(b_{\ell}-1+1)/\Gamma_{n+1}(a_{\ell}-1+1)$, which is~\eqref{E: typeN shift}. At $n=0$, part~(ii) is Theorem~\ref{thm:type1}, form~\eqref{E: p1 type1}.
\end{proof}

\begin{remark}[Signed slot families]\label{rem:signed}
We often apply Theorem~\ref{thm:typeN} to slot families carrying \emph{signed multiplicities}: a slot with multiplicity $-1$ on the $a$-side is simply a slot with multiplicity $+1$ on the $b$-side, so nothing new is being asserted; only the bookkeeping changes. Power-sum constraints are then read with signs.
\end{remark}

Finding slot constants that match power sums through order~$n$ is exactly the Prouhet--Tarry--Escott problem~\cite{article:16}, so ideal solutions become scarce as $n$ grows, and closed forms for $\Gamma_{n}$ with $n\geq 3$ are scarcer still (already the duplication formula for $G$~\cite{ARTICLE:1} has no tidy analogue one level up). The way around both problems is systematic: \emph{lift} a known low-type evaluation to any higher type. The next lemma provides the required power sums; it is a standard fact in the Prouhet--Tarry--Escott circle of ideas, proved here for completeness.

\begin{lemma}[Binomial power-sum criteria]\label{lem:pte}
Let $n\in\mathbb{N}_{+}$ and let $A=\{a_{1},\ldots,a_{i}\}$, $B=\{b_{1},\ldots,b_{i}\}$ be multisets with $\sum_{a\in A}a=\sum_{b\in B}b$. Then for every $m\in\{1,\ldots,n+1\}$,
\begin{align}\label{E: p1 pte}
\sum_{r=0}^{n}\binom{n}{r}(-1)^{r}\sum_{a\in A}(a+r)^{m}
=\sum_{r=0}^{n}\binom{n}{r}(-1)^{r}\sum_{b\in B}(b+r)^{m}.
\end{align}
\end{lemma}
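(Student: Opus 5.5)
The plan is to treat the left side of~\eqref{E: p1 pte} as an $n$-th finite difference and use the standard fact that the $n$-th forward difference of a polynomial of degree $d$ vanishes for $d<n$ and is a constant for $d=n$. Define, for a single complex number $x$,
\[
D_{m}(x):=\sum_{r=0}^{n}\binom{n}{r}(-1)^{r}(x+r)^{m}=(-1)^{n}\big(\Delta^{n}t^{m}\big)\big|_{t=x},
\]
where $\Delta f(t)=f(t+1)-f(t)$. Then~\eqref{E: p1 pte} reads $\sum_{a\in A}D_{m}(a)=\sum_{b\in B}D_{m}(b)$, and it suffices to show that $D_{m}$ is a polynomial in $x$ of degree at most $1$. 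The linear coefficient would then be handled by the first-moment hypothesis and the constant coefficient by $|A|=|B|=i$.

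First I will expand $(x+r)^{m}=\sum_{s=0}^{m}\binom{m}{s}x^{m-s}r^{s}$, giving
\[
D_{m}(x)=\sum_{s=0}^{m}\binom{m}{s}x^{m-s}\,c_{s},\qquad c_{s}:=\sum_{r=0}^{n}\binom{n}{r}(-1)^{r}r^{s}.
\]
The key identity is $c_{s}=0$ for $0\le s\le n-1$. This follows because $c_{s}=(-1)^{n}\Delta^{n}[t^{s}]_{t=0}$ and $\Delta$ lowers degree by one, or equivalently by applying $(x\,\tfrac{d}{dx})^{s}$ to $(1-x)^{n}$ and setting $x=1$. Hence only $s\in\{n,n+1\}$ survive when $m\le n+1$. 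These terms carry the powers $x^{m-n}$ and $x^{m-n-1}$, and both exponents are at most $1$ since $m\le n+1$.

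Concretely, $D_{m}(x)\equiv 0$ for $m<n$, $D_{n}(x)=c_{n}$ is constant, and $D_{n+1}(x)=(n+1)c_{n}\,x+c_{n+1}$. Summing over each multiset, the cases $m<n$ give $0=0$. The case $m=n$ gives $i\,c_{n}$ on both sides. The case $m=n+1$ gives $(n+1)c_{n}\sum a+i\,c_{n+1}$ against $(n+1)c_{n}\sum b+i\,c_{n+1}$, and these agree by the hypothesis $\sum a=\sum b$. Here I am using that $A$ and $B$ have the same cardinality $i$, as the notation in the statement indicates. With signed multiplicities (Remark~\ref{rem:signed}) one would need total multiplicity zero on each side in place of this, and I would note that.

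The only step needing care is the vanishing of $c_{s}$ for $s<n$. It is classical, so I do not expect a real obstacle. I will prove it by induction on $n$ via $\Delta^{n}=\Delta\circ\Delta^{n-1}$, so as not to rely on Stirling numbers. I do not need the explicit values $c_{n}=(-1)^{n}n!$ and $c_{n+1}$, only that they are independent of $x$.
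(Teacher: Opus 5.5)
Your proposal is correct and follows essentially the same route as the paper. Both expand $(a+r)^{m}$ binomially and use the classical vanishing $\sum_{r}\binom{n}{r}(-1)^{r}r^{s}=0$ for $s<n$, so only the $s=n,n+1$ terms survive; these agree on $A$ and $B$ by equal cardinality and the first-moment hypothesis. Your framing of $D_{m}$ as a polynomial of degree at most $1$ in $x$, and your self-contained proof of the vanishing in place of the paper's citation of Euler, are only cosmetic differences.
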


\begin{proof}
Write $S_{r}:=\sum_{t=0}^{n}\binom{n}{t}(-1)^{t}t^{r}$. Euler showed $S_{r}=0$ for $0\leq r<n$ and $S_{n}=(-1)^{n}n!$~\cite{euler1755institutiones,gould1978euler}. Expanding by the binomial theorem and exchanging sums,
\[
\sum_{r=0}^{n}\binom{n}{r}(-1)^{r}\sum_{a\in A}(a+r)^{m}
=\sum_{a\in A}\sum_{t=0}^{m}\binom{m}{t}a^{m-t}\,S_{t}.
\]
For $m<n$ every $S_{t}$ with $t\leq m$ vanishes and both sides of~\eqref{E: p1 pte} are zero. For $m=n$ only $t=n$ survives, leaving $i\,S_{n}$, the same for $A$ and $B$. For $m=n+1$ the terms $t=n$ and $t=n+1$ survive, leaving
\[
(n+1)\,S_{n}\sum_{a\in A}a\;+\;i\,S_{n+1},
\]
which agrees for $A$ and $B$ precisely because $\sum_{a\in A}a=\sum_{b\in B}b$.
\end{proof}

\begin{theorem}[Type-$N$ analogue finder]\label{thm:analogue-finder}
Let $n\in\mathbb{N}_{+}$ and let $A=\{a_{1},\ldots,a_{i}\}$, $B=\{b_{1},\ldots,b_{i}\}\subset\mathbb{C}\setminus\mathbb{Z}_{<0}$ satisfy $\sum_{a\in A}a=\sum_{b\in B}b$. Then
\begin{align}\label{E: p1 analogueN}
&\prod_{k=1}^{\infty}\prod_{r=0}^{n-1}\prod_{\ell=1}^{i}
\left(\frac{k+a_{\ell}+r}{k+b_{\ell}+r}\right)^{(-1)^{r}\binom{n-1}{r}\binom{n+k-2}{n-1}}
\nonumber\\
&\qquad=\frac{\Gamma(b_{1}+1)\cdots\Gamma(b_{i}+1)}{\Gamma(a_{1}+1)\cdots\Gamma(a_{i}+1)}
=\prod_{k=1}^{\infty}\frac{(k+a_{1})\cdots(k+a_{i})}{(k+b_{1})\cdots(k+b_{i})}.
\end{align}
Thus every Type-1 value acquires a Type-$n$ product for every $n$.
\end{theorem}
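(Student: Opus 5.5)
The plan is to read the left side of~\eqref{E: p1 analogueN} as a single Type-$n$ product in the sense of Theorem~\ref{thm:typeN}(i). By Remark~\ref{rem:signed}, it has signed slot families
\[
A'=\biguplus_{r=0}^{n-1}(-1)^{r}\binom{n-1}{r}\,(A+r),\qquad
B'=\biguplus_{r=0}^{n-1}(-1)^{r}\binom{n-1}{r}\,(B+r).
\]
Here the multiplicity-$(-1)^r\binom{n-1}{r}$ copies of $a_\ell+r$ go to the numerator side and those of $b_\ell+r$ to the denominator side, with negative multiplicities moved across. The block exponent is $\binom{n+k-2}{n-1}$, as Theorem~\ref{thm:typeN}(i) requires. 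Every slot $a_\ell+r$, $b_\ell+r$ avoids $\mathbb{Z}_{<0}$ because $a_\ell,b_\ell$ do and $r\ge0$.

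The hypotheses to check are the signed power-sum identities
\[
\sum_{r=0}^{n-1}\binom{n-1}{r}(-1)^r\sum_{a\in A}(a+r)^m=\sum_{r=0}^{n-1}\binom{n-1}{r}(-1)^r\sum_{b\in B}(b+r)^m,\qquad m=1,\dots,n.
\]
These are exactly Lemma~\ref{lem:pte} with $n$ replaced by $n-1$, which covers $m\le n$ using only $\sum a=\sum b$. The degenerate case $n=1$ is just Theorem~\ref{thm:type1} and needs no lemma. So Theorem~\ref{thm:typeN}(i) applies, and the product equals
\[
\prod_{\ell=1}^{i}\prod_{r=0}^{n-1}\left(\frac{\Gamma_n(b_\ell+r+1)}{\Gamma_n(a_\ell+r+1)}\right)^{(-1)^r\binom{n-1}{r}}.
\]

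The remaining step, which I expect to be the only real work, is to collapse this to a gamma ratio. I would prove by induction on $n$ that for every $z$,
\[
\prod_{r=0}^{n-1}\Gamma_n(z+r)^{(-1)^r\binom{n-1}{r}}=\Gamma(z)^{(-1)^{n-1}}
\]
(up to harmless sign conventions, to be confirmed), and then apply it at $z=b_\ell+1$ and $z=a_\ell+1$.

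For the induction, the functional equation~\eqref{E: p1 mgf fe} gives $\Gamma_n(z)/\Gamma_n(z+1)=\Gamma_{n-1}(z)$. This means the operator $f(z)\mapsto f(z)-f(z+1)$ applied to $\log\Gamma_n$ yields $\log\Gamma_{n-1}$. The alternating binomial sum $\sum_r(-1)^r\binom{n-1}{r}\log\Gamma_n(z+r)$ is the $(n-1)$-fold iterate of this operator, so it equals $\log\Gamma_1(z)=\log\Gamma(z)$. I expect no sign flips at all, but I will verify the orientation carefully against the convention $G=\Gamma_2^{-1}$. As a cross-check, at $n=2$ the identity must reproduce Proposition~\ref{prop:gamma-barnes}.

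The product is therefore $\prod_\ell\Gamma(b_\ell+1)/\Gamma(a_\ell+1)$, and Theorem~\ref{thm:type1}, form~\eqref{E: p1 type1 alt}, identifies this with the Type-1 product, completing~\eqref{E: p1 analogueN}.
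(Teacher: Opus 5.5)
Your proposal is correct and follows the paper's proof essentially step for step: read the left side as a signed slot family (Remark~\ref{rem:signed}), use Lemma~\ref{lem:pte} with $n-1$ in place of $n$ to verify the hypotheses of Theorem~\ref{thm:typeN}(i), then collapse with $\Gamma(z)=\prod_{r=0}^{n-1}\Gamma_{n}(z+r)^{(-1)^{r}\binom{n-1}{r}}$, obtained by iterating the functional equation (your $(n-1)$-fold difference operator is the paper's Pascal-rule induction). Your own operator argument already shows there is no $(-1)^{n-1}$ sign, so that hedge can be dropped.
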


\begin{proof}
Iterating $\Gamma_{m}(z)=\Gamma_{m+1}(z)/\Gamma_{m+1}(z+1)$ (which is~\eqref{E: p1 mgf fe} rearranged) gives, by induction on $n\geq 1$,
\begin{align}\label{E: p1 telescope}
\Gamma(z)=\prod_{r=0}^{n-1}\Gamma_{n}(z+r)^{(-1)^{r}\binom{n-1}{r}},
\end{align}
the induction step being Pascal's rule on the exponents after substituting $\Gamma_{n}(z+r)=\Gamma_{n+1}(z+r)/\Gamma_{n+1}(z+r+1)$.

Now consider the signed slot family $\{a_{\ell}+r\}$ versus $\{b_{\ell}+r\}$, each slot weighted by $(-1)^{r}\binom{n-1}{r}$ (Remark~\ref{rem:signed}). Lemma~\ref{lem:pte}, applied with $n-1$ in place of~$n$, shows that the signed power sums of the two families agree for $j=1,\ldots,n$, which is the hypothesis of Theorem~\ref{thm:typeN}(i) at level~$n$. Hence the left side of~\eqref{E: p1 analogueN} equals
\[
\prod_{r=0}^{n-1}\prod_{\ell=1}^{i}
\left(\frac{\Gamma_{n}(b_{\ell}+r+1)}{\Gamma_{n}(a_{\ell}+r+1)}\right)^{(-1)^{r}\binom{n-1}{r}}
=\prod_{\ell=1}^{i}\frac{\Gamma(b_{\ell}+1)}{\Gamma(a_{\ell}+1)},
\]
by~\eqref{E: p1 telescope} at $z=b_{\ell}+1$ and $z=a_{\ell}+1$. The second equality in~\eqref{E: p1 analogueN} is Theorem~\ref{thm:type1}.
\end{proof}

\begin{example}[Wallis at every level]\label{ex:typeN-wallis}
Take $A=\{0,0\}$, $B=\{-\tfrac12,\tfrac12\}$ in Theorem~\ref{thm:analogue-finder}:
\begin{align}\label{E: p1 wallis every level}
\prod_{k=1}^{\infty}\prod_{r=0}^{n-1}\left(\frac{(2k+2r)^{2}}{(2k+2r-1)(2k+2r+1)}\right)^{(-1)^{r}\binom{n-1}{r}\binom{n+k-2}{n-1}}=\frac{\pi}{2}
\qquad\text{for every }n\geq 1.
\end{align}
Collecting the inner factors into a single block, the first three levels read
\begin{align*}
n=1&:\quad \prod_{k=1}^{\infty}\frac{(2k)^{2}}{(2k-1)(2k+1)}=\frac{\pi}{2},\\
n=2&:\quad \prod_{k=1}^{\infty}\left[\frac{(2k)^{2}(2k+3)}{(2k-1)(2k+2)^{2}}\right]^{k}=\frac{\pi}{2},\\
n=3&:\quad \prod_{k=1}^{\infty}\left[\frac{(2k)^{2}(2k+1)(2k+3)(2k+4)^{2}}{(2k-1)(2k+2)^{4}(2k+5)}\right]^{\frac{1}{2}k(k+1)}=\frac{\pi}{2}.
\end{align*}
The levels $n=1,2$ recover Examples~\ref{ex:wallis} and~\ref{ex:wallis-type2}. Numerically the speed of convergence degrades as $n$ grows (the partial products converge sublinearly), the price paid for the higher-degree exponent.
\end{example}

\section{Polynomial exponents by design}\label{sec:poly-powers}

The binomial exponents $\binom{n+k-2}{n-1}$ ($n=1,2,3,\ldots$) are integer-valued polynomials in~$k$ of every degree, and they form a $\mathbb{Z}$-basis of the space of integer-valued polynomials: the transition matrix to Newton's forward-difference basis $\binom{k-1}{d}$ is unitriangular with integer entries (Vandermonde). So if a \emph{single} block admits convergent products at several consecutive types, integer combinations of the exponents (multiplying and dividing the evaluated products) realise \emph{any} integer-valued polynomial exponent whose degree the type ladder covers. This section carries the programme out for the Wallis block ladder of Example~\ref{ex:typeN-wallis}, producing exponents $k^{2}$, $k^{3}$, and in general $k^{M}$.

One block, several types, first.

\begin{example}[Same block, different exponents]\label{ex:same-block}
Let
\[
W(k):=\frac{(2k)^{2}(2k+1)(2k+3)(2k+4)^{2}}{(2k-1)(2k+2)^{4}(2k+5)},
\]
the level-$3$ block of Example~\ref{ex:typeN-wallis}, with slot families $a=\{0,0,\tfrac12,\tfrac32,2,2\}$ and $b=\{-\tfrac12,1,1,1,1,\tfrac52\}$ (power sums match through order~$3$). Then
\begin{align}\label{E: p1 same block}
\prod_{k=1}^{\infty}W(k)=\frac{5}{4},\qquad
\prod_{k=1}^{\infty}W(k)^{k}=\frac{4}{3},\qquad
\prod_{k=1}^{\infty}W(k)^{\frac{1}{2}k(k+1)}=\frac{\pi}{2}.
\end{align}
\end{example}

\begin{proof}
The third product is Example~\ref{ex:typeN-wallis} at $n=3$. The first is Type-1 (Theorem~\ref{thm:type1}):
\[
\frac{\Gamma(\tfrac12)\Gamma(2)^{4}\Gamma(\tfrac72)}{\Gamma(1)^{2}\Gamma(\tfrac32)\Gamma(\tfrac52)\Gamma(3)^{2}}
=\frac{\sqrt{\pi}\cdot\tfrac{15\sqrt{\pi}}{8}}{\tfrac{\sqrt{\pi}}{2}\cdot\tfrac{3\sqrt{\pi}}{4}\cdot 4}
=\frac{5}{4}.
\]
The second is Type-2 (Theorem~\ref{thm:type2}); with $G(1)=G(2)=G(3)=1$,
\[
\frac{G(\tfrac32)G(\tfrac52)}{G(\tfrac12)G(\tfrac72)}
=\frac{G(\tfrac32)\,\Gamma(\tfrac32)G(\tfrac32)}{G(\tfrac12)\,\Gamma(\tfrac52)\Gamma(\tfrac32)G(\tfrac32)}
=\frac{\Gamma(\tfrac12)G(\tfrac12)}{G(\tfrac12)\,\Gamma(\tfrac52)}
=\frac{\Gamma(\tfrac12)}{\Gamma(\tfrac52)}=\frac{4}{3}. \qedhere
\]
\end{proof}

The three exponents $1$, $k$, $\tfrac12 k(k+1)$ in~\eqref{E: p1 same block} span the integer-valued polynomials of degree $\leq 2$, so any such exponent is now available on the block~$W$: raise the three evaluations to the corresponding integer powers and multiply. To push to degree three we need the same ladder one level up, and for that the following shift lemma converts a type ladder on one block into a type ladder on its \emph{difference block}.

\begin{lemma}[Shift lemma]\label{lem:shift-type}
Let $U(k)$ be a block~\eqref{E: p1 block} whose slot family matches power sums through order $m-1$ for some $m\geq 2$, and set $V(k):=U(k)/U(k+1)$. Then the slot family of~$V$ matches power sums through order~$m$, and
\begin{align}\label{E: p1 shift}
\prod_{k=1}^{\infty}V(k)^{\binom{m+k-2}{m-1}}
=\prod_{k=1}^{\infty}U(k)^{\binom{m+k-3}{m-2}},
\end{align}
i.e.\ the Type-$m$ product of $V$ equals the Type-$(m-1)$ product of $U$, both absolutely convergent.
\end{lemma}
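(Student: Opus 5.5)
The plan has two parts: first show that $V$ has the claimed moment property, then evaluate both products with the multiple gamma values of Theorem~\ref{thm:typeN}(i) and compare them through the functional equation~\eqref{E: p1 mgf fe}. There is no need to manipulate the infinite products term by term.

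\emph{Moments of $V$.} Write $U$ with slot families $A=\{a_\ell\}$ and $B=\{b_\ell\}$. Since $U(k+1)$ has slots $\{a_\ell+1\}$ over $\{b_\ell+1\}$, the block $V(k)=U(k)/U(k+1)$ has the signed slot family (Remark~\ref{rem:signed})
\[
A'=A\cup\{b_\ell+1\},\qquad B'=B\cup\{a_\ell+1\}.
\]
For $1\le j\le m$,
\[
\sum_{a\in A'}a^j-\sum_{b\in B'}b^j=\sum_{t=0}^{j-1}\binom{j}{t}\Big(\sum_\ell b_\ell^{t}-\sum_\ell a_\ell^{t}\Big).
\]
The $t=0$ term vanishes because the two families have equal cardinality $i$. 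The terms with $1\le t\le m-1$ vanish by the hypothesis on $U$. So $V$ matches power sums through order $m$. This is the same computation as in Proposition~\ref{prop:gamma-barnes} and Lemma~\ref{lem:pte}.

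\emph{Evaluation.} The slots of $V$ must avoid $\mathbb{Z}_{<0}$, so I assume the slots of $U$ avoid $\mathbb{Z}_{<0}$ and that the shifted slots $a_\ell+1,b_\ell+1$ do too. That holds automatically once $a_\ell,b_\ell\notin\mathbb{Z}_{<0}$; I would state this standing hypothesis explicitly. Then Theorem~\ref{thm:typeN}(i) at level $m$ applied to $V$ gives
\[
\prod_{\ell}\frac{\Gamma_m(b_\ell+1)\,\Gamma_m(a_\ell+2)}{\Gamma_m(a_\ell+1)\,\Gamma_m(b_\ell+2)}.
\]
By~\eqref{E: p1 mgf fe}, $\Gamma_m(z)/\Gamma_m(z+1)=\Gamma_{m-1}(z)$, so this collapses to $\prod_\ell \Gamma_{m-1}(b_\ell+1)/\Gamma_{m-1}(a_\ell+1)$. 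That is exactly the value Theorem~\ref{thm:typeN}(i) at level $m-1$ assigns to $\prod_k U(k)^{\binom{m+k-3}{m-2}}$. Level $m-1\ge1$ is allowed because $m\ge2$, and $U$ matches power sums through order $m-1$ by hypothesis.

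\emph{Absolute convergence.} This is the only delicate point. If $U$ matches moments through order $m-1$, then $\log U(k)=O(k^{-m})$, since the coefficient of $k^{-j}$ in $\log U(k)$ is a multiple of the $j$-th power-sum difference. Likewise $\log V(k)=O(k^{-m-1})$. Multiplying by exponents of degree $m-2$ and $m-1$ respectively gives terms of size $O(k^{-2})$, so both products converge absolutely.

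An alternative direct route would be summation by parts using $\binom{m+k-2}{m-1}-\binom{m+k-3}{m-1}=\binom{m+k-3}{m-2}$. That route needs the boundary term $U(N+1)^{\binom{m+N-2}{m-1}}\to1$, which again follows from $\log U=O(N^{-m})$. I would mention it as a check, but rely on the gamma-function argument above.
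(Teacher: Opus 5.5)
Your proof is correct, and your moment computation is the same as the paper's. For the product identity, though, you take a different route. The paper never evaluates either side. Writing $e_{m}(k)=\binom{m+k-2}{m-1}$ and using Pascal's rule $e_{m}(k)-e_{m}(k-1)=e_{m-1}(k)$, it telescopes the partial products exactly,
\[
\prod_{k=1}^{K}V(k)^{e_{m}(k)}=\Bigl[\prod_{k=1}^{K}U(k)^{e_{m-1}(k)}\Bigr]\,U(K+1)^{-e_{m}(K)},
\]
and disposes of the boundary factor using $\log U(K+1)=O(K^{-m})$ against $e_{m}(K)=O(K^{m-1})$. This is precisely the route you mention only as a check.

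You instead evaluate both sides by Theorem~\ref{thm:typeN}(i) at levels $m$ and $m-1$, and match them via $\Gamma_{m-1}(z)=\Gamma_{m}(z)/\Gamma_{m}(z+1)$. That step is valid because all the arguments $a_{\ell}+1$, $a_{\ell}+2$, $b_{\ell}+1$, $b_{\ell}+2$ avoid $\mathbb{Z}_{\le 0}$.

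What your route buys:
\begin{itemize}
\item It identifies the common value $\prod_{\ell}\Gamma_{m-1}(b_{\ell}+1)/\Gamma_{m-1}(a_{\ell}+1)$.
\item It shows the lemma is the product-side image of the multiple-gamma functional equation.
\item Your direct $O(k^{-2})$ bound for absolute convergence is more transparent than the paper's appeal to Theorem~\ref{thm:typeN}(i).
\end{itemize}

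What the paper's route buys in exchange: it is elementary and self-contained. It uses only Pascal's rule and the decay of $\log U$, and proves equality of the two products without knowing either value. It does not lean on Choi's Weierstrass form or on the vanishing of the $Q_{n}$ corrections inside Theorem~\ref{thm:typeN}(i).

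Making the hypothesis $a_{\ell},b_{\ell}\notin\mathbb{Z}_{<0}$ explicit is a good call. The lemma's statement omits it, although the paper's proof also needs it when it invokes Theorem~\ref{thm:typeN}(i) for convergence.
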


\begin{proof}
The slot family of $V$ is $(a\cup(b+1)\,;\,b\cup(a+1))$ where $(a;b)$ is the family of~$U$. For $1\leq j\leq m$, the $j$-th power-sum defect of the $V$-family is
\[
\Big[\sum_{\ell}a_{\ell}^{j}-\sum_{\ell}b_{\ell}^{j}\Big]
+\Big[\sum_{\ell}(b_{\ell}+1)^{j}-\sum_{\ell}(a_{\ell}+1)^{j}\Big]
=-\sum_{t=1}^{j-1}\binom{j}{t}\Big[\sum_{\ell}a_{\ell}^{t}-\sum_{\ell}b_{\ell}^{t}\Big]=0,
\]
because the top ($t=j$) binomial terms cancel the unshifted defect, the $t=0$ terms cancel by equal slot counts, and the remaining defects have order $t\leq j-1\leq m-1$. So the $V$-family matches through order~$m$; the left side of~\eqref{E: p1 shift} converges absolutely by Theorem~\ref{thm:typeN}(i), and the right side does likewise at level $m-1$.

Write $e_{m}(k):=\binom{m+k-2}{m-1}$, so $e_{m}(0)=0$ and, by Pascal's rule, $e_{m}(k)-e_{m}(k-1)=e_{m-1}(k)$. For finite~$K$,
\begin{align*}
\prod_{k=1}^{K}V(k)^{e_{m}(k)}
&=\prod_{k=1}^{K}U(k)^{e_{m}(k)}\prod_{k=2}^{K+1}U(k)^{-e_{m}(k-1)}\\
&=\left[\prod_{k=1}^{K}U(k)^{e_{m-1}(k)}\right]U(K+1)^{-e_{m}(K)}.
\end{align*}
Since the $U$-family matches power sums through $m-1$, $\log U(K+1)=O(K^{-m})$, while $e_{m}(K)=O(K^{m-1})$; therefore $U(K+1)^{-e_{m}(K)}\to 1$ and letting $K\to\infty$ gives~\eqref{E: p1 shift}.
\end{proof}

\begin{example}[Exponent $k^{3}$ and $\pi^{6}$]\label{ex:pi6}
\begin{align}\label{E: p1 pi6}
\prod_{k=1}^{\infty}\left[\frac{(2k)^{2}(2k+1)^{2}(2k+4)^{6}(2k+7)}{(2k-1)(2k+2)^{6}(2k+5)^{2}(2k+6)^{2}}\right]^{k^{3}}
=\frac{3645}{1048576}\,\pi^{6}.
\end{align}
\end{example}

\begin{proof}
Let $W$ be the block of Example~\ref{ex:same-block} and set $V(k):=W(k)/W(k+1)$; cancelling common factors gives exactly the displayed block (it is also the level-$4$ block of Example~\ref{ex:typeN-wallis}). By Lemma~\ref{lem:shift-type} and~\eqref{E: p1 same block},
\begin{align*}
\prod_{k=1}^{\infty}V(k)^{k}
&=\prod_{k=1}^{\infty}W(k)=\frac{5}{4},
\\
\prod_{k=1}^{\infty}V(k)^{\frac12 k(k+1)}
&=\prod_{k=1}^{\infty}W(k)^{k}=\frac{4}{3},
\\
\prod_{k=1}^{\infty}V(k)^{\frac16 k(k+1)(k+2)}
&=\frac{\pi}{2}.
\end{align*}
Now express the target exponent in the binomial basis:
\[
k^{3}=6\cdot\tfrac{1}{6}k(k+1)(k+2)-6\cdot\tfrac{1}{2}k(k+1)+k .
\]
Multiplying and dividing the three absolutely convergent products accordingly,
\[
\prod_{k=1}^{\infty}V(k)^{k^{3}}
=\Big(\frac{\pi}{2}\Big)^{6}\Big(\frac{4}{3}\Big)^{-6}\Big(\frac{5}{4}\Big)
=\frac{3645}{1048576}\,\pi^{6}. \qedhere
\]
\end{proof}

\begin{example}[Exponent $k^{2}$]\label{ex:k2}
With $W$ as in Example~\ref{ex:same-block},
\begin{align}\label{E: p1 k2}
\prod_{k=1}^{\infty}\left[\frac{(2k)^{2}(2k+1)(2k+3)(2k+4)^{2}}{(2k-1)(2k+2)^{4}(2k+5)}\right]^{k^{2}}
=\frac{3\pi^{2}}{16}.
\end{align}
\end{example}

\begin{proof}
The same multiply-and-divide method as in Example~\ref{ex:pi6}, one degree down: $k^{2}=2\cdot\tfrac12 k(k+1)-k$, so by~\eqref{E: p1 same block},
\[
\prod_{k=1}^{\infty}W(k)^{k^{2}}=\Big(\frac{\pi}{2}\Big)^{2}\Big(\frac{4}{3}\Big)^{-1}=\frac{3\pi^{2}}{16}. \qedhere
\]
\end{proof}

The same mechanism works at every degree, with the power of~$\pi$ determined by the leading binomial coefficient.

\begin{theorem}[$\pi^{M!}$ products]\label{thm:pi-powers}
For $n\geq 1$ let
\[
W_{n}(k):=\prod_{r=0}^{n-1}\left(\frac{(2k+2r)^{2}}{(2k+2r-1)(2k+2r+1)}\right)^{(-1)^{r}\binom{n-1}{r}}
\]
be the level-$n$ Wallis block of Example~\ref{ex:typeN-wallis}. For every $M\in\mathbb{N}_{+}$ there exist $a,b\in\mathbb{N}_{+}$ such that
\begin{align}\label{E: p1 pi powers}
\prod_{k=1}^{\infty}W_{M+1}(k)^{k^{M}}=\frac{a\,\pi^{M!}}{b}.
\end{align}
\end{theorem}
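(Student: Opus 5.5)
The plan is to build, for the single block $W_{M+1}$, a full ladder of type evaluations at every level $1,\dots,M+1$, and then expand $k^{M}$ in the binomial basis. First observe that $W_{n+1}(k)=W_{n}(k)/W_{n}(k+1)$. Indeed, shifting $k\mapsto k+1$ in $W_{n}$ replaces $r$ by $r+1$, and Pascal's rule $\binom{n-1}{r}+\binom{n-1}{r-1}=\binom{n}{r}$ combines the exponents. By Theorem~\ref{thm:analogue-finder} (through Lemma~\ref{lem:pte}), the slot family of $W_{n}$ matches power sums through order $n$. Write $e_{m}(k)=\binom{m+k-2}{m-1}$ and define
\[
c_{n,m}:=\prod_{k=1}^{\infty}W_{n}(k)^{e_{m}(k)},\qquad 1\le m\le n,
\]
all absolutely convergent by Theorem~\ref{thm:typeN}(i). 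Example~\ref{ex:typeN-wallis} gives $c_{n,n}=\pi/2$. Lemma~\ref{lem:shift-type}, applied with $U=W_{n}$ and $V=W_{n+1}$, gives $c_{n+1,m+1}=c_{n,m}$ for $1\le m\le n$. Hence $c_{M+1,m}=c_{M+2-m,1}$ for $m\ge 2$: every level below the top reduces to a Type-1 value of a lower Wallis block.

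The second step shows that $c_{n,1}\in\mathbb{Q}_{>0}$ for every $n\ge 2$. By Theorem~\ref{thm:type1}, $c_{n,1}$ is a signed product of factors $\Gamma(b+1)/\Gamma(a+1)$ over the slots of $W_{n}$. The slots are $\{r,r\}$ against $\{r-\tfrac12,r+\tfrac12\}$, each with multiplicity $(-1)^{r}\binom{n-1}{r}$, so the gamma arguments are integers and half-integers. Every half-integer gamma value is a rational multiple of $\sqrt{\pi}$. The total signed count of half-integer gamma factors is $2\sum_{r}(-1)^{r}\binom{n-1}{r}=0$ for $n\ge 2$. So the powers of $\sqrt{\pi}$ cancel and $c_{n,1}$ is a positive rational, as in Example~\ref{ex:same-block}, where $c_{3,1}=5/4$ and $c_{2,1}=4/3$. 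Alternatively, telescoping $\prod_{k}W_{n-1}(k)/W_{n-1}(k+1)$ gives $c_{n,1}=W_{n-1}(1)$ directly, since $W_{n-1}(K)\to 1$. This is visibly a positive rational number, and it is the cleaner route.

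The third step expands $k^{M}$ in the basis. Write $k^{M}=\sum_{m=1}^{M+1}\lambda_{m}e_{m}(k)$ with $\lambda_{m}\in\mathbb{Z}$; this is possible because the $e_{m}$ form a $\mathbb{Z}$-basis of the integer-valued polynomials. Since $e_{m}(0)=0$ for $m\ge 2$ and $e_{1}=1$, evaluating at $k=0$ forces $\lambda_{1}=0$ when $M\ge 1$. The leading coefficient of $e_{M+1}$ is $1/M!$, so $\lambda_{M+1}=M!$. Multiplying the absolutely convergent products then gives
\[
\prod_{k=1}^{\infty}W_{M+1}(k)^{k^{M}}=\Big(\frac{\pi}{2}\Big)^{M!}\prod_{m=2}^{M}c_{M+2-m,1}^{\lambda_{m}}.
\]
The right side is a positive rational times $\pi^{M!}$, so it can be written as $a\pi^{M!}/b$ with $a,b\in\mathbb{N}_{+}$. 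The case $M=1$ is Example~\ref{ex:wallis-type2}.

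The main obstacle is bookkeeping rather than analysis: verifying the identity $W_{n+1}(k)=W_{n}(k)/W_{n}(k+1)$ together with the hypothesis of Lemma~\ref{lem:shift-type} that $W_{n}$ matches through order $n$ (with $m=n+1\ge 2$). A secondary point is making sure the lower-level values are genuinely rational and not, for instance, contaminated by $\sqrt{\pi}$. The telescoping argument $c_{n,1}=W_{n-1}(1)$ settles this; it needs only $\log W_{n-1}(K)\to 0$, which holds because the slots match to first order.
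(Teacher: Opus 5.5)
Your proposal is correct and follows the paper's proof. It uses the same four ingredients: the recursion $W_{n+1}(k)=W_{n}(k)/W_{n}(k+1)$; iterating Lemma~\ref{lem:shift-type} to reduce each rung $\prod_{k}W_{M+1}(k)^{\binom{j+k-2}{j-1}}$ to $\prod_{k}W_{M+2-j}(k)$; rationality of the lower rungs from the Type-1 gamma evaluation, where the powers of $\pi$ cancel; and expanding $k^{M}$ in the binomial basis with top coefficient $M!$.

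Your telescoping shortcut $\prod_{k\ge 1}W_{n}(k)=W_{n-1}(1)$ for $n\ge 2$ is a valid and slightly more elementary replacement for the gamma-function rationality step. For example, it gives $W_{2}(1)=5/4$ and $W_{1}(1)=4/3$ directly.
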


\begin{proof}
Pascal's rule on the weights $(-1)^{r}\binom{n-1}{r}$ gives the recursion $W_{n+1}(k)=W_{n}(k)/W_{n}(k+1)$, and the slot family of $W_{n}$ matches power sums through order~$n$ (Lemma~\ref{lem:pte} with $n-1$ in place of~$n$, as in Theorem~\ref{thm:analogue-finder}). Iterating Lemma~\ref{lem:shift-type},
\begin{align}\label{E: p1 ladder}
\prod_{k=1}^{\infty}W_{M+1}(k)^{\binom{j+k-2}{j-1}}
=\prod_{k=1}^{\infty}W_{M+2-j}(k),
\qquad 1\leq j\leq M+1 .
\end{align}
At $j=M+1$ the right side is the Wallis product $\pi/2$ (Example~\ref{ex:wallis}). For $j\leq M$, i.e.\ $m:=M+2-j\geq 2$, the right side is the Type-1 product of~$W_{m}$, which by Theorem~\ref{thm:type1} equals
\[
\prod_{r=0}^{m-1}\left[\frac{\Gamma(r+\tfrac12)\,\Gamma(r+\tfrac32)}{\Gamma(r+1)^{2}}\right]^{(-1)^{r}\binom{m-1}{r}},
\]
and each bracket is $(r+\tfrac12)\Gamma(r+\tfrac12)^{2}/(r!)^{2}$, a positive \emph{rational} multiple of~$\pi$. The total exponent of $\pi$ is $\sum_{r=0}^{m-1}(-1)^{r}\binom{m-1}{r}=0$ for $m\geq 2$, so the value is a positive rational.

Finally, since the exponents $\binom{j+k-2}{j-1}$ ($1\leq j\leq M+1$) form a $\mathbb{Z}$-basis of the integer-valued polynomials of degree~$\leq M$, there are integers $c_{1},\ldots,c_{M}$ with
\[
k^{M}=M!\binom{M+k-1}{M}+\sum_{j=1}^{M}c_{j}\binom{j+k-2}{j-1}.
\]
Raising the evaluations~\eqref{E: p1 ladder} to these integer powers and multiplying, the rational contributions combine into $a/b$ and the leading factor contributes $(\pi/2)^{M!}$, giving~\eqref{E: p1 pi powers}.
\end{proof}

Examples~\ref{ex:k2} and~\ref{ex:pi6} realised the cases $M=2,3$ of this mechanism on slightly tidier blocks; the powers $\pi^{2}$ and $\pi^{6}$ there are the $r=M!$ of Theorem~\ref{thm:pi-powers}.

\subsection{A design recipe}\label{sec:recipe}

The constructions above assemble into a method for designing an infinite product with a prescribed polynomial exponent and an explicitly evaluable limit:

\begin{enumerate}
\item \textbf{Choose slot constants.} Pick slot families whose power sums match through the order required; equivalently, a solution of the Prouhet--Tarry--Escott problem~\cite{article:16}. For a prescribed \emph{target constant}, start instead from a Type-1 or Type-2 product for it (Sections~\ref{sec:type1}--\ref{sec:type2}), where only one or two moments constrain the choice.
\item \textbf{Lift to the required type.} Apply the analogue finder (Theorem~\ref{thm:analogue-finder}) or, one level at a time, the difference-block construction $U(k)\mapsto U(k)/U(k+1)$ of Lemma~\ref{lem:shift-type}. This yields one block admitting convergent products at all types up to the target degree, with known values at every rung (Example~\ref{ex:same-block}).
\item \textbf{Combine exponents.} Express the desired integer-valued polynomial exponent in the binomial basis $\binom{j+k-2}{j-1}$ with integer coefficients, and multiply and divide the rung evaluations accordingly (Examples~\ref{ex:pi6} and~\ref{ex:k2}).
\item \textbf{Evaluate.} Reduce the resulting $\Gamma_{n}$-ratios with the functional equation~\eqref{E: p1 mgf fe}, reflection and duplication formulas~\cite{ARTICLE:1}, and the known closed forms for $G$ at rational arguments~\cite{article:6,article:14}.
\end{enumerate}

Any integer-valued polynomial in~$k$ is reachable as an exponent this way. Part~II applies exactly this design at quarter-integer slots: the block products for the Dirichlet beta function use the shifted form of Theorem~\ref{thm:typeN}(ii) with slots $(4k+3)/4$ and $(4(n-k)+1)/4$, weighted by generalised Eulerian numbers whose recurrence supplies the required power-sum matching.


\part{The Dirichlet beta function at negative integers}\label{part:beta}

\noindent The main theorem is the design of Part~I carried out at quarter-integer slots. The block at index~$k$ uses numerator slots $(4k+4j+3)/4$ and denominator slots $(4k+4(n-j)+1)/4$, each with multiplicity $T(n,j)$; the generalised Eulerian recurrence for the weights $T(n,j)$ supplies the power-sum matching through order~$n$ (and through $n+1$ when $n$ is odd) that Theorem~\ref{thm:typeN}(ii) requires, so for odd~$n$ the infinite product collapses to a finite $\Gamma_{n+1}$-ratio exactly as in Section~\ref{sec:products-typeN}. What is genuinely new is the identification of that finite gamma template with $\beta'(-n)+(\log 4)\beta(-n)$. Section~\ref{sec:proof-map} states the coefficient target and the five-step spine before the toolkit lemmas.

\section{Statement of main results}\label{sec:main-results}

\smallskip\noindent\textit{This section states the headline results. Every object used below is defined here; proofs are deferred to the sections indicated. The remainder of the supplement then follows proof order.}

\smallskip\noindent\textbf{Eulerian weights.} For $n\ge 0$ and $0\le k\le n$, let $T(n,k)$ be the coefficient of $x^{n-k}$ in the generalised Eulerian polynomial $A_{n,4}(x)$ (OEIS A225118 lists these rows in descending order)~\cite{xiong2013general,oeisA225118}. Equivalently $T(n,k)$ is the coefficient of $x^{k}$ in $x^{n}A_{n,4}(1/x)$, with $T(0,0)=1$, $T(n,k)=0$ for $k<0$ or $k>n$, and recurrence
\begin{align}\label{E: T recur}
T(n+1,k) \;=\; \big(4(n-k)+5\big)\,T(n,k-1) \;+\; \big(4k+3\big)\,T(n,k),
\qquad T(n,-1)=0.
\end{align}

\smallskip\noindent\textbf{Duke polynomials.} For $k,\ell\ge 1$, Duke--Imamo\u{g}lu~\cite{article:duke2006} (Prop.~3.1) defines
\begin{align}\label{E: Duke P def}
P_{k,\ell}(x)=\sum_{j=1}^{\ell}\binom{\ell-1}{j-1}(-1)^{j+k}(j-x)^{k-1}.
\end{align}

\smallskip\noindent\textbf{Quarter-integer slots.} At level~$n$, each row index $k\in\{0,\ldots,n\}$ carries a denominator slot $u_{k}:=(4k+3)/4$ and a numerator slot $v_{k}:=(4(n-k)+1)/4$ (for example $u_{0}=3/4$ and $v_{0}=(4n+1)/4$).

\smallskip\noindent\textbf{Multiple gamma.} Write $\Gamma_{m}$ for Vignéras' multiple gamma function with $\Gamma_{1}=\Gamma$ (full normalisation and shift law in Section~\ref{sec:prelims}, paragraph~\ref{par:gamma-convention}).

\smallskip\noindent\textbf{Finite gamma template.} The \emph{finite gamma template}~$\mathcal{S}_{n}$ is the $T$-weighted sum of $\log\Gamma_{n+1}$ slot differences
\begin{align}\label{E: S_n def}
\mathcal{S}_{n}
&:=\sum_{k=0}^{n}T(n,k)\Big[\log\Gamma_{n+1}(v_{k})-\log\Gamma_{n+1}(u_{k})\Big]
\nonumber\\
&=\sum_{k=0}^{n}T(n,k)\Biggl[
\log\Gamma_{n+1}\!\Bigl(\frac{4(n-k)+1}{4}\Bigr)
-\log\Gamma_{n+1}\!\Bigl(\frac{4k+3}{4}\Bigr)
\Biggr].
\end{align}
Equivalently, $\mathcal{S}_{n}=\log\prod_{k=0}^{n}(\Gamma_{n+1}(v_{k})/\Gamma_{n+1}(u_{k}))^{T(n,k)}$: a \emph{finite} gamma ratio, defined for every $n\ge 1$ with no convergence question; it is not the infinite product~$P_{n}$. The two meet only for odd~$n$, where the shifted Type-$N$ criterion evaluates $P_{n}$ (Section~\ref{sec:typeN}) and Section~\ref{sec:Ln} proves $\mathcal{S}_{n}=\log P_{n}$; for even~$n$ the product $P_{n}$ diverges (Remark~\ref{rem:even-diverge}) while $\mathcal{S}_{n}$ still evaluates $\beta'(-n)+(\log 4)\,\beta(-n)$ (Theorem~\ref{thm:main-finite}).

\smallskip\noindent\textbf{Duke side.} Define
\begin{align}\label{E: Duke beta}
\mathcal{D}_{n}:=4^{n}\sum_{\ell=1}^{n+1}\Big[P_{n+1,\ell}\!\Big(\tfrac{1}{4}\Big)\log\Gamma_{\ell}\!\Big(\tfrac{1}{4}\Big)-P_{n+1,\ell}\!\Big(\tfrac{3}{4}\Big)\log\Gamma_{\ell}\!\Big(\tfrac{3}{4}\Big)\Big].
\end{align}

\smallskip\noindent\textbf{Wallis--Eulerian product.} For $k\ge 0$ write the block ratio
\begin{align}\label{E: block def}
B_{n,k}:=\prod_{j=0}^{n}\frac{(4k+4j+3)^{T(n,j)}}{(4k+4(n-j)+1)^{T(n,j)}},
\qquad
U_{n}(k):=\log B_{n,k},
\end{align}
and the infinite product
\begin{align}\label{E: Pn def}
P_{n} := \prod_{k=0}^{\infty}B_{n,k}^{\binom{n+k}{n}}
=\prod_{k=0}^{\infty}\left[ \frac{(4k+3)^{T(n,0)}(4k+7)^{T(n,1)}\cdots(4k+4n+3)^{T(n,n)}}{(4k+1)^{T(n,n)}(4k+5)^{T(n,n-1)}\cdots(4k+4n+1)^{T(n,0)}}\right]^{\binom{n+k}{n}}.
\end{align}
When $P_{n}$ converges (proved for odd~$n$ in Section~\ref{sec:typeN}), we write $\mathcal{L}_{n}:=\log P_{n}$.

\begin{theorem}[Finite gamma template, all $n$]\label{thm:main-finite}
For every $n\ge 1$,
\begin{align}\label{E: finite main}
\mathcal{S}_{n}=\mathcal{D}_{n}=\beta'(-n)+(\log 4)\,\beta(-n).
\end{align}
\end{theorem}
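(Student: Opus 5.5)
The identity has two halves. The second equality $\mathcal{D}_{n}=\beta'(-n)+(\log 4)\beta(-n)$ is Duke--Imamo\u{g}lu's evaluation (their Prop.~3.1, specialised to the character mod~$4$, with the factor $4^{n}$ coming from $\beta(s)=4^{-s}[\zeta(s,\tfrac14)-\zeta(s,\tfrac34)]$ and the $\log 4$ term from differentiating $4^{-s}$). I will quote it, after checking that their Barnes $G_{\ell}$ convention matches Vign\'eras' $\Gamma_{\ell}$ via the translation $\Gamma_{n}=G_{n}^{(-1)^{n+1}}$ in Definition~\ref{def:multiple-gamma}. So the real work is the purely algebraic identity $\mathcal{S}_{n}=\mathcal{D}_{n}$ between two finite combinations of log-multiple-gamma values.

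\textbf{Step 1 (reduce to the basis).} Every slot in $\mathcal{S}_{n}$ has the form $m+\tfrac14$ or $m+\tfrac34$ with $0\le m\le n$. I apply the shift law $\log\Gamma_{n+1}(z+1)=\log\Gamma_{n+1}(z)-\log\Gamma_{n}(z)$ repeatedly to push each argument down to $\tfrac14$ or $\tfrac34$. By induction,
\[
\log\Gamma_{n+1}(z+m)=\sum_{\ell=1}^{n+1}(-1)^{n+1-\ell}\binom{m}{n+1-\ell}\log\Gamma_{\ell}(z)+(\text{terms }\log\Gamma_{\ell}\text{ at integer shifts below level }1),
\]
where the level-$0$ terms are to be read via $\Gamma_{0}(z)=z^{-1}$ or the appropriate convention. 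This expresses $\mathcal{S}_{n}$ as $\sum_{\ell}[\alpha_{\ell}\log\Gamma_{\ell}(\tfrac14)-\alpha'_{\ell}\log\Gamma_{\ell}(\tfrac34)]$ plus possibly elementary logs. The coefficients $\alpha_{\ell},\alpha'_{\ell}$ are explicit $T$-weighted binomial sums, namely $\alpha_{\ell}=\sum_{k}T(n,k)(-1)^{n+1-\ell}\binom{n-k}{n+1-\ell}$ from the $v_{k}$ slots, and the analogue with $\binom{k}{\cdot}$ from the $u_{k}$ slots. I must also check that any stray $\log\Gamma_{1}$ or rational-log contributions cancel, or match Duke's $\ell=1$ term.

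\textbf{Step 2 (closed form of the coefficients).} This is the FE-coefficient lemma. I want to show $\alpha_{\ell}=4^{n}P_{n+1,\ell}(\tfrac14)$ and $\alpha'_{\ell}=4^{n}P_{n+1,\ell}(\tfrac34)$. Write $P_{n+1,\ell}(x)=(-1)^{n+1}\sum_{j}\binom{\ell-1}{j-1}(-1)^{j}(j-x)^{n}$, a finite-difference expression. I then use the generating identity for the generalised Eulerian polynomial, namely the Worpitzky-type formula
\[
(4x+3)^{n}=\sum_{k}T(n,k)\binom{x+n-k}{n}
\]
(or its variant obtained from $\sum_{m\ge0}(4m+3)^{n}t^{m}=A_{n,4}(t)/(1-t)^{n+1}$). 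I will prove the version I need by induction from the recurrence~\eqref{E: T recur}. Substituting $x=j-1$ and applying $\sum_{j}\binom{\ell-1}{j-1}(-1)^{j}$ converts the binomials $\binom{j-1+n-k}{n}$ into $\binom{n-k}{n+1-\ell}$ by Vandermonde, and $(4j-1)^{n}=4^{n}(j-\tfrac14)^{n}$ appears, giving exactly Duke's polynomial at $x=\tfrac14$. The $\tfrac34$ side follows by the symmetry $T(n,k)=T(n,n-k)$ paired with the reflection $x\mapsto 1-x$, or by the companion identity for $(4x+1)^{n}$.

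\textbf{Main obstacle.} The hard part is Step~2's exact bookkeeping. Indices, signs $(-1)^{j+k}$, the range $\ell=1,\dots,n+1$, and the boundary/level-zero terms from Step~1 must all align. I expect the cleanest route is to first prove the Worpitzky identity for $T$ by induction from~\eqref{E: T recur}. Then I would verify $n=1$ directly (where $T(1,\cdot)=(3,1)$ must reproduce the Catalan case of Example~\ref{ex:catalan}, i.e.\ $2K/\pi$ via $\beta(-1)=0$) to fix all sign conventions before doing the general computation.
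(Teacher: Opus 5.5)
Your outline is correct, and its outer structure matches the paper's. The Duke side is quoted exactly as in Lemma~\ref{lem:duke-L} and Proposition~\ref{prop:duke-side}. Your Step~1 coefficients $(-1)^{n+1-\ell}\binom{t}{n+1-\ell}$ are the paper's Lemma~\ref{lem:A-closed} and Proposition~\ref{prop:E-binomial}.

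The rational-tail check you left open is immediate. Reaching $\Gamma_{0}$ from $\Gamma_{n+1}$ takes $n+1$ unit shifts, but every template slot has height $t\le n$. So no elementary logarithm appears for any individual slot; the paper's Lemma~\ref{lem:rational-cancel} reaches the same slotwise conclusion.

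The genuine difference is in the key combinatorial identity, Lemma~\ref{lem:T-binom-P}.
\begin{itemize}
\item The paper shows that both sides of $\sum_k T(n,k)\binom{n-k}{n+1-\ell}=\sum_j\binom{\ell-1}{j-1}(-1)^{j-\ell}(4j-1)^n$ satisfy the two-index recurrence $F(n+1,\ell)=(4\ell-1)F(n,\ell)+4(\ell-1)F(n,\ell-1)$ with matching boundary values. It then reruns the whole induction for $4j-3$.
\item You instead prove the Worpitzky identity $(4x+3)^n=\sum_k T(n,k)\binom{x+n-k}{n}$ from~\eqref{E: T recur}. The induction step really is one line, since $(4(n-k)+1)(x-k)+(4k+3)(x+n+1-k)=(n+1)(4x+3)$. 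You then read off the coefficient with the $(\ell-1)$-st forward difference at $x=0$, using $\Delta\binom{x+c}{r}=\binom{x+c}{r-1}$ where $\Delta f(x)=f(x+1)-f(x)$.
\end{itemize}
Your route is shorter and explains why Eulerian weights meet Duke's finite-difference polynomials. The paper's route stays with integer sums and never extends binomials to polynomial arguments, at the price of heavier index bookkeeping.

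One correction: the $\tfrac34$ side cannot come from a symmetry $T(n,k)=T(n,n-k)$. The weights are not palindromic: $T(1,\cdot)=(3,1)$ and $T(2,\cdot)=(9,22,1)$. Your other option is the right one. Substitute $x\mapsto -1-x$ in the Worpitzky identity and use upper negation $\binom{-y}{n}=(-1)^{n}\binom{y+n-1}{n}$. This gives $(4x+1)^n=\sum_k T(n,k)\binom{x+k}{n}$, which you can also prove by the same one-line induction. Its $(\ell-1)$-st difference at $x=0$ yields $\sum_k T(n,k)\binom{k}{n+1-\ell}$, which is~\eqref{E: T binom 3}.

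Relatedly, in the paper's convention $A_{n,4}(t)/(1-t)^{n+1}$ generates $(4m+1)^n$. The numerator for $(4m+3)^n$ is the reversed polynomial $\sum_k T(n,k)t^k$. Since you derive the identity directly from the recurrence, this slip is harmless.
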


\smallskip\noindent\textit{Proof deferred to Sections~\ref{sec:duke-side} and~\ref{sec:closure}.}
Theorem~\ref{thm:induction} and Proposition~\ref{prop:coeff-expansion} give $\mathcal{S}_{n}=\mathcal{D}_{n}$; Proposition~\ref{prop:duke-side} (via Lemma~\ref{lem:duke-L}) identifies $\mathcal{D}_{n}$ with $\beta'(-n)+(\log 4)\beta(-n)$.

\begin{theorem}[Odd-$n$ product]\label{thm:main-odd}
If $n=2m-1$ is odd, then $P_{n}$ converges absolutely and
\begin{align}\label{E: odd beta product}
e^{\beta'(-n)}=P_{n}.
\end{align}
\end{theorem}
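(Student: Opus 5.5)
The proof has three steps: show that $P_n$ is a shifted Type-$(n+1)$ product, evaluate it as $\mathcal{S}_n$ by Theorem~\ref{thm:typeN}(ii), and then use Theorem~\ref{thm:main-finite} together with the vanishing of $\beta(-n)$ at odd $n$.

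\textbf{Step 1: set up the product.} Divide every factor of $B_{n,k}$ by $4$. Since $\sum_j T(n,j)$ appears in both numerator and denominator, the powers of $4$ cancel. So $B_{n,k}$ is a block in the sense of~\eqref{E: p1 block} with:
\begin{itemize}
\item numerator slots $u_j=(4j+3)/4$, each with multiplicity $T(n,j)$;
\item denominator slots $v_j=(4(n-j)+1)/4$, each with multiplicity $T(n,j)$.
\end{itemize}
All slots are positive non-integers, so they avoid $\mathbb{Z}_{\le 0}$. The exponent is $\binom{n+k}{n}$ with $k\ge 0$, which is exactly the shifted form of Theorem~\ref{thm:typeN}(ii) at level $n$. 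That theorem requires the weighted power sums to match for $r=1,\dots,n+1$:
\[
\sum_j T(n,j)(4j+3)^r=\sum_j T(n,j)(4(n-j)+1)^r .
\]

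\textbf{Step 2: prove the power-sum matching (the main obstacle).} Put $\Delta_r(n):=\sum_j T(n,j)\big[(4j+3)^r-(4n-4j+1)^r\big]$; we need $\Delta_r(n)=0$ for $1\le r\le n+1$ when $n$ is odd. My first attempt is a generating-function argument. The weights are Eulerian-type numbers, so there should be a Worpitzky-type identity of the form
\[
\sum_j T(n,j)\binom{x+n-j}{n}\cdot 4^n = (4x+c)^n
\]
(generalised Eulerian polynomials satisfy such identities). The idea is to use it to rewrite
\[
\sum_{k\ge0} \binom{n+k}{n}\Big[\sum_j T(n,j)(4k+4j+3)^{-s}-\sum_j T(n,j)(4k+4n-4j+1)^{-s}\Big]
\]
as a combination of $\sum_m \chi(m)\,m^{n-s}$-type sums with $\chi$ the character mod $4$. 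Concretely, the multiplicity with which each odd integer $4m+3$ or $4m+1$ occurs, summed over $(k,j)$, is a polynomial in $m$. This is what makes the matching visible.

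Alternatively, one can argue by induction on $n$ using the recurrence~\eqref{E: T recur}, tracking the polynomial $F_n(y):=\sum_j T(n,j)\big[e^{(4j+3)y}-e^{(4n-4j+1)y}\big]$. The recurrence translates into a first-order differential relation of the form $F_{n+1}=(\text{linear in }d/dy)F_n$ plus a reflection term. The goal is to show $F_n(y)=O(y^{n+1})$ in general, and $O(y^{n+2})$ when $n$ is odd. The parity gain should come from the symmetry $j\mapsto n-j$ combined with $y\mapsto -y$: after centring at $y\mapsto y-(2n+2)/4$-type shifts, $F_n$ becomes odd or even, killing one extra coefficient exactly for odd $n$. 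Once matching through order $n+1$ is established, Theorem~\ref{thm:typeN}(ii) gives absolute convergence and
\[
\log P_n=\sum_j T(n,j)\big[\log\Gamma_{n+1}(v_j)-\log\Gamma_{n+1}(u_j)\big]=\mathcal{S}_n .
\]

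\textbf{Step 3: conclude.} By Theorem~\ref{thm:main-finite}, $\log P_n=\beta'(-n)+(\log 4)\beta(-n)$. For odd $n$, $\beta(-n)=E_n/2=0$, since odd-index Euler numbers vanish; this also follows from the functional equation of $\beta$, which gives trivial zeros at negative odd integers. Hence $P_n=e^{\beta'(-n)}$.

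The real work is Step 2, and in particular the extra order $n+1$ at odd $n$. I would first check $n=1$ directly: the slots reproduce Example~\ref{ex:catalan}, where both moments match. I would then try the reflection-symmetry argument on $F_n$.
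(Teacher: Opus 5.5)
Your Steps~1 and~3 match the paper's reduction exactly (Proposition~\ref{prop:typeN-odd}, then Theorem~\ref{thm:main-finite} with $\beta(-n)=0$). The gap is Step~2, which you rightly call the real work: it is a list of expectations, and the low-order identities $\Delta_r(n)=0$ for $1\le r\le n$ are never established. The differential sketch cannot deliver them as phrased. A generic first-order operator in $d/dy$ \emph{lowers} the order of vanishing, so ``$F_{n+1}=(\text{linear in }d/dy)F_n$'' gives nothing until the exact operator is known. Using $4(n-j)+1=4(n+1)-(4j+3)$ in~\eqref{E: T recur}, both halves of $F_n$ separately satisfy the following, with no reflection term:
\[
F_{n+1}=(1-e^{4y})\,F_n'+4(n+1)e^{4y}F_n,\qquad F_0=e^{3y}-e^{y}.
\]
The order gain is the cancellation $-4(n+1)+4(n+1)=0$ in the $y^{n+1}$ coefficient. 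This is exactly the top-order coefficient $S-4m=0$ in the paper's Lemma~\ref{lem:moment}. Better still, substituting $F_n=(1-e^{4y})^{n+1}H_n$ reduces the recursion to $H_{n+1}=H_n'$ with $H_0=-\tfrac12\operatorname{sech}y$. Hence $F_n=-\tfrac12(1-e^{4y})^{n+1}\operatorname{sech}^{(n)}(y)=O(y^{n+1})$. Your Worpitzky route leads to the same closed form, via $\sum_{m\ge0}\big[e^{(4m+3)y}-e^{(4m+1)y}\big]=-\tfrac12\operatorname{sech}y$ for $y<0$. However, the identity you wrote is misnormalised. The correct forms are $(4x+3)^n=\sum_jT(n,j)\binom{x+n-j}{n}$ and $(4x+1)^n=\sum_jT(n,j)\binom{x+j}{n}$, with no factor $4^n$. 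You would still have to show how they yield the moment identities.

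Your parity step is correct once $F_n=O(y^{n+1})$ is in hand, provided the centring is done multiplicatively rather than by shifting $y$. Indeed $e^{-2(n+1)y}F_n(y)=2\sum_jT(n,j)\sinh\big((4j+1-2n)y\big)$ is odd, so its $y^{n+1}$ coefficient vanishes for odd $n$. This is the paper's Lemma~\ref{lem:top-power-odd} in generating-function form. In the closed form it is just $\operatorname{sech}^{(n)}(0)=0$ for odd $n$; for even $n$ the same formula proves the defect~\eqref{E: top power defect}, which the paper only sketches. Finally, Theorem~\ref{thm:typeN}(ii) does not itself assert absolute convergence. You should add, as in Lemma~\ref{lem:Pn-converge}, that matching through order $n+1$ gives $U_n(k)=O(k^{-n-2})$, hence $\sum_k\binom{n+k}{n}|U_n(k)|<\infty$.
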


\smallskip\noindent\textit{Proof deferred to Section~\ref{sec:Ln}.}
For odd~$n$, $\beta(-n)=0$. Proposition~\ref{prop:Ln-odd} gives $\log P_{n}=\mathcal{S}_{n}$; apply Theorem~\ref{thm:main-finite}. There is no even-$n$ analogue of~\eqref{E: odd beta product}: for even~$n$ the product $P_{n}$ diverges (Remark~\ref{rem:even-diverge}), and only the finite identity~\eqref{E: finite main} holds.

\section{Proof map}\label{sec:proof-map}

\noindent The finite identity $\mathcal{S}_{n}=\mathcal{D}_{n}$ is a match of coefficients in the quarter-integer basis $\{\log\Gamma_{\ell}(b/4)\}_{\ell,b}$. For odd~$n$ the infinite product is then identified with~$\mathcal{S}_{n}$ by Type-$N$. The argument proceeds in the following order.

\smallskip\noindent\textbf{Coefficient target.} Expanding~$\mathcal{S}_{n}$ via the functional equation of~$\Gamma_{m}$ produces coefficients
\begin{align}\label{E: E def}
E_{\ell,b}(n):=\sum_{k=0}^{n}T(n,k)\Big[A_{n+1,\,4(n-k)+1}(\ell,b)-A_{n+1,\,4k+3}(\ell,b)\Big],
\end{align}
where $A_{m,r}(\ell,b)$ is the coefficient of $\log\Gamma_{\ell}(b/4)$ in the expansion of $\log\Gamma_{m}(r/4)$ (Lemma~\ref{lem:FE}). The finite identity~\eqref{E: finite main} follows once
\begin{align}\label{E: exponent match}
E_{\ell,1}(n)=4^{n}\,P_{n+1,\ell}\!\Big(\tfrac14\Big),\qquad
E_{\ell,3}(n)=-4^{n}\,P_{n+1,\ell}\!\Big(\tfrac34\Big)
\end{align}
for $1\le\ell\le n+1$, since then $\mathcal{S}_{n}=\mathcal{D}_{n}$ by~\eqref{E: Duke beta}. (By definition $A_{m,r}(\ell,b)=0$ for $\ell>m$, so $E_{\ell,b}(n)=0$ whenever $\ell>n+1$.)

\smallskip\noindent\textbf{Steps.}
\begin{enumerate}
\item \textbf{Expand.} Write every $\log\Gamma_{n+1}(r/4)$ in the quarter-integer basis and cancel rational tails (Lemmas~\ref{lem:FE},~\ref{lem:rational-cancel}; Proposition~\ref{prop:coeff-expansion}).
\item \textbf{Close $A$.} Evaluate the FE coefficients in closed form (Lemma~\ref{lem:A-closed}).
\item \textbf{Binomial $E$.} Substitute to express $E_{\ell,b}(n)$ as a $T$-weighted binomial sum (Proposition~\ref{prop:E-binomial}).
\item \textbf{Match Duke.} Identify those binomials with $P_{n+1,\ell}(b/4)$ (Lemma~\ref{lem:T-binom-P}, Theorem~\ref{thm:induction}); identify $\mathcal{D}_{n}$ with $\beta'(-n)+(\log 4)\beta(-n)$ (Proposition~\ref{prop:duke-side}).
\item \textbf{Odd product.} For odd~$n$, Type-$N$ gives $\log P_{n}=\mathcal{S}_{n}$ (Section~\ref{sec:Ln}).
\end{enumerate}

\smallskip\noindent\textbf{Dependency chain.}
\begin{align*}
\mathcal{S}_{n}
\xrightarrow{\mathrm{FE}}
E_{\ell,b}(n)
\xrightarrow{A\text{-closed}}
T\text{-binomials}
\xrightarrow{\mathrm{Eulerian\text{--}Duke}}
P_{n+1,\ell}(b/4)
\longrightarrow
\mathcal{D}_{n},
\end{align*}
and for odd~$n$, Type-$N$ gives $\log P_{n}=\mathcal{S}_{n}$, hence $P_{n}=e^{\beta'(-n)}$.

\smallskip\noindent\textbf{Section order.} Section~\ref{sec:prelims} collects the $T$-sum and functional-equation toolkits. Sections~\ref{sec:duke-side}--\ref{sec:closure} carry out steps 1--4 of the coefficient match, completing the proof of Theorem~\ref{thm:main-finite}; Section~\ref{sec:typeN} then cashes the $T$-sum identities on the product side, and Section~\ref{sec:Ln} assembles the odd-$n$ product (step~5).

\section{Preliminaries}\label{sec:prelims}
\subsection{$T$-sums and toolkit}

\noindent\textit{Reference section.} $T$-sum identities used throughout and the FE/Duke lemmas used in the coefficient match. Skim on a first pass; return when a specific lemma is invoked.

\subsubsection{$T$-sum identities}

Recall that $T(n,k)$, its recurrence~\eqref{E: T recur}, and the slot pair $u_{k}:=(4k+3)/4$, $v_{k}:=(4(n-k)+1)/4$ were fixed in Section~\ref{sec:main-results}.

\begin{lemma}[$T$-row total]\label{lem:tsum}
For every $n\ge 0$, $\sum_{k=0}^{n}T(n,k)=4^{n}n!$.
\end{lemma}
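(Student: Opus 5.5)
We argue by induction on $n$, summing the recurrence~\eqref{E: T recur} over $k$. Write $R_{n}:=\sum_{k=0}^{n}T(n,k)$. The base case is $R_{0}=T(0,0)=1=4^{0}\,0!$.

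For the inductive step, sum~\eqref{E: T recur} over $k=0,\ldots,n+1$. The boundary conventions $T(n,-1)=0$ and $T(n,n+1)=0$ let us reindex both pieces over $k=0,\ldots,n$:
\[
R_{n+1}=\sum_{k=0}^{n+1}\big(4(n-k)+5\big)T(n,k-1)+\sum_{k=0}^{n+1}(4k+3)T(n,k)
=\sum_{j=0}^{n}\big(4(n-j)+1\big)T(n,j)+\sum_{j=0}^{n}(4j+3)T(n,j).
\]
Here the first sum was reindexed by $j=k-1$, so that $4(n-k)+5=4(n-j)+1$.

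Adding the two sums, the coefficient of $T(n,j)$ is
\[
\big(4(n-j)+1\big)+(4j+3)=4n+4=4(n+1).
\]
This is independent of $j$, so the $j$-dependence cancels exactly. Hence $R_{n+1}=4(n+1)R_{n}$, and the inductive hypothesis gives
\[
R_{n+1}=4(n+1)\cdot 4^{n}n!=4^{n+1}(n+1)!.
\]

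The only point needing care is the bookkeeping at the endpoints: the terms $T(n,-1)$ and $T(n,n+1)$ must vanish, and the reindexed weight must come out to exactly $4(n-j)+1$. Both follow directly from the stated conventions. As an alternative check, evaluate the generating polynomial at $x=1$: since $\sum_{k}T(n,k)=A_{n,4}(1)$, and the generalised Eulerian polynomial counts signed-permutation-type objects of cardinality $4^{n}n!$, the same total results. The induction above, however, is self-contained.
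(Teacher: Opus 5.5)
Your proof is correct and is the same argument as the paper's. Summing the recurrence~\eqref{E: T recur} over $k$ combines the two slot weights $4(n-j)+1$ and $4j+3$ into the constant $4(n+1)$, giving $R_{n+1}=4(n+1)R_{n}$ and the claim by induction from $T(0,0)=1$; your closing combinatorial aside about $A_{n,4}(1)$ is unneeded (and unproved as stated), so you can drop it.
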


\begin{proof}
From~\eqref{E: T recur}, $\sum_{k}(4k+3)T(n,k)+\sum_{k}(4(n-k)+1)T(n,k)=\sum_{k}T(n+1,k)$, i.e.\ $4(n+1)\sum_{k}T(n,k)=\sum_{k}T(n+1,k)$. Induct from $T(0,0)=1$.
\end{proof}

\begin{lemma}[Moment vanishing]\label{lem:moment}
With $v_{k},u_{k}$ as above, for $0\le m\le n$,
\begin{align}\label{E: moment zero}
\sum_{k=0}^{n}T(n,k)\big[v_{k}^{\,m}-u_{k}^{\,m}\big]=0,
\end{align}
and hence for any polynomial~$R$ of degree $\le n$,
\begin{align}\label{E: poly moment}
\sum_{k=0}^{n}T(n,k)\big[R(v_{k})-R(u_{k})\big]=0.
\end{align}
\end{lemma}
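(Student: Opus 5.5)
The plan is to prove the polynomial form~\eqref{E: poly moment} directly, by induction on~$n$; the monomial statement~\eqref{E: moment zero} is then the special case $R(x)=x^{m}$. For the induction, define the functional
\[
\Phi_{n}(R):=\sum_{k=0}^{n}T(n,k)\big[R(v^{(n)}_{k})-R(u_{k})\big],
\]
where $v^{(n)}_{k}=(4(n-k)+1)/4$ records the dependence on the level. The base case $n=0$ is immediate: $T(0,0)=1$ and $R$ is a constant, so $\Phi_{0}(R)=R(1/4)-R(3/4)=0$.

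For the inductive step, suppose $\Phi_{n}$ kills every polynomial of degree $\le n$, and let $f$ have degree $\le n+1$. Put $g(k):=f(v^{(n+1)}_{k})-f(u_{k})$. I will rewrite $\Phi_{n+1}(f)=\sum_{k}T(n+1,k)g(k)$ using the recurrence~\eqref{E: T recur}. After reindexing $k-1\mapsto j$ in the first term, the coefficients become $4(n-j)+1=4v^{(n)}_{j}$ and $4j+3=4u_{j}$. This gives
\[
\Phi_{n+1}(f)=4\sum_{j=0}^{n}T(n,j)\big[v_{j}\,g(j+1)+u_{j}\,g(j)\big],\qquad v_{j}:=v^{(n)}_{j}.
\]
Next I substitute the slot shifts $v^{(n+1)}_{j+1}=v_{j}$, $v^{(n+1)}_{j}=v_{j}+1$ and $u_{j+1}=u_{j}+1$. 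The summand then becomes
\[
v_{j}f(v_{j})-u_{j}f(u_{j})+u_{j}f(v_{j}+1)-v_{j}f(u_{j}+1).
\]

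The key observation is that $u_{j}+v_{j}=n+1$ for every~$j$. This lets me replace $u_{j}$ by $n+1-v_{j}$ in the mixed term $u_{j}f(v_{j}+1)$, and $v_{j}$ by $n+1-u_{j}$ in $v_{j}f(u_{j}+1)$. After this substitution the summand becomes $h(v_{j})-h(u_{j})$ with
\[
h(x):=x\,f(x)+(n+1-x)\,f(x+1)=(n+1)f(x+1)-x\big[f(x+1)-f(x)\big].
\]
Hence $\Phi_{n+1}(f)=4\,\Phi_{n}(h)$.

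It remains to bound the degree of $h$, which is the only delicate point. If $f$ has degree $d$ and leading coefficient $c$, then the $x^{d}$ coefficient of $h$ is $c(n+1)-c\,d=c(n+1-d)$. This vanishes exactly in the critical case $d=n+1$, and for $d\le n$ the degree of $h$ is already at most~$n$. So in every case $\deg h\le n$, the inductive hypothesis gives $\Phi_{n}(h)=0$, and therefore $\Phi_{n+1}(f)=0$.

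The main obstacle I anticipate is bookkeeping rather than an idea. I need to check the reindexing of the recurrence at the boundary terms, using $T(n,-1)=0$ and $T(n,n+1)=0$, so that no stray endpoint contributions survive. I also need to confirm the identity $v^{(n+1)}_{j+1}=v^{(n)}_{j}$ exactly, since the cancellation of the leading coefficient depends on the slot sum being precisely $n+1$.
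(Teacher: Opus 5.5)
Your proof is correct and is essentially the paper's argument: both induct on $n$ through the recurrence~\eqref{E: T recur} with the reindex $k\mapsto k+1$, both use the complementary slot sum $u_j+v_j=n+1$ (the paper's $a_k+b_k=S=4n+4$), and both hinge on the vanishing of the top-degree coefficient at degree $n+1$ (your $c(n+1-d)=0$ is the paper's $S-4m=0$). The only difference is packaging: you work with a general polynomial and the operator $f\mapsto h(x)=(n+1)f(x+1)-x\,[f(x+1)-f(x)]$, which gives $\Phi_{n+1}(f)=4\,\Phi_{n}(h)$ in one line, whereas the paper expands monomials binomially and tracks the coefficient of each $D_n(q)$.
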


\begin{proof}
It is equivalent (clearing the factor $4^{m}$) to prove
\begin{align}\label{E: moment int}
\sum_{k=0}^{n}T(n,k)\Big[(4k+3)^{m}-(4(n-k)+1)^{m}\Big]=0
\qquad(0\le m\le n).
\end{align}
Write $a_{k}:=4k+3$, $b_{k}:=4(n-k)+1$, and $S:=4n+4$ (so $a_{k}+b_{k}=S$), and set $D_n(r):=\sum_{k}T(n,k)\big(a_{k}^{r}-b_{k}^{r}\big)$. We prove~\eqref{E: moment int} by induction on~$n$.

\smallskip\noindent\textit{Base $n=0$.} Then $T(0,0)=1$ and $m=0$, so $D_0(0)=0$.

\smallskip\noindent\textit{Induction step.} Assume $D_n(r)=0$ for all $0\le r\le n$. At level~$n+1$ the complementary slot is $4(n-k)+5$. From~\eqref{E: T recur}, reindexing $k\mapsto k+1$ in the $T(n,k-1)$ sum yields
\begin{align}\label{E: moment expand A}
\sum_{k=0}^{n+1}T(n+1,k)\,(4k+3)^{m}
&=\sum_{j=0}^{n}T(n,j)\,b_{j}\,(a_{j}+4)^{m}
+\sum_{j=0}^{n}T(n,j)\,a_{j}^{m+1},
\\
\label{E: moment expand B}
\sum_{k=0}^{n+1}T(n+1,k)\,(4(n-k)+5)^{m}
&=\sum_{j=0}^{n}T(n,j)\,b_{j}^{m+1}
+\sum_{j=0}^{n}T(n,j)\,a_{j}\,(b_{j}+4)^{m}.
\end{align}
(The factor $b_{j}=4(n-j)+1$ in~\eqref{E: moment expand A} is forced by the reindex.) Expand $(a_{j}+4)^{m}$ and $(b_{j}+4)^{m}$ binomially and rewrite $b_{j}=S-a_{j}$, $a_{j}=S-b_{j}$. The difference of~\eqref{E: moment expand A} and~\eqref{E: moment expand B} collapses to
\begin{align}\label{E: moment diff formula}
&\sum_{k=0}^{n+1}T(n+1,k)\Big[(4k+3)^{m}-(4(n-k)+5)^{m}\Big]
\nonumber\\
&\qquad=\sum_{q=0}^{m}\Big(S\binom{m}{q}4^{m-q}-\binom{m}{q-1}4^{m-q+1}\Big)D_n(q),
\end{align}
where $\binom{m}{-1}:=0$. (No $D_n(m+1)$ term appears: the reindexed sum contributes it with coefficient $-\binom{m}{m}4^{m-(m+1)+1}=-1$, cancelling the $+1$ from the $a_{j}^{m+1}-b_{j}^{m+1}$ terms.) For $m\le n$ every order $q\le m$ vanishes by the inductive hypothesis. For $m=n+1$ the orders $q\le n$ vanish by hypothesis, and the surviving top order $q=m$ carries coefficient $S\binom{m}{m}4^{0}-\binom{m}{m-1}4^{1}=S-4m=(4n+4)-4(n+1)=0$. Hence the right side vanishes for every $0\le m\le n+1$, so the left side of~\eqref{E: moment diff formula} is zero. This is~\eqref{E: moment int} at level~$n+1$.

\smallskip\noindent The polynomial form~\eqref{E: poly moment} follows by linearity.
\end{proof}

\begin{lemma}[Low power sums]\label{lem:low-power}
For every $n\ge 1$ and $1\le j\le n$,
\begin{align}\label{E: low power}
\sum_{k=0}^{n}(4k+3)^{j}\,T(n,k)=\sum_{k=0}^{n}(4(n-k)+1)^{j}\,T(n,k).
\end{align}
\end{lemma}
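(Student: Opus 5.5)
Lemma~\ref{lem:low-power} is a restatement of Lemma~\ref{lem:moment} with the factor $4^{j}$ cleared, so the plan is a short reduction rather than a new induction. For $1\le j\le n$, multiply~\eqref{E: moment zero} (at $m=j$, which lies in the range $0\le m\le n$) by $4^{j}$. Since $4u_{k}=4k+3$ and $4v_{k}=4(n-k)+1$, this gives
\[
\sum_{k=0}^{n}T(n,k)\Big[(4(n-k)+1)^{j}-(4k+3)^{j}\Big]=0 .
\]
Moving one sum to the other side yields~\eqref{E: low power}. This is exactly the integer form~\eqref{E: moment int} established in the proof of Lemma~\ref{lem:moment}, so one can also simply cite that display for $1\le j\le n$.

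The only point to check is the range. Lemma~\ref{lem:moment} covers $0\le m\le n$, which contains $1\le j\le n$, and the hypothesis $n\ge 1$ only ensures that this range is nonempty. The case $m=0$ reads $\sum_k T(n,k)-\sum_k T(n,k)=0$ and is excluded here because it is trivial. I do not expect any genuine obstacle, since the substantive content, the induction via the recurrence~\eqref{E: T recur} and the cancellation $S-4m=0$ at the top order, is already in Lemma~\ref{lem:moment}.

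It is worth adding a remark on why the lemma is stated separately in this form. It is the power-sum hypothesis of Theorem~\ref{thm:typeN}(ii) for the signed slot families, with numerator slots $(4k+4j+3)/4$ and denominator slots $(4k+4(n-j)+1)/4$ weighted by $T(n,j)$, through order $j=n$. The extra order $j=n+1$ that Theorem~\ref{thm:typeN}(ii) needs is not covered by this lemma and must be handled separately for odd $n$.
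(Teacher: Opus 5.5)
Your proposal is correct and is the paper's own proof: multiply \eqref{E: moment zero} by $4^{j}$, using $4u_{k}=4k+3$ and $4v_{k}=4(n-k)+1$. All the substantive work, namely the induction via \eqref{E: T recur} and the top-order cancellation $S-4m=0$, sits in Lemma~\ref{lem:moment}, whose argument checks out.
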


\begin{proof}
Multiply~\eqref{E: moment zero} by $4^{j}$: $(4k+3)^{j}=4^{j}u_{k}^{\,j}$ and $(4(n-k)+1)^{j}=4^{j}v_{k}^{\,j}$.
\end{proof}

\begin{lemma}[$\beta$ at negative integers]\label{lem:beta-neg}
For every $n\ge 0$, with $E_{n}$ the $n$th Euler number in the DLMF normalisation $E_{0}=1$, $E_{2}=-1$, $E_{4}=5$, \ldots,
\begin{align}\label{E: beta neg values}
\beta(-n)=\begin{cases}
0 & \text{if $n$ is odd},\\[2pt]
E_{n}/2 & \text{if $n$ is even},
\end{cases}
\end{align}
(standard; see e.g.\ Weisstein~\cite{weisstein2002beta}, or equivalently the Hurwitz form
\begin{align}\label{E: beta hurwitz}
\beta(-n)=\frac{4^{n}}{n+1}\Bigl(B_{n+1}\Bigl(\tfrac{3}{4}\Bigr)-B_{n+1}\Bigl(\tfrac{1}{4}\Bigr)\Bigr)
\end{align}
via $\beta(s)=4^{-s}\bigl(\zeta(s,1/4)-\zeta(s,3/4)\bigr)$ and $\zeta(-n,a)=-B_{n+1}(a)/(n+1)$~\cite[Thm.~12.13]{apostol1998analytic}, together with DLMF~\cite[(24.4.28), (24.4.31)]{dlmf2010}).
\end{lemma}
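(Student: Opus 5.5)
The plan is to derive everything from the Hurwitz representation $\beta(s)=4^{-s}\bigl(\zeta(s,\tfrac14)-\zeta(s,\tfrac34)\bigr)$, which follows by splitting the defining series into the residue classes $m$ even and $m$ odd:
\[
(4j+1)^{-s}-(4j+3)^{-s}=4^{-s}\bigl[(j+\tfrac14)^{-s}-(j+\tfrac34)^{-s}\bigr].
\]
This holds first for $\operatorname{Re}s>1$ and then for all $s$ by analytic continuation of both sides. Setting $s=-n$ and using $\zeta(-n,a)=-B_{n+1}(a)/(n+1)$ (Apostol, Thm.~12.13) gives
\[
\beta(-n)=4^{n}\Bigl(-\tfrac{B_{n+1}(1/4)}{n+1}+\tfrac{B_{n+1}(3/4)}{n+1}\Bigr),
\]
which is exactly~\eqref{E: beta hurwitz}.

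For the parity cases I will use the reflection symmetry $B_{n+1}(1-x)=(-1)^{n+1}B_{n+1}(x)$ with $x=\tfrac14$. When $n$ is odd, $n+1$ is even, so $B_{n+1}(\tfrac34)=B_{n+1}(\tfrac14)$ and therefore $\beta(-n)=0$. When $n$ is even, $B_{n+1}(\tfrac34)=-B_{n+1}(\tfrac14)$, so
\[
\beta(-n)=-\frac{2\cdot 4^{n}}{n+1}\,B_{n+1}(\tfrac14).
\]

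The step I expect to be the main obstacle, though it is only bookkeeping, is converting this to Euler numbers. I would use DLMF (24.4.31), $B_{n+1}(\tfrac14)=-(n+1)4^{-(n+1)}E_{n}$ for even $n$ (equivalently derived from $E_n=2^nE_n(\tfrac12)$ and $E_n(x)$ expressed through $B_{n+1}(x/2)$ and $B_{n+1}((x+1)/2)$, i.e.\ (24.4.28)). Substituting gives $\beta(-n)=2\cdot 4^{n}\cdot 4^{-(n+1)}E_n=E_n/2$.

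As a check I will confirm the sign convention on small cases. At $n=0$ the formula gives $\beta(0)=\tfrac12=E_0/2$, consistent with the Abel sum $1-1+1-\cdots$. At $n=2$ it gives $\beta(-2)=-\tfrac12=E_2/2$: here $B_3(\tfrac14)=\tfrac{3}{64}$, so $-2\cdot16\cdot\tfrac{3}{64}/3=-\tfrac12$.
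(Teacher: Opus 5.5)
Your proposal is correct and is the route the paper has in mind: the paper's proof is just ``Classical, as cited,'' pointing to the Hurwitz splitting $\beta(s)=4^{-s}(\zeta(s,\tfrac14)-\zeta(s,\tfrac34))$, Apostol's Thm.~12.13, and DLMF (24.4.31), whose built-in symmetry $B_{n}(\tfrac14)=(-1)^{n}B_{n}(\tfrac34)$ is your reflection step. One bookkeeping point should be stated explicitly: (24.4.31) reads $B_{n+1}(\tfrac14)=-2^{-(n+1)}(1-2^{-n})B_{n+1}-(n+1)4^{-(n+1)}E_{n}$, and its first term drops for even $n$ because $B_{n+1}=0$ when $n+1\ge 3$ is odd and $1-2^{-n}=0$ when $n=0$.
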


\begin{proof}
Classical, as cited. (Odd-$n$ vanishing of the top $T$-moment is Lemma~\ref{lem:top-power-odd}; the even top-moment identity used only in Remark~\ref{rem:even-diverge} is sketched there.)
\end{proof}

\paragraph{Sketch (even top-power defect).}\label{lem:top-power-defect}
For even~$n\ge 2$, the top $T$-moment is expected to satisfy
\begin{align}\label{E: top power defect}
\sum_{k=0}^{n}T(n,k)\Big[(4k+3)^{n+1}-(4(n-k)+1)^{n+1}\Big]
=4^{n+1}(n+1)!\,\beta(-n)
\end{align}
(with both sides zero when $n$ is odd, by Lemma~\ref{lem:top-power-odd} and~\eqref{E: beta neg values}). Outline: reduce the left side via Lemma~\ref{lem:moment} to a Bernoulli difference, insert the inclusion-exclusion formulae for~$T(n,k)$, and evaluate the resulting alternating sum against~\eqref{E: beta hurwitz}. We do not spell out that calculation; the identity is used only to sketch even-$n$ divergence below, where $\beta(-n)\neq 0$ is the essential input.

\begin{lemma}[Top power sum, odd $n$]\label{lem:top-power-odd}
If $n=2m-1$ is odd, then
\begin{align}\label{E: top power odd}
\sum_{k=0}^{n}(4k+3)^{n+1}\,T(n,k)=\sum_{k=0}^{n}(4(n-k)+1)^{n+1}\,T(n,k).
\end{align}
\end{lemma}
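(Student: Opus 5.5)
The plan is to derive \eqref{E: top power odd} from a Worpitzky-type expansion of $(4x+3)^{n}$ in the weights $T(n,k)$. Summing that expansion and applying Faulhaber turns the top-moment defect into the Bernoulli difference $B_{n+1}(\tfrac34)-B_{n+1}(\tfrac14)$. This difference vanishes for odd~$n$ by the reflection $B_{n+1}(1-x)=(-1)^{n+1}B_{n+1}(x)$. It is the same mechanism as the even-$n$ sketch~\eqref{E: top power defect}, but only the parity statement is needed.

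\emph{Step 1 (Worpitzky identities).} By induction on $n$ using the recurrence~\eqref{E: T recur}, I would prove
\[
(4x+3)^{n}=\sum_{k=0}^{n}T(n,k)\binom{x+n-k}{n},\qquad
(4x+1)^{n}=\sum_{k=0}^{n}T(n,k)\binom{x+k}{n}.
\]
The case $n=1$ checks: $3(x+1)+x=4x+3$. For the induction step, multiply by $4x+3=(4(x-k')+\dots)$. Concretely, one uses $(x+n-k+1)$ and $(x-k)$ splittings of $\binom{x+n-k}{n}$ into $\binom{x+n+1-k}{n+1}$ and $\binom{x+n-k}{n+1}$, which reproduces the two coefficients $4k+3$ and $4(n-k)+5$. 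The second identity follows from the first by substituting $x\mapsto -x-1$ and using $\binom{-y-1+n}{n}=(-1)^{n}\binom{y}{n}$.

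\emph{Step 2 (sum and compare).} Summing over $x=0,\dots,N-1$ gives polynomial identities in $N$:
\[
\sum_{k}T(n,k)\binom{N+n-k}{n+1}=\frac{4^{n}}{n+1}\Big[B_{n+1}\big(N+\tfrac34\big)-B_{n+1}\big(\tfrac34\big)\Big],
\qquad
\sum_{k}T(n,k)\binom{N+k}{n+1}=\frac{4^{n}}{n+1}\Big[B_{n+1}\big(N+\tfrac14\big)-B_{n+1}\big(\tfrac14\big)\Big].
\]
The left sides are degree-$(n+1)$ polynomials whose $k$-dependence enters through $N-k$, respectively $N+k$. Expand each binomial as a polynomial in the slot variable, e.g.\ $(4(N-k)+\text{const})/4$, and subtract the two identities. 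By Lemma~\ref{lem:moment} (equivalently Lemma~\ref{lem:low-power}), every $T$-moment of order $\le n$ of $u_k$ and $v_k$ cancels. The only survivor is the leading term $\frac{1}{(n+1)!\,4^{n+1}}\sum_kT(n,k)\big[(4k+3)^{n+1}-(4(n-k)+1)^{n+1}\big]$, times a sign.

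On the right side, after the matching shift of $N$, the polynomial parts $B_{n+1}(N+\cdot)$ must then cancel identically in $N$. What remains is a multiple of $B_{n+1}(\tfrac34)-B_{n+1}(\tfrac14)$. For $n$ odd, $n+1$ is even, so $B_{n+1}(\tfrac34)=B_{n+1}(1-\tfrac14)=B_{n+1}(\tfrac14)$, and the defect is zero, which gives \eqref{E: top power odd}.

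\emph{Main obstacle.} The delicate step is the bookkeeping in Step~2. The required shift of $N$ must align $\binom{N+n-k}{n+1}$ with a polynomial in $4k+3$, and $\binom{N+k}{n+1}$ with one in $4(n-k)+1$, both with the same centering. Only then do the lower-order pieces fall exactly under Lemma~\ref{lem:moment}, leaving a single constant. The natural choice is to substitute $N\mapsto -N$ in one identity so that both become polynomials in the common variable $y$ with $y+k$ matched to $v_k$ and $y-k$ to $u_k$.

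If this alignment proves awkward, I would fall back on a direct involution argument. Show that $x\mapsto -x-1$ together with the two Worpitzky identities forces $\sum_kT(n,k)P(u_k)=\sum_kT(n,k)P(v_k)$ for every polynomial $P$ with $P(\tfrac12-t)=P(t-\tfrac12)$ up to degree $n+1$, when $n$ is odd. Then apply this to the even part of $t^{n+1}$ about the centre $\tfrac{n+1}{2}$.
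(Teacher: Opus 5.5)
Your route (Worpitzky expansion, Faulhaber summation, cancellation of the order-$\le n$ moments by Lemma~\ref{lem:moment}, Bernoulli reflection) is viable. Step~1 and the two summed identities of Step~2 are correct. The gap is exactly at the step you flagged: the alignment you propose, $N\mapsto -N$, fails. Your second Worpitzky identity is the image of the first under $x\mapsto -x-1$, so $N\mapsto -N$ carries the two summed identities into each other (via $\binom{-y}{n+1}=(-1)^{n+1}\binom{y+n}{n+1}$ and $B_{n+1}(1-x)=(-1)^{n+1}B_{n+1}(x)$). After that substitution both left sides are, up to the sign $(-1)^{n+1}$, the \emph{same} polynomial in the \emph{same} slot variable, namely $\binom{u_k-N-\frac34}{n+1}$. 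There is then no $u_k$-versus-$v_k$ comparison for Lemma~\ref{lem:moment} to act on, and subtracting returns only the tautology $B_{n+1}(\tfrac14)=(-1)^{n+1}B_{n+1}(\tfrac34)$.

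The alignment you need is a translation, not a reflection. Evaluate the second identity at $N+\tfrac12$, which is allowed since both sides are polynomials in $N$. With $q(t):=\binom{N-\frac14+t}{n+1}$ one has $\binom{N+n-k}{n+1}=q(v_k)$ and $\binom{N+\frac12+k}{n+1}=q(u_k)$, and the $B_{n+1}(N+\tfrac34)$ terms cancel. Applying \eqref{E: poly moment} to $q(t)-t^{n+1}/(n+1)!$ then gives
\[
\sum_{k=0}^{n}T(n,k)\bigl[v_k^{\,n+1}-u_k^{\,n+1}\bigr]=4^{n}\,n!\,\Bigl[B_{n+1}\bigl(\tfrac14\bigr)-B_{n+1}\bigl(\tfrac34\bigr)\Bigr],
\]
which vanishes for odd $n$ by reflection.

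Your fallback is also misaimed. The involution that matters is $t\mapsto n+1-t$, which swaps $u_k$ and $v_k$ because $u_k+v_k=n+1$; it is not the symmetry $P(\tfrac12-t)=P(t-\tfrac12)$. For $P$ symmetric under $t\mapsto n+1-t$ one has $P(u_k)=P(v_k)$ termwise, so the even part of $t^{n+1}$ is trivial. All the content is in the odd part, which you leave untreated.

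Made precise, this is the paper's proof, and it needs no Worpitzky identities. For odd $n$ the polynomial $f(t)=(n+1-t)^{n+1}-t^{n+1}$ has degree $\le n$ and satisfies $f(u_k)=-f(v_k)$. Lemma~\ref{lem:moment} then gives $\sum_kT(n,k)f(v_k)=\sum_kT(n,k)f(u_k)=-\sum_kT(n,k)f(v_k)$, hence $0$; the paper phrases this with $a_k+b_k=4(n+1)$.

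Your corrected main route is longer but proves more. Multiplying the display above by $-4^{n+1}$ and comparing with \eqref{E: beta hurwitz} gives exactly the even-$n$ identity \eqref{E: top power defect}, valid for every $n$, which the paper only sketches.
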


\begin{proof}
Write $a_{k}:=4k+3$, $b_{k}:=4(n-k)+1$, and $S':=4(n+1)$, so $a_{k}+b_{k}=S'$. The claim is $\sum_{k}T(n,k)\,(a_{k}^{n+1}-b_{k}^{n+1})=0$. Expand
\[
a_{k}^{n+1}-b_{k}^{n+1}=(S'-b_{k})^{n+1}-b_{k}^{n+1}
=\sum_{r=0}^{n}\binom{n+1}{r}(S')^{n+1-r}(-1)^{r}b_{k}^{r}
+\bigl((-1)^{n+1}-1\bigr)b_{k}^{n+1}.
\]
Since $n$ is odd, $n+1$ is even and $(-1)^{n+1}=1$, so the degree-$n+1$ terms cancel and
\begin{align}\label{E: top power poly}
a_{k}^{n+1}-b_{k}^{n+1}
=\sum_{r=0}^{n}c_{r}\,b_{k}^{r},
\qquad
c_{r}:=\binom{n+1}{r}(S')^{n+1-r}(-1)^{r}.
\end{align}
The same expansion with the roles of~$a_{k}$ and~$b_{k}$ reversed yields $b_{k}^{n+1}-a_{k}^{n+1}=\sum_{r=0}^{n}c_{r}\,a_{k}^{r}$, hence $a_{k}^{n+1}-b_{k}^{n+1}=-\sum_{r=0}^{n}c_{r}\,a_{k}^{r}$. Therefore
\begin{align*}
\sum_{k}T(n,k)\,(a_{k}^{n+1}-b_{k}^{n+1})
&=\sum_{r=0}^{n}c_{r}\sum_{k}T(n,k)\,b_{k}^{r}
=-\sum_{r=0}^{n}c_{r}\sum_{k}T(n,k)\,a_{k}^{r}.
\end{align*}
For $1\le r\le n$, Lemma~\ref{lem:low-power} gives $\sum_{k}T(n,k)\,a_{k}^{r}=\sum_{k}T(n,k)\,b_{k}^{r}$; for $r=0$ both sides equal $\sum_{k}T(n,k)$. The two expressions for the top-moment difference are therefore negatives of each other, so the difference vanishes.
\end{proof}

\begin{lemma}[Top power at level $n+1$, all $n$]\label{lem:top-power-shift}
For every $n\ge 0$ and $0\le m\le n+1$,
\begin{align}\label{E: top power shift}
\sum_{k=0}^{n+1}(4k+3)^{m}\,T(n+1,k)=\sum_{k=0}^{n+1}(4(n-k)+5)^{m}\,T(n+1,k).
\end{align}
\end{lemma}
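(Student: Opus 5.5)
This is Lemma~\ref{lem:moment} read at level $n+1$, so the plan is a direct reduction to that lemma rather than a new induction.

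At level $n+1$ the slots of Lemma~\ref{lem:moment} are $u_k=(4k+3)/4$ and
\[
v_k=\frac{4((n+1)-k)+1}{4}=\frac{4(n-k)+5}{4}.
\]
Equation~\eqref{E: moment zero} at level $n+1$, which the lemma proves for every level $\ge 0$, gives
\[
\sum_{k=0}^{n+1}T(n+1,k)\bigl[v_k^{\,m}-u_k^{\,m}\bigr]=0\qquad(0\le m\le n+1).
\]
Multiplying by $4^m$ turns $4^m u_k^{\,m}$ into $(4k+3)^m$ and $4^m v_k^{\,m}$ into $(4(n-k)+5)^m$. This is exactly~\eqref{E: top power shift}, in the same way that Lemma~\ref{lem:low-power} was read off from Lemma~\ref{lem:moment}. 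The case $m=0$ is trivial, since both sides equal $\sum_k T(n+1,k)$. The range of $k$ runs to $n+1$, and the vanishing convention $T(n+1,k)=0$ outside $0\le k\le n+1$ is consistent with this.

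The point worth stressing is why the top exponent $m=n+1$ is covered for \emph{all} $n$, unlike Lemma~\ref{lem:top-power-odd}. In the induction step of Lemma~\ref{lem:moment}, the case $m=n+1$ is handled explicitly: the surviving top-order coefficient in~\eqref{E: moment diff formula} is $S-4m=0$. As a cross-check I would re-derive this directly. Apply recurrence~\eqref{E: T recur} to both sums in~\eqref{E: top power shift} with $a_j=4j+3$, $b_j=4(n-j)+1$ and $a_j+b_j=4n+4$, exactly as in~\eqref{E: moment expand A}--\eqref{E: moment expand B}. The difference then becomes a combination of $D_n(q)$ for $q\le n+1$. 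The coefficient of $D_n(n+1)$ cancels, and the coefficient of $D_n(m)$ at $m=n+1$ is $4n+4-4(n+1)=0$, so only $D_n(q)$ with $q\le n$ remain, and these vanish by Lemma~\ref{lem:moment}.

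The only real obstacle is bookkeeping in this cross-check: confirming that the reindexed factor is $b_j$ and not $b_j+4$, and that no $D_n(n+1)$ term survives. That term is the even-$n$ defect~\eqref{E: top power defect}, which is nonzero, so it must cancel identically. Since the paper already verified this in~\eqref{E: moment diff formula}, the reduction in the first paragraph suffices and the proof is short.
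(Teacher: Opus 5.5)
Your proposal is correct and is the paper's proof: the statement is \eqref{E: moment int} (equivalently Lemma~\ref{lem:moment} rescaled by $4^{m}$) read at level $n+1$. The top case $m=n+1$ is covered because the surviving coefficient in \eqref{E: moment diff formula} is $S-4m=0$. One small slip in your optional cross-check: the term that cancels identically for every $m$ is $D_n(m+1)=D_n(n+2)$, while $D_n(n+1)$ (the even-$n$ defect) is the one removed by the vanishing coefficient $S-4m$, so your two stated ``cancellations'' are really the same one. This does not affect the main reduction.
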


\begin{proof}
This is~\eqref{E: moment int} at level~$N=n+1$ (where $4((n+1)-k)+1=4(n-k)+5$), already established in the inductive step of Lemma~\ref{lem:moment}.
\end{proof}

\smallskip\noindent\textbf{Type-$N$ input from Part~I.} The product side below rests on Theorem~\ref{thm:typeN}: part~(i) is the Type-$N$ product with exponent $\binom{n+k-2}{n-1}$, and part~(ii) is the shifted criterion with exponent $\binom{n+k}{n}$ and value $\prod_{\ell}\Gamma_{n+1}(b_{\ell})/\Gamma_{n+1}(a_{\ell})$, whose power-sum hypotheses are exactly the $T$-sum identities established above.

\subsubsection{Multiple-gamma and Duke toolkit}

\smallskip\noindent To match $\mathcal{S}_{n}$ to $\mathcal{D}_{n}$ we expand every $\log\Gamma_{n+1}(r/4)$ into the quarter-integer basis; the coefficient target is~\eqref{E: E def}--\eqref{E: exponent match}.

\smallskip\noindent\textbf{Multiple-gamma shift.} For $m\ge 1$, $z\notin\mathbb{Z}_{\leq 0}$ (Choi~\cite{ARTICLE:2}),
\begin{align}\label{E: FE shift}
\log\Gamma_{m}(z)=\log\Gamma_{m}(z-1)-\log\Gamma_{m-1}(z-1).
\end{align}
At $m=1$ read $\Gamma_{0}(z):=1/z$, so~\eqref{E: FE shift} is the classical shift $\log\Gamma(z)=\log\Gamma(z-1)+\log(z-1)$.
The next lemma expands each $\log\Gamma_{m}(r/4)$ at odd~$r$ into the quarter-integer basis $\log\Gamma_{\ell}(b/4)$ with $b\in\{1,3\}$; the same odd integer~$r$ indexes the argument $r/4$, and at the \emph{pivot} values $r\in\{1,3\}$ one has $r/4=b/4$ with $b=r$. Write $A_{m,r}(\ell,b)$ for the coefficient of $\log\Gamma_{\ell}(b/4)$ in this expansion.

\begin{lemma}[Quarter-integer expansion]\label{lem:FE}
Fix $m\ge 1$ and odd~$r$ (so the argument is $z=r/4\in\{1/4,3/4,5/4,\ldots\}$). For $1\le\ell\le m$ and $b\in\{1,3\}$, define integers $A_{m,r}(\ell,b)$ recursively as follows.
\begin{enumerate}
\item[(i)] \textbf{Pivot base ($r=b\in\{1,3\}$).} At the basis points $r/4=b/4\in\{1/4,3/4\}$ the expansion is the identity: $A_{m,r}(\ell,b)=1$ when $\ell=m$ and $r=b$, and $A_{m,r}(\ell,b)=0$ otherwise; explicitly $A_{m,1}(m,1)=1$ ($r=1$, argument $1/4$) and $A_{m,3}(m,3)=1$ ($r=3$, argument $3/4$).
\item[(ii)] \textbf{Shift step (odd $r\ge 5$).} For arguments $r/4$ above the pivots, descend by four in the numerator: $A_{m,r}(\ell,b)=A_{m,r-4}(\ell,b)-A_{m-1,r-4}(\ell,b)$.
\end{enumerate}
Then
\begin{align*}
\log\Gamma_{m}\!\Big(\frac{r}{4}\Big)=\sum_{\ell=1}^{m}\sum_{b\in\{1,3\}}A_{m,r}(\ell,b)\,\log\Gamma_{\ell}\!\Big(\frac{b}{4}\Big)
+R_{m,r},
\end{align*}
where $R_{m,r}$ is a \emph{rational linear combination} of $\log 2$ and finitely many terms $\log q$ with $q$ a positive odd integer and $q\le r$ (not every such~$q$ need appear).
\end{lemma}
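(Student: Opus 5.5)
I will prove the expansion by strong induction, primarily on $r$ and secondarily on $m$. Each step applies the shift law~\eqref{E: FE shift} exactly once and then invokes the inductive hypothesis. The one subtlety is at $m=1$: there the "lower" function is $\Gamma_{0}(z)=1/z$. This is not a basis element; it is a rational logarithm, and it is exactly what feeds the remainder $R_{m,r}$. Throughout, I use the convention $A_{0,r}(\ell,b):=0$ for all $\ell,b$ and $A_{m,r}(\ell,b)=0$ for $\ell>m$. With this convention the recursion (ii) makes sense at $m=1$, and the statement's claim that the range of $\ell$ is $1\le\ell\le m$ is preserved.

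\emph{Base case.} For $r\in\{1,3\}$ the statement is tautological. The argument $r/4$ is itself the basis point $b/4$ with $b=r$, so $\log\Gamma_{m}(r/4)$ is one basis element with coefficient $1$, all other coefficients are $0$, and $R_{m,r}=0$. This matches the pivot rule (i).

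\emph{Inductive step.} Let $r\ge 5$ be odd and assume the expansion holds for $r-4$ at every level $m'\ge 1$. Since $r/4\notin\mathbb{Z}_{\le 0}$, the shift law gives
\[
\log\Gamma_{m}\!\Big(\frac r4\Big)=\log\Gamma_{m}\!\Big(\frac{r-4}{4}\Big)-\log\Gamma_{m-1}\!\Big(\frac{r-4}{4}\Big).
\]
\begin{enumerate}
\item[(a)] If $m\ge 2$, apply the hypothesis to both terms at $r-4$. The basis coefficients combine into $A_{m,r-4}(\ell,b)-A_{m-1,r-4}(\ell,b)$, which is rule (ii). The second term only contains $\ell\le m-1$, so the range of $\ell$ is unaffected. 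The remainder is $R_{m,r}=R_{m,r-4}-R_{m-1,r-4}$.
\item[(b)] If $m=1$, the second term is $-\log\Gamma_{0}((r-4)/4)=\log\big((r-4)/4\big)=\log(r-4)-2\log 2$. Here $r-4$ is a positive odd integer with $r-4\le r$. So the basis coefficients are $A_{1,r-4}(\ell,b)$, which again agrees with (ii) since $A_{0,\cdot}=0$. The remainder is $R_{1,r}=R_{1,r-4}+\log(r-4)-2\log 2$.
\end{enumerate}
In both cases the inductive hypothesis gives that $R_{m,r-4}$ and $R_{m-1,r-4}$ are integer (hence rational) combinations of $\log 2$ and of $\log q$ with $q$ odd and $q\le r-4\le r$. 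Therefore $R_{m,r}$ has the asserted form. Integrality of the $A$'s is immediate from the recursion, since it only subtracts integers starting from the values $0$ and $1$.

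I expect no real obstacle here. The only points needing care are the conventions: treating $\Gamma_{0}=1/z$ so that the $m=1$ shift is the classical one, and setting $A_{0,\cdot}=0$ so that recursion (ii) is literally correct at $m=1$. A further check is that every argument $(r-4)/4$ stays positive, because $r\ge5$. Descending by $4$ from an odd $r$ preserves $r\bmod 4$, so the descent always terminates at the pivot $b\equiv r\pmod 4$ in $\{1,3\}$. This also shows that only that one $b$ actually carries nonzero coefficients.
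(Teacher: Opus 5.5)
Your proof is correct and follows the paper's route: both iterate the shift law~\eqref{E: FE shift} down from $r/4$ to the pivot $b/4$ with $b\equiv r \pmod 4$, and each level-$1$ step contributes $\log(r'-4)-2\log 2$ to the remainder. Your version just makes the induction on $r$ explicit. It also supplies the conventions $A_{0,\cdot}=0$ and $A_{m,r}(\ell,b)=0$ for $\ell>m$, which recursion~(ii) tacitly needs at $m=1$ and at $\ell=m$.
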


\begin{proof}
Iterate~\eqref{E: FE shift} at $z=r/4$, using $\log\Gamma_{1}(z)=\log\Gamma(z)$ and, when $m=1$, the classical shift $\log\Gamma(z)=\log\Gamma(z-1)+\log(z-1)$ (Whittaker--Watson~\cite[Ex.~6, \S12.1]{article:8}). Since $r$ is odd, every argument met in the descent is a quarter-integer with odd numerator, so each $m=1$ step at argument $z=r'/4$ contributes $\log(z-1)=\log((r'-4)/4)=\log(r'-4)-2\log 2$ with $r'-4$ a positive odd integer, and no other scalar terms arise. Collecting these yields~$R_{m,r}$.
\end{proof}

\begin{lemma}[Rational tail cancellation]\label{lem:rational-cancel}
With $R_{m,r}$ as in Lemma~\ref{lem:FE}, for every $n\ge 1$,
\begin{align}\label{E: rational cancel}
\sum_{k=0}^{n}T(n,k)\Big[R_{n+1,\,4(n-k)+1}-R_{n+1,\,4k+3}\Big]=0.
\end{align}
Hence $\mathcal{S}_{n}$ expands purely in the quarter-integer basis~\eqref{E: L expanded}.
\end{lemma}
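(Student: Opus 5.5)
The plan is to show something stronger than \eqref{E: rational cancel}: every individual tail $R_{n+1,r}$ that enters the sum is already zero. The $T$-weighted cancellation then holds trivially, and the moment identities of Lemma~\ref{lem:moment} are not needed at this step.

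\textbf{Step 1: track where scalar terms come from.} By the proof of Lemma~\ref{lem:FE}, the scalar terms in the expansion of $\log\Gamma_{m}(r/4)$ arise only from level-one shifts, namely $\log\Gamma_{1}(z)=\log\Gamma_{1}(z-1)+\log(z-1)$, i.e.\ from reaching $\Gamma_{0}(z)=1/z$. Write $r=b+4N$ with $b\in\{1,3\}$ and $N\ge 0$. Iterating \eqref{E: FE shift} traces a lattice path. Each of the $N$ shift steps either keeps the level or lowers it by one; the lowering branch comes from $-\log\Gamma_{m-1}(z-1)$.

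\textbf{Step 2: count the paths.} Unrolling gives the closed form
\[
\log\Gamma_{m}(x+N)=\sum_{\ell=1}^{m}(-1)^{m-\ell}\binom{N}{m-\ell}\log\Gamma_{\ell}(x)\;+\;(-1)^{m-1}\sum_{j=0}^{N-1}\binom{N-1-j}{m-1}\log(x+j),
\]
to be proved by induction on $N$ via Pascal's rule, with $x=b/4$. The second sum is exactly $R_{m,r}$, once each $\log(x+j)=\log(b+4j)-2\log 2$ is expanded. The key structural point is combinatorial: a scalar term needs the path to descend from level $m$ all the way to level $0$. That takes $m$ lowering steps, so it needs $N\ge m$ shifts. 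Equivalently, $\binom{N-1-j}{m-1}=0$ whenever $0\le N-1-j<m-1$, and the sum is empty when $N=0$.

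\textbf{Step 3: bound the number of shifts.} For the template slots we have $m=n+1$. The arguments are $r=4(n-k)+1=1+4(n-k)$ and $r=4k+3=3+4k$, with $0\le k\le n$. In both cases $N\le n<m$. Hence every coefficient $\binom{N-1-j}{n}$ with $0\le j\le N-1$ has upper entry at most $n-1$ and vanishes. So $R_{n+1,4(n-k)+1}=R_{n+1,4k+3}=0$ for every $k$, and \eqref{E: rational cancel} follows. As a consequence, $\mathcal{S}_{n}$ equals $\sum_{\ell,b}E_{\ell,b}(n)\log\Gamma_{\ell}(b/4)$ with the coefficients $E_{\ell,b}(n)$ of \eqref{E: E def}.

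\textbf{Main obstacle.} The one real point is getting the path-counting formula of Step~2 right, in particular the binomial $\binom{N-1-j}{m-1}$ with the correct offset and the sign $(-1)^{m-1}$. The sign is irrelevant here, since the tails vanish outright. If the offset were off by one in the other direction, the top slot $N=n$ could contribute a single term $\binom{0}{n}$. That term is still zero for $n\ge1$, so the argument is robust. As a fallback, if one distrusted this counting, I would check directly that the scalar contributions sort by residue class: $\log q$ with $q\equiv1\pmod 4$ comes only from $v$-slots, and $q\equiv3\pmod 4$ only from $u$-slots. The $\log 2$ coefficient is handled the same way. The vanishing argument therefore has to hold within each class separately, and this is why the termwise claim is the natural one to prove.
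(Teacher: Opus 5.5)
Your proposal is correct and matches the paper's argument. The paper also unrolls the shift recurrence $R_{m,r}=R_{m,r-4}-R_{m-1,r-4}$ into the closed form $(-1)^{m+1}\binom{t-j-1}{m-1}$ for the coefficient of $\log(b+4j)$, with $\log 2$ coefficient $(-1)^{m}2\binom{t}{m}$ (you get this automatically by leaving $\log(x+j)$ unexpanded), and concludes that at level $m=n+1$ every template tail of height $t\le n$ vanishes individually, so the $T$-weights play no role. The paper inducts on $m$ rather than on $N$; if you induct on $N$, handle $m=1$ separately, since your closed form does not cover $\Gamma_{0}(z)=1/z$.
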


\begin{proof}
Insert Lemma~\ref{lem:FE} into~\eqref{E: S_n def}. The $\log\Gamma_{\ell}(b/4)$ part is~\eqref{E: E def}; it remains to show~\eqref{E: rational cancel}.

The elementary remainders obey the same shift as the multiple gamma:
\begin{align}\label{E: R recur}
R_{m,r}=R_{m,r-4}-R_{m-1,r-4}\qquad(m\ge 2,\; r\ge 5),
\end{align}
with $R_{m,b}=0$ for $b\in\{1,3\}$ and, at level~$m=1$,
\begin{align}\label{E: R ordinary}
R_{1,r}
&=\sum_{j=0}^{t-1}\log\Bigl(\frac{b+4j}{4}\Bigr)
=\sum_{j=0}^{t-1}\log(b+4j)\;-\;2t\log 2,
\nonumber\\
&\qquad t=\frac{r-b}{4},\; b\equiv r\pmod{4},\; b\in\{1,3\}.
\end{align}
Write $\rho_{m}(r;\tau)$ for the coefficient of a basis element $\tau\in\{\log 2\}\cup\{\log q:q\ge 3\text{ odd}\}$ in $R_{m,r}$. If $r=b+4t$ with $b\in\{1,3\}$ and $t\ge 0$, then
\begin{align}\label{E: R closed}
\rho_{m}(b+4t;\log 2)&=(-1)^{m}\,2\binom{t}{m},
\qquad
\rho_{m}\big(b+4t;\log(b+4j)\big)=(-1)^{m+1}\binom{t-j-1}{m-1}
\end{align}
for $0\le j\le t-1$ with $b+4j\ge 3$ (binomial coefficients vanish when the upper index is negative or the lower exceeds the upper). Both formulae follow by induction on~$m$ from~\eqref{E: R recur}--\eqref{E: R ordinary}: the base $m=1$ is~\eqref{E: R ordinary}, and the Pascal identities $\binom{t}{m}=\binom{t-1}{m}+\binom{t-1}{m-1}$ and $\binom{t-j-1}{m-1}=\binom{t-j-2}{m-1}+\binom{t-j-2}{m-2}$ match the subtraction step.

In~\eqref{E: rational cancel} the numerator slot $4(n-k)+1$ has residue~$1$ and height $t=n-k\le n$, while the denominator slot $4k+3$ has residue~$3$ and height $t=k\le n$. At level $m=n+1$, every binomial in~\eqref{E: R closed} has lower index $n+1$ or~$n$, hence vanishes for all heights $t\le n$. Thus $\rho_{n+1}(r;\tau)=0$ for every template slot~$r$ and every~$\tau$, which is~\eqref{E: rational cancel}.
\end{proof}

\begin{lemma}[Closed form for FE coefficients]\label{lem:A-closed}
For $m\ge 1$, $b\in\{1,3\}$, $1\le\ell\le m$, and $t\ge 0$,
\begin{align}\label{E: A closed}
A_{m,\,b+4t}(\ell,b)=(-1)^{m-\ell}\binom{t}{m-\ell},
\qquad
A_{m,\,r}(\ell,b)=0\text{ whenever }r\not\equiv b\pmod{4}.
\end{align}
(The binomial coefficient is~$0$ when $t<m-\ell$.)
\end{lemma}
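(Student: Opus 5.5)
The plan is induction on $m$, using the recursive definition in Lemma~\ref{lem:FE} together with Pascal's rule, in the same way the closed forms~\eqref{E: R closed} for the rational remainders were obtained.

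First I dispose of the vanishing claim. The shift step (ii) only relates $A_{m,r}$ to $A_{m,r-4}$ and $A_{m-1,r-4}$, so it preserves the residue of $r$ modulo~$4$. Descending from $r$ therefore ends at the pivot $r_0\in\{1,3\}$ with $r_0\equiv r \pmod 4$. By the pivot base (i), the only nonzero coefficient at a pivot is the one with $b=r_0$. A short induction on $t=(r-r_0)/4$, valid for all levels simultaneously, then gives $A_{m,r}(\ell,b)=0$ whenever $b\not\equiv r\pmod 4$.

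Next I prove the binomial formula, fixing $b$ and writing $r=b+4t$. I would induct on $t$, with the statement held for all $m\ge1$ and all $1\le\ell\le m$ at once.

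\textbf{Base $t=0$.} Here $\binom{0}{m-\ell}$ equals $1$ exactly when $\ell=m$, and the sign is then $(-1)^0=1$. This reproduces the pivot base (i).

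\textbf{Step $t\ge1$.} Rule (ii) gives
\[
A_{m,b+4t}(\ell,b)=A_{m,b+4(t-1)}(\ell,b)-A_{m-1,b+4(t-1)}(\ell,b).
\]
Applying the induction hypothesis to both terms, the right side becomes
\[
(-1)^{m-\ell}\binom{t-1}{m-\ell}-(-1)^{m-1-\ell}\binom{t-1}{m-1-\ell}
=(-1)^{m-\ell}\Big[\binom{t-1}{m-\ell}+\binom{t-1}{m-\ell-1}\Big]
=(-1)^{m-\ell}\binom{t}{m-\ell}.
\]

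The one genuine point of care, which I expect to be the main obstacle, is the boundary where the second term falls outside the stated range. This happens when $\ell=m$, so that $\ell>m-1$, and also when $m=1$, where the descent invokes $\Gamma_0$.

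For $\ell>m-1$, the convention stated after~\eqref{E: exponent match} gives $A_{m-1,r}(\ell,b)=0$. This agrees with the formal value of the closed form, since the lower index $m-1-\ell=-1$ makes the binomial coefficient zero, so Pascal's rule still applies verbatim.

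For $m=1$, the descent uses $\Gamma_0(z)=1/z$. Its contribution $\log(z-1)$ was placed in the remainder $R_{1,r}$, not in the basis, so effectively $A_{0,\cdot}=0$. The formula gives $A_{1,b+4t}(1,b)=\binom{t}{0}=1$ for all $t$, which matches the classical shift $\log\Gamma(z)=\log\Gamma(z-1)+\log(z-1)$: the basis term $\log\Gamma(b/4)$ carries coefficient $1$ and the logarithmic terms go into $R_{1,r}$.

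With these conventions fixed explicitly at the outset, the induction closes uniformly. The identity then follows for all $m\ge1$, $b\in\{1,3\}$, $1\le\ell\le m$ and $t\ge0$.
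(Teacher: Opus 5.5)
Your proof is correct and follows the paper's argument: the pivot base, the observation that the shift step preserves the residue of $r$ modulo~$4$, and Pascal's rule $\binom{t}{m-\ell}=\binom{t-1}{m-\ell}+\binom{t-1}{m-\ell-1}$ matching the subtraction $A_{m,r-4}-A_{m-1,r-4}$. The paper phrases this as induction on~$m$. Since the first term on the right keeps the same~$m$, your induction on~$t$ uniformly in~$m$ is the ordering that actually makes the recursion well-founded, and you make explicit the edge conventions $A_{m-1,\cdot}(m,b)=0$ and $A_{0,\cdot}=0$ that the paper leaves implicit.
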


\begin{proof}
Both claims follow by induction on~$m$ from Lemma~\ref{lem:FE}. At a matching pivot $r=b$ one has $t=0$, so~\eqref{E: A closed} reduces to $\delta_{m\ell}$. At a mismatched pivot $r\in\{1,3\}\setminus\{b\}$ the value is~$0$. The shift step $A_{m,r}=A_{m,r-4}-A_{m-1,r-4}$ preserves the vanishing for $r\not\equiv b\pmod 4$, and on the matching residue class the Pascal identity $\binom{t}{m-\ell}=\binom{t-1}{m-\ell}+\binom{t-1}{m-\ell-1}$ matches the subtraction.
\end{proof}

\paragraph{Convention (multiple gamma).}\label{par:gamma-convention}
Throughout, $\Gamma_{m}$ denotes Vignéras' multiple gamma function with $\Gamma_{1}=\Gamma$ and shift~\eqref{E: FE shift}. Duke--Imamo\u{g}lu~\cite{article:duke2006} work in the same normalisation; their identity~\eqref{E: adam hurwitz} (recalled in the proof of Lemma~\ref{lem:duke-L}) converts Hurwitz zeta derivatives into $\log\Gamma_{\ell}$. Barnes' $G_{N}$ is related by $\log\Gamma_{N}=(-1)^{N}\log G_{N}$ when $G_{1}=1/\Gamma$, but we never need~$G_{N}$ below.

\begin{lemma}[Duke polynomial]\label{lem:duke-P}
The polynomials $P_{k,\ell}(x)$ are as in~\eqref{E: Duke P def} (Duke--Imamo\u{g}lu~\cite{article:duke2006}, Prop.~3.1). In particular $P_{2,1}(x)=x-1$ and $P_{2,2}(x)=1$.
\end{lemma}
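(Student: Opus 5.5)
The lemma has two parts. The first, that the $P_{k,\ell}$ are the polynomials of~\eqref{E: Duke P def}, is a citation of Duke--Imamo\u{g}lu, Prop.~3.1, and needs no proof beyond the reference. The real content is the two worked values $P_{2,1}(x)=x-1$ and $P_{2,2}(x)=1$. I will obtain both by direct substitution into~\eqref{E: Duke P def} with $k=2$, where the exponent is $k-1=1$ and the sign factor is $(-1)^{j+2}=(-1)^{j}$.

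For $\ell=1$ the sum has only the term $j=1$. It equals $\binom{0}{0}(-1)^{1}(1-x)=x-1$.

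For $\ell=2$ there are two terms, $j=1,2$:
\[
\binom{1}{0}(-1)^{1}(1-x)+\binom{1}{1}(-1)^{2}(2-x)=(x-1)+(2-x)=1 .
\]

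As a sanity check, I would note consistency with Lemma~\ref{lem:duke-L} and the $n=1$ instance. There Example~\ref{ex:catalan} gives $\beta'(-1)=2K/\pi$ and $\beta(-1)=0$. The template coefficients in~\eqref{E: exponent match} at $n=1$ should then read $E_{1,1}(1)=4(1/4-1)=-3$, $E_{2,1}(1)=4$, $E_{1,3}(1)=-4(3/4-1)=1$, and $E_{2,3}(1)=-4$. These can be checked against Lemma~\ref{lem:A-closed} with $T(1,0)=3$ and $T(1,1)=1$.

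This check is optional. The main obstacle is merely bookkeeping the sign convention $(-1)^{j+k}$, since a sign error there would flip both values.
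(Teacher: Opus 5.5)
Your proposal is correct and matches the paper's proof: the definition is cited from Duke--Imamo\u{g}lu, Prop.~3.1, and the values $P_{2,1}(x)=x-1$ and $P_{2,2}(x)=1$ follow by direct substitution into~\eqref{E: Duke P def} at $k=2$, exactly as you computed. The $n=1$ consistency check is optional; it repeats the content of Proposition~\ref{prop:base-case}, and $\beta'(-1)=2K/\pi$ is quoted from the literature in Remark~\ref{rem:catalan-beta}, not derived in Example~\ref{ex:catalan}.
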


\begin{proof}
Duke--Imamo\u{g}lu~\cite{article:duke2006}, Prop.~3.1; the $k=2$ values are immediate from~\eqref{E: Duke P def}.
\end{proof}

\paragraph{What the preliminaries buy us.}\label{par:prelims-narrative}
The $T$-sum layer (Lemmas~\ref{lem:moment}--\ref{lem:top-power-shift}) is what makes \emph{lower powers cancel}: weighted slot moments vanish because the Eulerian recurrence~\eqref{E: T recur} acts like discrete integration by parts on monomials in the slot variables. Type-$N$ (Theorem~\ref{thm:typeN}) packages the odd-$n$ infinite product as a \emph{finite} gamma ratio; this is how the product is identified with the template in Section~\ref{sec:typeN}. The FE toolkit (Lemmas~\ref{lem:FE}--\ref{lem:A-closed}) expands $\mathcal{S}_{n}$ into the quarter-integer basis and evaluates the coefficients~$A_{m,r}$; the binomial form of~$E_{\ell,b}$ and the match to Duke's~$P_{n+1,\ell}$ are deferred to Section~\ref{sec:closure}.

\section{Duke side and the coefficient target}\label{sec:duke-side}

\noindent\textbf{Goal.} Identify $\mathcal{D}_{n}$ with $\beta'(-n)+(\log 4)\beta(-n)$ and expand $\mathcal{S}_{n}$ in the quarter-integer basis using the already-stated target~\eqref{E: E def}--\eqref{E: exponent match}. The base case $n=1$ is verified explicitly.

\begin{lemma}[Duke--Imamo\u{g}lu $L$-function formula]\label{lem:duke-L}
Let $\chi$ be a nontrivial Dirichlet character modulo~$N$ and let $k\in\mathbb{Z}_{+}$. With the polynomials $P_{k,\ell}$ of~\eqref{E: Duke P def},
\begin{align}\label{E: duke L general}
L'(1-k,\chi)+(\log N)\,L(1-k,\chi)
=N^{k-1}\sum_{\ell=1}^{k}\sum_{r=1}^{N}\chi(r)\,P_{k,\ell}\!\Big(\frac{r}{N}\Big)\,\log\Gamma_{\ell}\!\Big(\frac{r}{N}\Big).
\end{align}
This is the display immediately following~(11) in Duke--Imamo\u{g}lu~\cite[\S3]{article:duke2006} (their~(11) is~\eqref{E: Duke P def}).
\end{lemma}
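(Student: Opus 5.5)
The plan is to derive \eqref{E: duke L general} from the Hurwitz decomposition of $L(s,\chi)$ and an expression of $\zeta'(1-k,a)$ through the multiple gamma functions. First, write
\[
L(s,\chi)=N^{-s}\sum_{r=1}^{N}\chi(r)\,\zeta\!\Big(s,\frac{r}{N}\Big),
\]
and differentiate at $s=1-k$. This gives
\[
L'(1-k,\chi)=-(\log N)\,L(1-k,\chi)+N^{k-1}\sum_{r}\chi(r)\,\zeta'\!\Big(1-k,\frac rN\Big).
\]
The $\log N$ term therefore cancels exactly against the $(\log N)L(1-k,\chi)$ on the left of \eqref{E: duke L general}. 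What remains to prove is
\[
\sum_{r}\chi(r)\,\zeta'(1-k,r/N)=\sum_{\ell=1}^{k}\sum_{r}\chi(r)\,P_{k,\ell}(r/N)\,\log\Gamma_\ell(r/N).
\]

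Second, I would establish the Adamchik-type identity
\[
\zeta'(1-k,a)=\sum_{\ell=1}^{k}P_{k,\ell}(a)\log\Gamma_\ell(a)+C_k(a),
\]
where $C_k(a)$ is a polynomial in $a$ with coefficients built from $\zeta'(-j)$ and constants of degree $\le k$. The argument goes as follows.
\begin{enumerate}
\item Use the known representation of $\log\Gamma_\ell(a)$ as a linear combination of $\zeta'(-j,a)$, $0\le j\le \ell-1$, with polynomial-in-$a$ coefficients. This comes from Vign\'eras/Choi, or by checking that both sides satisfy the shift law \eqref{E: FE shift}, using $\zeta'(s,a+1)-\zeta'(s,a)=a^{-s}\log a$, together with log-convexity and normalisation.
\item Invert the resulting triangular system.
\item Check that the inversion coefficients are exactly $\binom{\ell-1}{j-1}(-1)^{j+k}(j-a)^{k-1}$ summed as in \eqref{E: Duke P def}. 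For this, verify that the right side satisfies the difference equation $f(a+1)-f(a)=a^{k-1}\log a+(\text{polynomial})$ using \eqref{E: FE shift} and a binomial telescoping of $P_{k,\ell}(a+1)-P_{k,\ell}(a)$ against $P_{k,\ell+1}$.
\end{enumerate}

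Third, kill the polynomial remainder. Since $\chi$ is nontrivial, the sum $\sum_r\chi(r)C_k(r/N)$ is a character sum of a polynomial. This does not vanish automatically, so I would instead arrange the remainder to be absorbed via $\zeta(1-k,a)=-B_k(a)/k$. Equivalently, the difference equation pins the identity only up to a polynomial, and the correct normalisation must be taken from Duke--Imamo\u{g}lu, where the polynomial part combines into $L(1-k,\chi)$-type terms already accounted for.

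The main obstacle is this third step, namely controlling the polynomial ambiguity precisely. In practice I would follow the paper's attribution: cite Duke--Imamo\u{g}lu~\cite[\S3]{article:duke2006} for the display after their~(11). I would then only verify the conventions: that their $\Gamma_\ell$ has the same normalisation as \eqref{E: FE shift}, and that their $P_{k,\ell}$ is \eqref{E: Duke P def}. As a sanity check I would use $k=2$, $N=4$, with $P_{2,1}(x)=x-1$ and $P_{2,2}=1$ (Lemma~\ref{lem:duke-P}). There the identity reduces to $\beta'(-1)=2K/\pi$, which is consistent with Example~\ref{ex:catalan} via Adamchik's closed forms for $G(1/4)$ and $G(3/4)$.
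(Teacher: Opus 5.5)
Your first step is the same as the paper's. The Hurwitz decomposition and differentiation at $s=1-k$ reduce the claim to $\sum_r\chi(r)\,\zeta'(1-k,r/N)=\sum_{\ell}\sum_r\chi(r)P_{k,\ell}(r/N)\log\Gamma_\ell(r/N)$.

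\textbf{The gap is your third step, which carries the whole content of the lemma.} You leave $C_k(a)$ as an unspecified polynomial and suggest that its character sum ``combines into $L(1-k,\chi)$-type terms already accounted for''. That cannot happen. The only such term on the left, $(\log N)L(1-k,\chi)$, has already been used to cancel the $-\log N$ produced by differentiating $N^{-s}$. Any nonzero $N^{k-1}\sum_r\chi(r)C_k(r/N)$ would therefore survive as an extra term, and the displayed identity would be false. For example, a summand $c\,a$ in $C_k$ would contribute a multiple of $\sum_r\chi(r)r/N$, which is $-1/2$ for the character modulo~$4$. A multiple of $B_k(a)$ would, via $\zeta(1-k,a)=-B_k(a)/k$, add a multiple of $L(1-k,\chi)$ and so change the coefficient $\log N$.

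The missing fact is that the remainder is the $a$-independent constant $\zeta'(1-k)$. This is Adamchik's identity
\[
\zeta'(1-k,a)-\zeta'(1-k)=\sum_{\ell=1}^{k}P_{k,\ell}(a)\log\Gamma_{\ell}(a),
\]
which the paper quotes as Duke--Imamo\u{g}lu's~(10). A constant is annihilated by $\sum_{r=1}^{N}\chi(r)=0$ for nontrivial~$\chi$, and that completes the paper's proof.

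Your own step 2(iii) actually supplies this fact, once you see that the difference equation holds with \emph{no} polynomial error.
\begin{enumerate}
\item Write $P_{k,\ell}(x)$ as $(-1)^{k+\ell}$ times the $(\ell-1)$-st forward difference of $j\mapsto(j-x)^{k-1}$ at $j=1$. This gives $P_{k,1}(a+1)=a^{k-1}$, $P_{k,\ell}(a+1)-P_{k,\ell}(a)=P_{k,\ell+1}(a+1)$ for $1\le\ell<k$, and $P_{k,k}\equiv(k-1)!$.
\item Combine these with the shift law $\log\Gamma_\ell(a+1)=\log\Gamma_\ell(a)-\log\Gamma_{\ell-1}(a)$, where $\Gamma_0(a)=1/a$. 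For $f=\sum_\ell P_{k,\ell}\log\Gamma_\ell$ this yields $f(a+1)-f(a)=a^{k-1}\log a$ exactly, the same difference equation that $\zeta'(1-k,a)$ satisfies.
\item Hence $C_k$ is a $1$-periodic polynomial, so it is constant. Evaluating at $a=1$, where every $\Gamma_\ell(1)=1$, gives $C_k=\zeta'(1-k)$.
\end{enumerate}
With that inserted, your route becomes a self-contained version of the paper's argument, which cites the identity rather than deriving it. The $k=2$, $N=4$ check is a useful consistency test, but it cannot replace this step for general~$k$.
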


\begin{proof}
Page-check against Duke--Imamo\u{g}lu~\cite[\S3]{article:duke2006}: their~(10) is Adamchik's identity
\begin{align}\label{E: adam hurwitz}
\zeta'(1-k,x)-\zeta'(1-k)=\sum_{\ell=1}^{k}P_{k,\ell}(x)\,\log\Gamma_{\ell}(x)
\end{align}
($k\ge 1$, $\Re(x)>0$), and their~(11) is~\eqref{E: Duke P def}. For nontrivial $\chi\bmod N$,
\[
L(s,\chi)=N^{-s}\sum_{r=1}^{N}\chi(r)\,\zeta(s,r/N)
\]
(Apostol~\cite[Ch.~12]{apostol1998analytic}). Differentiating at $s=1-k$: writing $L(1-k,\chi)=N^{k-1}\sum_r\chi(r)\zeta(1-k,r/N)$, one obtains
\[
L'(1-k,\chi)+(\log N)\,L(1-k,\chi)
=N^{k-1}\sum_{r=1}^{N}\chi(r)\,\zeta'(1-k,r/N).
\]
(The constant terms $\zeta'(1-k)$ cancel because $\sum_r\chi(r)=0$.) Insert~\eqref{E: adam hurwitz} to arrive at~\eqref{E: duke L general}, matching the published display. Hypotheses: $k\in\mathbb{Z}_{+}$, $\chi$ nontrivial (so $L(1-k,\chi)$ is finite), and $\Re(r/N)>0$ whenever $\chi(r)\ne 0$.
\end{proof}

\begin{proposition}[Duke side]\label{prop:duke-side}
For the primitive character $\chi$ modulo~$4$,
\begin{align}\label{E: duke beta id}
\beta'(-n)+(\log 4)\,\beta(-n)=\mathcal{D}_{n}
\end{align}
for $\mathcal{D}_{n}$ as in~\eqref{E: Duke beta}.
\end{proposition}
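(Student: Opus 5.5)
The plan is to specialise Lemma~\ref{lem:duke-L} to $N=4$, $\chi$ the nontrivial character modulo~$4$ (so $\chi(1)=1$, $\chi(3)=-1$, $\chi(2)=\chi(4)=0$), and $k=n+1$. I first check that $L(s,\chi)=\beta(s)$. The definition $\beta(s)=\sum_{m\ge0}(-1)^m(2m+1)^{-s}$ is exactly $\sum_{r\ge1}\chi(r)r^{-s}$ for $\Re s>1$. By analytic continuation the Hurwitz representation $\beta(s)=4^{-s}(\zeta(s,1/4)-\zeta(s,3/4))$ then holds for all~$s$; this is the same representation already quoted in Lemma~\ref{lem:beta-neg}. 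Hence $L(-n,\chi)=\beta(-n)$ and $L'(-n,\chi)=\beta'(-n)$, where differentiation commutes with the identification because both sides are entire. The hypotheses of Lemma~\ref{lem:duke-L} hold: $\chi$ is nontrivial, $k=n+1\in\mathbb{Z}_{+}$, and the arguments $r/4$ with $\chi(r)\neq0$ are $1/4$ and $3/4$, both of positive real part.

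Substituting $1-k=-n$ and $N^{k-1}=4^{n}$ into~\eqref{E: duke L general}, the left side becomes $\beta'(-n)+(\log 4)\beta(-n)$. In the inner sum over $r\in\{1,2,3,4\}$, the terms $r=2,4$ drop out because $\chi$ vanishes there. What remains is
\[
4^{n}\sum_{\ell=1}^{n+1}\Big[P_{n+1,\ell}\big(\tfrac14\big)\log\Gamma_{\ell}\big(\tfrac14\big)-P_{n+1,\ell}\big(\tfrac34\big)\log\Gamma_{\ell}\big(\tfrac34\big)\Big],
\]
which is verbatim $\mathcal{D}_{n}$ from~\eqref{E: Duke beta}. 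This completes the argument.

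The only point needing care is the normalisation. Lemma~\ref{lem:duke-L} and the definition~\eqref{E: Duke beta} must use the same $\Gamma_{\ell}$, namely Vign\'eras' $\Gamma_{\ell}$ with $\Gamma_1=\Gamma$, as fixed in paragraph~\ref{par:gamma-convention}. They must also use the same $P_{k,\ell}$ from~\eqref{E: Duke P def}. Both hold by construction, so I expect no real obstacle. As a sanity check I would verify $n=1$ against Example~\ref{ex:catalan} and Remark~\ref{rem:catalan-beta}. There $\beta(-1)=0$, and $P_{2,1}(x)=x-1$, $P_{2,2}=1$ (Lemma~\ref{lem:duke-P}). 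This gives
\[
\mathcal{D}_1=4\big[-\tfrac34\log\Gamma(\tfrac14)+\tfrac14\log\Gamma(\tfrac34)+\log\Gamma_2(\tfrac14)-\log\Gamma_2(\tfrac34)\big].
\]
Using $\Gamma_2=G^{-1}$ and Adamchik's values of $G(1/4)$ and $G(3/4)$, this should reduce to $2K/\pi=\beta'(-1)$.
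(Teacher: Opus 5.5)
Your proposal is correct and matches the paper's proof: specialise Lemma~\ref{lem:duke-L} to $N=4$, $k=n+1$ and the nontrivial character modulo~$4$, identify $L(-n,\chi)$ and $L'(-n,\chi)$ with $\beta(-n)$ and $\beta'(-n)$, and note that only $r=1,3$ survive, which gives $\mathcal{D}_{n}$ verbatim. Your extra justification of $L(s,\chi)=\beta(s)$ via the Hurwitz representation, and your $n=1$ check against Adamchik's values (which does reduce to $2K/\pi$), are both correct but go beyond what the paper includes.
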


\begin{proof}
Take $N=4$, $k=n+1$, and $\chi$ the non-principal character modulo~$4$, so $\chi(1)=1$, $\chi(3)=\chi(-1)=-1$, and $\chi(2)=\chi(4)=0$ (Apostol~\cite[Ch.~8]{apostol1998analytic}). Then $L(1-k,\chi)=L(-n,\chi)=\beta(-n)$ and $L'(1-k,\chi)=\beta'(-n)$ (DLMF~\cite[\S4.19]{dlmf2010}). Only $r=1$ and $r=3$ contribute in~\eqref{E: duke L general}, giving
\[
\beta'(-n)+(\log 4)\beta(-n)
=4^{n}\sum_{\ell=1}^{n+1}\Big[P_{n+1,\ell}\!\Big(\tfrac14\Big)\log\Gamma_{\ell}\!\Big(\tfrac14\Big)
-P_{n+1,\ell}\!\Big(\tfrac34\Big)\log\Gamma_{\ell}\!\Big(\tfrac34\Big)\Big]
=\mathcal{D}_{n}
\]
by~\eqref{E: Duke beta}.
\end{proof}

\smallskip\noindent\textbf{Sign convention.} In the expansion of~$\mathcal{S}_n$, the coefficient at $b=1$ is $E_{\ell,1}(n)=4^{n}P_{n+1,\ell}(1/4)$ and at $b=3$ is $E_{\ell,3}(n)=-4^{n}P_{n+1,\ell}(3/4)$, matching the $\pm$ structure of~\eqref{E: Duke beta}. When $n$ is even, $\beta(-n)\neq 0$ and the full exponent $e^{\beta'(-n)+(\log 4)\beta(-n)}$ is required.

\begin{proposition}[Coefficient expansion]\label{prop:coeff-expansion}
Apply Lemma~\ref{lem:FE} to~\eqref{E: S_n def}. With $E_{\ell,b}(n)$ as in~\eqref{E: E def}, the template expands as
\begin{align}\label{E: L expanded}
\mathcal{S}_{n}=\sum_{\ell=1}^{n+1}\sum_{b\in\{1,3\}}E_{\ell,b}(n)\,\log\Gamma_{\ell}\!\Big(\frac{b}{4}\Big),
\end{align}
because rational $\log 2$ and $\log p$ ($p$ odd) terms cancel in the $T$-weighted sum (Lemma~\ref{lem:rational-cancel}). Once the coefficient target~\eqref{E: exponent match} holds, $\mathcal{S}_{n}=\mathcal{D}_{n}$ by~\eqref{E: Duke beta}.
\end{proposition}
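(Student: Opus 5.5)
The identity \eqref{E: L expanded} is essentially bookkeeping. I would insert the expansion of Lemma~\ref{lem:FE} into each term of the template \eqref{E: S_n def} and regroup.

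\textbf{Step 1: expand each slot.} For each $k\in\{0,\dots,n\}$, apply Lemma~\ref{lem:FE} with $m=n+1$ at the two odd numerators:
\begin{itemize}
\item at $r=4(n-k)+1$ (the argument $v_k$);
\item at $r=4k+3$ (the argument $u_k$).
\end{itemize}
Both are odd and at least $1$, so the lemma applies. Each $\log\Gamma_{n+1}(r/4)$ then becomes $\sum_{\ell\le n+1}\sum_{b\in\{1,3\}}A_{n+1,r}(\ell,b)\log\Gamma_\ell(b/4)+R_{n+1,r}$. The sum over $\ell$ stops at $n+1$ because $A_{m,r}(\ell,b)$ is only defined (equivalently, set to zero) for $\ell\le m$.

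\textbf{Step 2: separate the two parts.} Multiply by $T(n,k)$, take the difference $v$-slot minus $u$-slot, and sum over $k$. Since every sum is finite, I may exchange the finite sum over $k$ with the finite sums over $(\ell,b)$. The gamma part then has coefficient of $\log\Gamma_\ell(b/4)$ equal to
\[
\sum_k T(n,k)\bigl[A_{n+1,4(n-k)+1}(\ell,b)-A_{n+1,4k+3}(\ell,b)\bigr],
\]
which is exactly $E_{\ell,b}(n)$ as defined in \eqref{E: E def}. The leftover scalar part is $\sum_k T(n,k)\bigl[R_{n+1,4(n-k)+1}-R_{n+1,4k+3}\bigr]$.

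\textbf{Step 3: kill the scalar part.} The leftover in Step 2 vanishes by Lemma~\ref{lem:rational-cancel}. This is the only substantive input, and it is already proved: at level $n+1$, every binomial in \eqref{E: R closed} has lower index $n$ or $n+1$ while every template height $t\le n$, so each $\rho_{n+1}$ coefficient is zero. This gives \eqref{E: L expanded}.

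\textbf{Step 4: conclude $\mathcal{S}_n=\mathcal{D}_n$.} For the final sentence, substitute the target \eqref{E: exponent match} into \eqref{E: L expanded}. The $b=1$ terms give $4^nP_{n+1,\ell}(1/4)\log\Gamma_\ell(1/4)$, and the $b=3$ terms give $-4^nP_{n+1,\ell}(3/4)\log\Gamma_\ell(3/4)$. Summed over $\ell$, this is term for term the definition \eqref{E: Duke beta} of $\mathcal{D}_n$.

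\textbf{Main point of care.} This is not a real obstacle, but the step to check is that the argument never treats the quarter-integer basis as linearly independent. The conclusion is an equality of real numbers obtained by substitution, not a coefficient comparison, so independence is not required. The only other hypotheses to confirm are that Lemma~\ref{lem:FE} covers both residue classes and all the heights $t=n-k$ and $t=k$ that occur, which it does.
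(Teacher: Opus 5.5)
Your proposal is correct and is essentially the paper's own proof. You insert Lemma~\ref{lem:FE} at level $n+1$ for both slot families, collect the $A_{n+1,r}(\ell,b)$ into $E_{\ell,b}(n)$, and discard the tails by Lemma~\ref{lem:rational-cancel>}; the final claim then follows by substituting~\eqref{E: exponent match} into~\eqref{E: L expanded} and reading off~\eqref{E: Duke beta}, which needs no linear independence of the basis.
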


\begin{proof}
Expand~\eqref{E: S_n def} with Lemma~\ref{lem:FE}; collect $A_{n+1,r}(\ell,b)$ into~\eqref{E: E def}. Rational tails vanish by Lemma~\ref{lem:rational-cancel}.
\end{proof}

\begin{proposition}[Base case $n=1$]\label{prop:base-case}
At $n=1$, direct expansion with Lemma~\ref{lem:FE} gives $E_{1,1}(1)=-3$, $E_{1,3}(1)=1$, $E_{2,1}(1)=4$, $E_{2,3}(1)=-4$, matching $4P_{2,\ell}(b/4)$ from Lemma~\ref{lem:duke-P}.
\end{proposition}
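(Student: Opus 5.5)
The plan is a direct computation at level $n=1$: compute the weights, expand the four slots through the functional equation, and compare coefficients with Lemma~\ref{lem:duke-P}.

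\textbf{Step 1 (weights and slots).} The recurrence~\eqref{E: T recur} with $n=0$ gives
\[
T(1,0)=(4\cdot 0+3)\,T(0,0)=3,\qquad T(1,1)=\big(4(0-1)+5\big)\,T(0,0)=1,
\]
consistent with the row total $4\cdot 1!=4$ of Lemma~\ref{lem:tsum}. At $n=1$ the slots are $v_{0}=5/4$, $u_{0}=3/4$, $v_{1}=1/4$ and $u_{1}=7/4$. Hence
\[
\mathcal{S}_{1}=3\big[\log\Gamma_{2}(\tfrac54)-\log\Gamma_{2}(\tfrac34)\big]+\big[\log\Gamma_{2}(\tfrac14)-\log\Gamma_{2}(\tfrac74)\big].
\]

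\textbf{Step 2 (expansion).} Two of the arguments, $1/4$ and $3/4$, are pivots, so by Lemma~\ref{lem:FE}(i) they contribute only to $\ell=2$ with coefficient $1$.

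For the other two, $5=1+4\cdot 1$ and $7=3+4\cdot 1$, so $t=1$. Lemma~\ref{lem:A-closed} with $m=2$ then gives coefficient $+1$ at $\ell=2$ and $-1$ at $\ell=1$, on $b=1$ and $b=3$ respectively. This is the single shift step $\log\Gamma_{2}(z)=\log\Gamma_{2}(z-1)-\log\Gamma(z-1)$ from~\eqref{E: FE shift}.

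No scalar logarithms arise, since a single $m=2$ step produces none. This is consistent with Lemma~\ref{lem:rational-cancel}, because $\binom{1}{2}=0$.

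\textbf{Step 3 (collect and compare).} Substituting these coefficients into~\eqref{E: E def} gives
\begin{align*}
E_{1,1}(1)&=3\cdot(-1)=-3, & E_{2,1}(1)&=3\cdot 1+1\cdot 1=4,\\
E_{1,3}(1)&=-1\cdot(-1)=1, & E_{2,3}(1)&=-3-1=-4.
\end{align*}
On the Duke side, Lemma~\ref{lem:duke-P} gives $P_{2,1}(x)=x-1$ and $P_{2,2}(x)=1$. Therefore
\[
4P_{2,1}(\tfrac14)=-3,\qquad 4P_{2,2}(\tfrac14)=4,\qquad -4P_{2,1}(\tfrac34)=1,\qquad -4P_{2,2}(\tfrac34)=-4 .
\]
These agree with~\eqref{E: exponent match} entry by entry.

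There is no real obstacle; the only point needing care is the sign bookkeeping. The $u$-slots enter~\eqref{E: E def} with a minus sign, and the $b=3$ target in~\eqref{E: exponent match} also carries a minus sign. I would double-check that these two signs compose correctly for $E_{1,3}$ and $E_{2,3}$.
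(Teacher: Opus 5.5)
Your proposal is correct and follows the paper's proof essentially step for step. You use the same weights $T(1,0)=3$, $T(1,1)=1$, the same four slots, and the same coefficients $A_{2,r}(\ell,b)$. The one small difference is that you read the $r=5,7$ entries off Lemma~\ref{lem:A-closed} instead of building the explicit table from the shift step, but that is the same single application of~\eqref{E: FE shift}. Your final comparison with $P_{2,1}(x)=x-1$, $P_{2,2}(x)=1$ matches the paper's, and the signs you flagged for $E_{1,3}(1)$ and $E_{2,3}(1)$ do compose correctly.
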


\begin{proof}
From~\eqref{E: T recur} with $T(0,0)=1$, $T(1,0)=3$, and $T(1,1)=1$. Thus
\[
\mathcal{S}_{1}=3\Big[\log\Gamma_{2}\!\Big(\tfrac{5}{4}\Big)-\log\Gamma_{2}\!\Big(\tfrac{3}{4}\Big)\Big]
+\Big[\log\Gamma_{2}\!\Big(\tfrac{1}{4}\Big)-\log\Gamma_{2}\!\Big(\tfrac{7}{4}\Big)\Big].
\]
Apply Lemma~\ref{lem:FE} at each odd argument $r\in\{1,3,5,7\}$ and read off the $\log\Gamma_{\ell}(b/4)$ coefficients via~\eqref{E: E def}. The pivot base~(i) and shift step~(ii) give the explicit table
\begin{center}
\small
\begin{tabular}{c|cccc}
$r$ & $A_{2,r}(1,1)$ & $A_{2,r}(1,3)$ & $A_{2,r}(2,1)$ & $A_{2,r}(2,3)$ \\ \hline
$1$ & $0$ & $0$ & $1$ & $0$ \\
$3$ & $0$ & $0$ & $0$ & $1$ \\
$5$ & $-1$ & $0$ & $1$ & $0$ \\
$7$ & $0$ & $-1$ & $0$ & $1$
\end{tabular}
\end{center}
For example, $A_{2,5}(1,1)=A_{2,1}(1,1)-A_{1,1}(1,1)=0-1=-1$ and $A_{2,5}(2,1)=A_{2,1}(2,1)-A_{1,1}(2,1)=1-0=1$; the rows $r=3,7$ are similar. Collecting,
\begin{align*}
E_{1,1}(1)&=3\big(A_{2,5}(1,1)-A_{2,3}(1,1)\big)+\big(A_{2,1}(1,1)-A_{2,7}(1,1)\big)=3(-1)+0=-3,\\
E_{1,3}(1)&=3\big(A_{2,5}(1,3)-A_{2,3}(1,3)\big)+\big(A_{2,1}(1,3)-A_{2,7}(1,3)\big)=0+1=1,\\
E_{2,1}(1)&=3(1-0)+(1-0)=4,\qquad
E_{2,3}(1)=3(0-1)+(0-1)=-4.
\end{align*}
On the Duke side, Lemma~\ref{lem:duke-P} with $k=2$ gives $P_{2,1}(x)=x-1$ and $P_{2,2}(x)=1$, hence
$4P_{2,1}(1/4)=-3$, $-4P_{2,1}(3/4)=1$, $4P_{2,2}(1/4)=4$, $-4P_{2,2}(3/4)=-4$, matching~\eqref{E: exponent match}.
\end{proof}

\section{Coefficient identity}\label{sec:closure}

\paragraph{Narrative: closing the loop.}\label{par:closure-narrative}
It remains only to identify the $T$-binomial form of $E_{\ell,b}(n)$ with Duke's $P_{n+1,\ell}$. Proposition~\ref{prop:coeff-expansion} expands $\mathcal{S}_{n}$ as $\sum E_{\ell,b}(n)\log\Gamma_{\ell}(b/4)$; Lemma~\ref{lem:A-closed} and Proposition~\ref{prop:E-binomial} evaluate those coefficients as $T$-weighted binomials; Lemma~\ref{lem:T-binom-P} identifies the binomials with Duke's $P_{n+1,\ell}(b/4)$. Theorem~\ref{thm:induction} records the match. Sections~\ref{sec:typeN}--\ref{sec:Ln} then identify the odd infinite product with~$\mathcal{S}_{n}$.

\noindent\textbf{Goal.} Close the coefficient target~\eqref{E: exponent match}.

\begin{proposition}[Binomial form of $E_{\ell,b}$]\label{prop:E-binomial}
For every $n\ge 1$ and $1\le\ell\le n+1$,
\begin{align}\label{E: E binomial}
E_{\ell,1}(n)&=(-1)^{n+1-\ell}\sum_{k=0}^{n}T(n,k)\binom{n-k}{n+1-\ell},
\\
E_{\ell,3}(n)&=(-1)^{n-\ell}\sum_{k=0}^{n}T(n,k)\binom{k}{n+1-\ell}.
\end{align}
\end{proposition}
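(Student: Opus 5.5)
This is a direct substitution of Lemma~\ref{lem:A-closed} into the definition~\eqref{E: E def}, with $m=n+1$ throughout. For each template slot I first record its residue class and height. The numerator slot $r=4(n-k)+1$ has residue $1$ and height $t=n-k$. The denominator slot $r=4k+3$ has residue $3$ and height $t=k$. Both heights lie in $[0,n]$, so Lemma~\ref{lem:A-closed} applies to every slot occurring in~\eqref{E: E def}.

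For $b=1$, the vanishing clause of~\eqref{E: A closed} kills every denominator contribution $A_{n+1,4k+3}(\ell,1)$, since $4k+3\not\equiv 1\pmod 4$. The numerator contributions are
\[
A_{n+1,1+4(n-k)}(\ell,1)=(-1)^{n+1-\ell}\binom{n-k}{n+1-\ell}.
\]
Summing against $T(n,k)$ gives the first formula of Proposition~\ref{prop:E-binomial} directly.

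For $b=3$ the roles swap. Every numerator term $A_{n+1,4(n-k)+1}(\ell,3)$ vanishes by residue. The denominator term contributes
\[
-A_{n+1,3+4k}(\ell,3)=-(-1)^{n+1-\ell}\binom{k}{n+1-\ell}=(-1)^{n-\ell}\binom{k}{n+1-\ell},
\]
and summing against $T(n,k)$ gives the second formula.

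Two bookkeeping points need checking. First, the range $1\le\ell\le n+1=m$ is exactly the range in which Lemma~\ref{lem:A-closed} is stated, and its convention that the binomial vanishes when $t<m-\ell$ matches the convention implicit in~\eqref{E: E binomial}. Second, the pivot slots $r\in\{1,3\}$ (the cases $k=n$ for the numerator and $k=0$ for the denominator) fall under $t=0$. There~\eqref{E: A closed} returns $\delta_{\ell,n+1}$, consistent with the pivot base of Lemma~\ref{lem:FE}.

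As a sanity check I would compare with Proposition~\ref{prop:base-case} at $n=1$, where $T(1,0)=3$ and $T(1,1)=1$:
\[
E_{1,1}(1)=(-1)^{1}\bigl(3\tbinom{1}{1}+\tbinom{0}{1}\bigr)=-3,\qquad E_{1,3}(1)=\tbinom{0}{1}\cdot 3+\tbinom{1}{1}=1.
\]
Similarly $E_{2,1}(1)=4$ and $E_{2,3}(1)=-4$. All four values agree with the table.

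There is no real obstacle here. The only step requiring care is the sign in the $b=3$ case, where the minus sign from the denominator in~\eqref{E: E def} turns $(-1)^{n+1-\ell}$ into $(-1)^{n-\ell}$. The substantive work (the closed form for $A$) is already Lemma~\ref{lem:A-closed}. If that lemma's induction were in doubt, I would re-derive it from the shift step (ii) of Lemma~\ref{lem:FE} via Pascal's rule before substituting.
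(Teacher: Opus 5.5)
Your proposal is correct and follows the paper's proof: substitute Lemma~\ref{lem:A-closed} with $m=n+1$ into~\eqref{E: E def}, discard the slots in the wrong residue class, read off the heights $t=n-k$ (numerator, $b=1$) and $t=k$ (denominator, $b=3$), and absorb the minus sign from~\eqref{E: E def} into $(-1)^{n-\ell}$. Your $n=1$ check against Proposition~\ref{prop:base-case} is a useful confirmation but is not needed for the argument.
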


\begin{proof}
Apply Lemma~\ref{lem:A-closed} in~\eqref{E: E def}. For $b=1$ the denominator slots $4k+3$ are incongruent to~$1$, so their $A$-coefficients vanish, while the numerator slot $4(n-k)+1$ has height $t=n-k$. For $b=3$ the roles reverse (height $t=k$ on the denominator side), and the overall minus sign in~\eqref{E: E def} produces the displayed factor $(-1)^{n-\ell}$.
\end{proof}

\begin{lemma}[Eulerian--Duke binomial identity]\label{lem:T-binom-P}
For every $n\ge 0$ and $1\le\ell\le n+1$,
\begin{align}\label{E: T binom 1}
\sum_{k=0}^{n}T(n,k)\binom{n-k}{n+1-\ell}
&=\sum_{j=1}^{\ell}\binom{\ell-1}{j-1}(-1)^{j-\ell}(4j-1)^{n},
\\
\label{E: T binom 3}
\sum_{k=0}^{n}T(n,k)\binom{k}{n+1-\ell}
&=\sum_{j=1}^{\ell}\binom{\ell-1}{j-1}(-1)^{j-\ell}(4j-3)^{n}.
\end{align}
Consequently~\eqref{E: exponent match} holds for all $n\ge 1$.
\end{lemma}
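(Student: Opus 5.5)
The plan is to prove both identities with the generating function of the generalised Eulerian row, and then obtain the binomial sums by a Newton-series change of basis. Set $F_{n}(x):=\sum_{k=0}^{n}T(n,k)\,x^{k}$.

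\emph{Step 1 (rational generating function).} I would first prove, by induction on~$n$ from~\eqref{E: T recur}, that
\[
\frac{F_{n}(x)}{(1-x)^{n+1}}=\sum_{i\ge 0}(4i+3)^{n}x^{i}.
\]
The case $n=0$ is the geometric series. At $n=1$ we have $F_{1}=3+x$, and $(3+x)/(1-x)^{2}=\sum_{i}(4i+3)x^{i}$, which confirms the residue~$3$.

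For the inductive step, apply the operator $4x\,\tfrac{d}{dx}+3$ to $\sum_{i}(4i+3)^{n}x^{i}$; this raises the exponent $n$ to $n+1$. Computing the same operator on $F_{n}/(1-x)^{n+1}$ gives $F_{n+1}/(1-x)^{n+2}$ with
\[
F_{n+1}=(3+(4n+1)x)F_{n}+4x(1-x)F_{n}'.
\]
Comparing coefficients of $x^{k}$ in this formula should reproduce exactly $(4k+3)T(n,k)+(4(n-k)+5)T(n,k-1)$. I expect this coefficient bookkeeping to be the main place where an off-by-one or residue mix-up could creep in, so I would check it against $n=1,2$ before trusting it.

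\emph{Step 2 (Newton basis).} Write $g(i):=(4i+3)^{n}$ and use Newton's expansion $g(i)=\sum_{m}\binom{i}{m}\Delta^{m}g(0)$, where $\Delta^{m}g(0)=0$ for $m>n$. Together with $\sum_{i}\binom{i}{m}x^{i}=x^{m}/(1-x)^{m+1}$ this gives
\[
F_{n}(x)=\sum_{m=0}^{n}\Delta^{m}g(0)\,x^{m}(1-x)^{n-m}.
\]
Next substitute $x=z/(1+z)$ and multiply by $(1+z)^{n}$:
\[
\sum_{m}\Delta^{m}g(0)\,z^{m}=\sum_{k}T(n,k)\,z^{k}(1+z)^{n-k}.
\]
Reading off $z^{m}$ gives $\Delta^{m}g(0)=\sum_{k}T(n,k)\binom{n-k}{m-k}$. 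With $m=\ell-1$ we have $\binom{n-k}{\ell-1-k}=\binom{n-k}{n+1-\ell}$. On the other side, $\Delta^{\ell-1}g(0)=\sum_{j=1}^{\ell}\binom{\ell-1}{j-1}(-1)^{\ell-j}(4j-1)^{n}$ after the reindexing $j=i+1$. This is~\eqref{E: T binom 1}.

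\emph{Step 3 (reversed row).} For~\eqref{E: T binom 3} I would run Steps 1--2 on the reversed polynomial $x^{n}F_{n}(1/x)=\sum_{k}T(n,k)x^{n-k}$. From Step~1 via $x\mapsto 1/x$, or directly from the symmetric form of the recurrence, it satisfies $x^{n}F_{n}(1/x)/(1-x)^{n+1}=\sum_{i}(4i+1)^{n}x^{i}$. The same Newton argument, now with $k$ replaced by $n-k$, gives $\sum_{k}T(n,k)\binom{k}{n+1-\ell}=\Delta^{\ell-1}(4i+1)^{n}\big|_{i=0}$. Since $4i+1=4j-3$, this is the right side of~\eqref{E: T binom 3}.

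\emph{Step 4 (consequence).} Finally, combine with Proposition~\ref{prop:E-binomial} and expand $P_{n+1,\ell}(b/4)$ from~\eqref{E: Duke P def}:
\[
4^{n}P_{n+1,\ell}(b/4)=\sum_{j}\binom{\ell-1}{j-1}(-1)^{j+n+1}(4j-b)^{n}.
\]
For $b=1$ the sign $(-1)^{n+1-\ell}(-1)^{j-\ell}$ equals $(-1)^{j+n+1}$, since $(-1)^{-2\ell}=1$. For $b=3$ the sign $(-1)^{n-\ell}(-1)^{j-\ell}$ equals $-(-1)^{j+n+1}$, which supplies the minus sign in $E_{\ell,3}(n)=-4^{n}P_{n+1,\ell}(3/4)$. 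This gives~\eqref{E: exponent match}, and the base case of Proposition~\ref{prop:base-case} serves as a numerical check.
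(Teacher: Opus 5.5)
Your proposal is correct, but it takes a genuinely different route from the paper.

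The paper never uses a generating function. It proves \eqref{E: T binom 1} by induction on $n$, showing that both sides obey the common recurrence $X(n+1,\ell)=(4\ell-1)X(n,\ell)+4(\ell-1)X(n,\ell-1)$. On the left this comes from the Eulerian recurrence, Pascal's rule (which recombines the slot weights into $4n+4$) and the absorption identity $x\binom{x}{m}=m\binom{x}{m}+(m+1)\binom{x}{m+1}$. On the right it comes from splitting $4j-1=(4\ell-1)-4(\ell-j)$. A separate boundary argument shows both sides vanish for $\ell>n+1$, and the same moves are rerun with the slot families exchanged to get \eqref{E: T binom 3}.

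You instead prove once the rational generating function $\sum_{i\ge0}(4i+3)^{n}x^{i}=F_{n}(x)/(1-x)^{n+1}$. Your coefficient bookkeeping is right: the $x^{k}$-coefficient of $(3+(4n+1)x)F_{n}+4x(1-x)F_{n}'$ is exactly $(4k+3)T(n,k)+(4(n-k)+5)T(n,k-1)$. After that, the $T$-weighted binomial sums are read off directly as the forward differences $\Delta^{\ell-1}(4i+3)^{n}|_{i=0}$, i.e.\ as the coordinates of $F_{n}$ in the basis $x^{m}(1-x)^{n-m}$.

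What each approach buys: your route explains where Duke's polynomials come from (they are Newton coefficients of $(4i+3)^{n}$ and $(4i+1)^{n}$), and it needs no two-index recurrence or boundary analysis. The paper's route is more elementary and stays inside the recurrence manipulations of its own $T$-sum toolkit.

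One small point in Step~3: ``via $x\mapsto 1/x$'' tacitly uses the reciprocity theorem for rational generating functions of polynomials. The direct check is easier to write down. Setting $\tilde T(n,k):=T(n,n-k)$, the recurrence becomes $\tilde T(n+1,k)=(4k+1)\tilde T(n,k)+(4(n-k)+7)\tilde T(n,k-1)$, which is exactly what $4x\,\tfrac{d}{dx}+1$ produces on $\tilde F_{n}/(1-x)^{n+1}$.
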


\begin{proof}
Expanding~\eqref{E: Duke P def} at $x=\tfrac14$ gives
\[
4^{n}P_{n+1,\ell}\!\Big(\tfrac14\Big)
=\sum_{j=1}^{\ell}\binom{\ell-1}{j-1}(-1)^{j+n+1}(4j-1)^{n},
\]
so $(-1)^{n+1-\ell}4^{n}P_{n+1,\ell}(1/4)$ equals the right side of~\eqref{E: T binom 1}; likewise at $x=\tfrac34$ with $4j-3$. Combined with Proposition~\ref{prop:E-binomial}, the identity~\eqref{E: exponent match} reduces to~\eqref{E: T binom 1}--\eqref{E: T binom 3}.

We prove~\eqref{E: T binom 1} by induction on~$n$, working directly with the two displayed sums: they agree at $n=0$, both vanish whenever $\ell>n+1$, and both satisfy the same recurrence taking level~$n$ to level~$n+1$. Throughout, $\binom{x}{m}:=0$ for integer $m<0$.

\smallskip\noindent\textit{Boundary.} At $n=0$, $\ell=1$, both sides of~\eqref{E: T binom 1} equal~$1$. For $\ell>n+1$ both sides vanish, at every level: on the left every $\binom{n-k}{n+1-\ell}$ has negative lower index; on the right, substituting $j=i+1$ turns the sum into $(-1)^{1-\ell}\sum_{i=0}^{\ell-1}\binom{\ell-1}{i}(-1)^{i}(4i+3)^{n}$, which up to sign is the $(\ell-1)$-st finite difference of the polynomial $(4x+3)^{n}$ at $x=0$, and a finite difference of order exceeding the degree annihilates a polynomial (Euler~\cite{euler1755institutiones,gould1978euler}).

\smallskip\noindent\textit{Right side.} Write $(4j-1)^{n+1}=(4j-1)(4j-1)^{n}$ and split the extra factor against the target index, $4j-1=(4\ell-1)-4(\ell-j)$:
\begin{align*}
\sum_{j=1}^{\ell}\binom{\ell-1}{j-1}(-1)^{j-\ell}(4j-1)^{n+1}
&=(4\ell-1)\sum_{j=1}^{\ell}\binom{\ell-1}{j-1}(-1)^{j-\ell}(4j-1)^{n}
\\
&\qquad-4\sum_{j=1}^{\ell}(\ell-j)\binom{\ell-1}{j-1}(-1)^{j-\ell}(4j-1)^{n}.
\end{align*}
Now $(\ell-j)\binom{\ell-1}{j-1}=(\ell-1)\binom{\ell-2}{j-1}$: for $j<\ell$ both sides equal $(\ell-1)!/\big((j-1)!\,(\ell-1-j)!\big)$, and at $j=\ell$ both vanish. Since $(-1)^{j-\ell}=-(-1)^{j-(\ell-1)}$, the subtracted sum is $-(\ell-1)$ times the right side of~\eqref{E: T binom 1} at $(n,\ell-1)$. Hence the right side at $(n+1,\ell)$ equals $(4\ell-1)$ times the right side at $(n,\ell)$ plus $4(\ell-1)$ times the right side at $(n,\ell-1)$.

\smallskip\noindent\textit{Left side.} The same recurrence follows from the Eulerian recurrence~\eqref{E: T recur}. Insert~\eqref{E: T recur} and shift $k\mapsto k+1$ in the $T(n,k-1)$ sum:
\begin{align*}
\sum_{k=0}^{n+1}T(n+1,k)\binom{n+1-k}{n+2-\ell}
&=\sum_{k=0}^{n}\big(4(n-k)+1\big)\,T(n,k)\binom{n-k}{n+2-\ell}
\\
&\qquad+\sum_{k=0}^{n}(4k+3)\,T(n,k)\binom{n+1-k}{n+2-\ell}.
\end{align*}
Next, Pascal's rule $\binom{n+1-k}{n+2-\ell}=\binom{n-k}{n+2-\ell}+\binom{n-k}{n+1-\ell}$ in the second sum lets the two slot weights recombine into their constant total $\big(4(n-k)+1\big)+(4k+3)=4n+4$ on the $\binom{n-k}{n+2-\ell}$ terms:
\begin{align*}
\sum_{k=0}^{n+1}T(n+1,k)\binom{n+1-k}{n+2-\ell}
&=(4n+4)\sum_{k=0}^{n}T(n,k)\binom{n-k}{n+2-\ell}
\\
&\qquad+\sum_{k=0}^{n}(4k+3)\,T(n,k)\binom{n-k}{n+1-\ell}.
\end{align*}
The first sum on the right is the left side of~\eqref{E: T binom 1} at $(n,\ell-1)$, since $n+2-\ell=n+1-(\ell-1)$. In the remaining sum, write $4k+3=(4n+4)-\big(4(n-k)+1\big)$ and absorb the resulting factor $n-k$ into the binomial coefficient via
\[
(n-k)\binom{n-k}{n+1-\ell}
=(n+1-\ell)\binom{n-k}{n+1-\ell}+(n+2-\ell)\binom{n-k}{n+2-\ell}
\]
(the identity $x\binom{x}{m}=m\binom{x}{m}+(m+1)\binom{x}{m+1}$ at $x=n-k$, $m=n+1-\ell$; both sides are $x!/\big(m!\,(x-m-1)!\big)$ when $x>m\ge 0$, and the edge cases vanish or agree trivially). This evaluates the remaining sum as
\begin{align*}
\Big[(4n+4)-\big(4(n+1-\ell)+1\big)\Big]&\sum_{k=0}^{n}T(n,k)\binom{n-k}{n+1-\ell}
\\
&\qquad-4(n+2-\ell)\sum_{k=0}^{n}T(n,k)\binom{n-k}{n+2-\ell},
\end{align*}
and $(4n+4)-\big(4(n+1-\ell)+1\big)=4\ell-1$. Collecting, the $\binom{n-k}{n+2-\ell}$ sums carry total coefficient $(4n+4)-4(n+2-\ell)=4(\ell-1)$, so the left side at $(n+1,\ell)$ equals $(4\ell-1)$ times the left side at $(n,\ell)$ plus $4(\ell-1)$ times the left side at $(n,\ell-1)$, which is the same recurrence as the right side.

\smallskip\noindent\textit{Induction.} At $\ell=1$ the $(n,\ell-1)$ terms carry coefficient $4(\ell-1)=0$, and at $\ell=n+2$ the $(n,\ell)$ instances are equal (both zero) by the boundary paragraph; all other instances are equal by the induction hypothesis. Starting from $n=0$, the common recurrence therefore propagates equality to every level, proving~\eqref{E: T binom 1}.

\smallskip\noindent For~\eqref{E: T binom 3}, run the same four moves with the two slot families exchanged. On the right, split $4j-3=(4\ell-3)-4(\ell-j)$ and apply the same identity $(\ell-j)\binom{\ell-1}{j-1}=(\ell-1)\binom{\ell-2}{j-1}$. On the left, the reindexed sum now carries $\binom{k+1}{n+2-\ell}$; Pascal's rule again recombines the slot weights into $4n+4$, and the complement $4(n-k)+1=(4n+4)-(4k+3)$ followed by the absorption $k\binom{k}{m}=m\binom{k}{m}+(m+1)\binom{k}{m+1}$ leaves the common recurrence with $4\ell-3$ in place of $4\ell-1$. The boundary checks are identical, with $(4x+1)^{n}$ in the finite-difference argument, and the induction closes as before.
\end{proof}

\begin{theorem}[Coefficient identity]\label{thm:induction}
For all $n\ge 1$, $1\le\ell\le n+1$, and $b\in\{1,3\}$, the coefficients~\eqref{E: E def} satisfy~\eqref{E: exponent match}.
\end{theorem}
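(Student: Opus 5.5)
The theorem is essentially an assembly of results already in place, so I would prove it by chaining them and checking signs. First, fix $n\ge 1$, $1\le\ell\le n+1$ and $b\in\{1,3\}$, and start from the definition~\eqref{E: E def} of $E_{\ell,b}(n)$. Proposition~\ref{prop:E-binomial} rewrites it as a $T$-weighted binomial sum. That proposition is obtained by inserting the closed form of Lemma~\ref{lem:A-closed}: on each side only the slot congruent to $b$ modulo~$4$ survives, with height $t=n-k$ for $b=1$ and $t=k$ for $b=3$. We get
\[
E_{\ell,1}(n)=(-1)^{n+1-\ell}\sum_{k}T(n,k)\binom{n-k}{n+1-\ell},\qquad
E_{\ell,3}(n)=(-1)^{n-\ell}\sum_{k}T(n,k)\binom{k}{n+1-\ell}.
\]

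Second, I would invoke Lemma~\ref{lem:T-binom-P} to replace each $T$-binomial sum by the alternating sum $\sum_{j=1}^{\ell}\binom{\ell-1}{j-1}(-1)^{j-\ell}(4j-1)^{n}$, respectively with $(4j-3)^{n}$. Then I expand Duke's polynomial~\eqref{E: Duke P def} at $x=\tfrac14$ and $x=\tfrac34$, using $4^{n}(j-\tfrac14)^{n}=(4j-1)^{n}$ and $4^{n}(j-\tfrac34)^{n}=(4j-3)^{n}$. This gives
\[
4^{n}P_{n+1,\ell}(\tfrac14)=(-1)^{n+1-\ell}\sum_{j}\binom{\ell-1}{j-1}(-1)^{j-\ell}(4j-1)^{n},
\]
and the analogous expression at $\tfrac34$.

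Third, I compare signs. For $b=1$ the factor $(-1)^{n+1-\ell}$ appears on both sides, so $E_{\ell,1}(n)=4^{n}P_{n+1,\ell}(\tfrac14)$. For $b=3$ the proposition carries $(-1)^{n-\ell}=-(-1)^{n+1-\ell}$, which yields $E_{\ell,3}(n)=-4^{n}P_{n+1,\ell}(\tfrac34)$. These are exactly~\eqref{E: exponent match}.

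The only real content sits in the inductive recurrence of Lemma~\ref{lem:T-binom-P}, which is already established. The main obstacle in this assembly is therefore bookkeeping. I would verify the sign conventions of Proposition~\ref{prop:E-binomial} against the minus sign in~\eqref{E: E def}, and confirm that the vanishing cases of Lemma~\ref{lem:A-closed}, namely $t<m-\ell$ and mismatched residues, are consistent with the range $\ell\le n+1$. As a final check I would compare with the explicit $n=1$ table of Proposition~\ref{prop:base-case}, which gives $-3,1,4,-4$.
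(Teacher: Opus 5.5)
Your proposal is correct and follows the paper's own proof, which obtains the theorem by combining Proposition~\ref{prop:E-binomial} with Lemma~\ref{lem:T-binom-P}; your expansion of $4^{n}P_{n+1,\ell}$ at $x=\tfrac14$ and $x=\tfrac34$, together with the sign comparison $(-1)^{n-\ell}=-(-1)^{n+1-\ell}$, is exactly the reduction done in the first paragraph of that lemma's proof. The signs check out for both residues $b=1$ and $b=3$, and the $n=1$ values $-3,1,4,-4$ are the same consistency check the paper cites.
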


\begin{proof}
Proposition~\ref{prop:E-binomial} and Lemma~\ref{lem:T-binom-P} (the $n=1$ table of Proposition~\ref{prop:base-case} is the first case).
\end{proof}

\smallskip\noindent This closes the coefficient target~\eqref{E: exponent match}; with Propositions~\ref{prop:coeff-expansion} and~\ref{prop:duke-side} it completes the proof of Theorem~\ref{thm:main-finite}: $\mathcal{S}_{n}=\mathcal{D}_{n}=\beta'(-n)+(\log 4)\,\beta(-n)$ for every $n\ge 1$.

\section{Product side}\label{sec:typeN}

\noindent\textbf{Goal.} Develop the product side using the finite template~\eqref{E: S_n def}, block ratio~\eqref{E: block def}, and product~\eqref{E: Pn def} from Section~\ref{sec:main-results}. For odd~$n$, identify $\mathcal{L}_{n}=\log P_{n}$ with~$\mathcal{S}_{n}$.

\smallskip\noindent Recall $\mathcal{L}_{n}:=\log P_{n}$ when the product converges. Under absolute convergence the logarithm may be taken factorwise, giving $\mathcal{L}_{n}=\sum_{k\ge 0}\binom{n+k}{n}U_{n}(k)$: each block factor $B_{n,k}^{\binom{n+k}{n}}$ is positive and the series of absolute values converges (Lemma~\ref{lem:Pn-converge}). Theorem~\ref{thm:main-finite}, proved in Sections~\ref{sec:duke-side}--\ref{sec:closure}, identifies $\mathcal{S}_{n}=\mathcal{D}_{n}$ for all~$n$; Section~\ref{sec:Ln} shows $\mathcal{L}_{n}=\mathcal{S}_{n}$ when $n$ is odd.

\begin{lemma}[Odd-$n$ product convergence]\label{lem:Pn-converge}
If $n=2m-1$ is odd, the infinite product~$P_{n}$ in~\eqref{E: Pn def} converges absolutely and $\mathcal{L}_{n}=\log P_{n}=\sum_{k\ge 0}\binom{n+k}{n}U_{n}(k)$ is well defined, with $U_{n}(k)$ as in~\eqref{E: block def}.
\end{lemma}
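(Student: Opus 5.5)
The plan is to show that the block logarithm $U_{n}(k)$ decays like $k^{-(n+2)}$, while the exponent $\binom{n+k}{n}$ grows only like $k^{n}$. Then the series $\sum_{k}\binom{n+k}{n}|U_{n}(k)|$ converges, and with it the product $P_{n}$, absolutely. Rewrite the block in the normalised form of Theorem~\ref{thm:typeN}(ii): dividing numerator and denominator by the same total power $4^{\sum_j T(n,j)}$ gives
\[
B_{n,k}=\prod_{j=0}^{n}\left(\frac{k+u_{j}+j\cdot 0}{k+v_{j}}\right)^{T(n,j)}\quad\text{with slots }k+\tfrac{4j+3}{4}\text{ and }k+\tfrac{4(n-j)+1}{4}.
\]
So the numerator slots are $a_{j}=u_{j}$ and the denominator slots are $b_{j}=v_{j}$, each with multiplicity $T(n,j)$. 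By Remark~\ref{rem:signed}, integer multiplicities are just repeated slots. All slots are positive quarter-integers, hence not in $\mathbb{Z}_{\le 0}$.

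Next, expand for large $k$:
\[
U_{n}(k)=\sum_{j}T(n,j)\bigl[\log(1+u_{j}/k)-\log(1+v_{j}/k)\bigr]=\sum_{p\ge 1}\frac{(-1)^{p-1}}{p\,k^{p}}\sum_{j}T(n,j)\bigl(u_{j}^{p}-v_{j}^{p}\bigr).
\]
This series converges for $k>\max v_{j}$; the $\log k$ terms cancel because both sides carry equal total multiplicity. For $1\le p\le n$, the coefficients vanish by Lemma~\ref{lem:moment} (equivalently Lemma~\ref{lem:low-power}). For $p=n+1$, they vanish because $n$ is odd, by Lemma~\ref{lem:top-power-odd} after dividing by $4^{n+1}$. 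Hence $U_{n}(k)=O(k^{-(n+2)})$, with the implied constant controlled by the geometric tail of the series, and the only care needed is taking $k$ beyond the largest slot. The parity input enters exactly here: the lemmas give cancellation through order $n+1$.

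Finally, $0<\binom{n+k}{n}\le C k^{n}$ for $k\ge1$, so $\binom{n+k}{n}|U_{n}(k)|=O(k^{-2})$ is summable. Each factor $B_{n,k}^{\binom{n+k}{n}}$ is a positive real, so the product converges absolutely to $\exp\sum_{k}\binom{n+k}{n}U_{n}(k)$, and $\mathcal{L}_{n}$ is well defined as stated. Alternatively, one can cite Theorem~\ref{thm:typeN}(ii) directly, since its hypotheses (power sums through $n+1$) are exactly what was just verified; its proof via Choi's factorisation already contains the convergence. I would still present the elementary estimate, since the lemma asserts absolute convergence explicitly. The main obstacle is only bookkeeping: matching the paper's $a/b$ labels to the numerator and denominator, and noting the factor $4$ normalisation. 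The analytic content is routine once Lemma~\ref{lem:top-power-odd} is invoked.
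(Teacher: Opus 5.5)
Your proposal is correct and follows essentially the same route as the paper: expand $U_{n}(k)$ in powers of $k^{-1}$, kill the coefficients of orders $1$ through $n+1$ using Lemmas~\ref{lem:moment}/\ref{lem:low-power} and, for the top order, the odd-$n$ Lemma~\ref{lem:top-power-odd}, and conclude that $\binom{n+k}{n}U_{n}(k)=O(k^{-2})$ is summable. One cosmetic slip: the series converges once $k$ exceeds the largest slot overall, which is $u_{n}=n+\tfrac34$ rather than $\max_{j}v_{j}=n+\tfrac14$; for integer $k$ the two conditions coincide, so the argument is unaffected.
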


\begin{proof}
For large~$k$, $B_{n,k}=1+O(k^{-1})$ so $U_{n}(k)=\log B_{n,k}=O(k^{-1})$. Expand $U_{n}(k)=\sum_{j\ge 1}c_{j}k^{-j}$; Lemmas~\ref{lem:low-power} and~\ref{lem:top-power-odd} give $c_{j}=0$ for $1\le j\le n+1$, so $U_{n}(k)=O(k^{-n-2})$. Since $\binom{n+k}{n}\sim k^{n}/n!$, the tail $\sum_{k}\binom{n+k}{n}|U_{n}(k)|$ converges. Finally, since $|B_{n,k}-1|=O(k^{-1})$, the factors are eventually positive, so absolute convergence justifies $\log P_{n}=\sum_{k\ge 0}\binom{n+k}{n}\log B_{n,k}$.
\end{proof}

\begin{remark}[Even-$n$ divergence, sketch]\label{rem:even-diverge}
If $n=2m\ge 2$ is even, Lemma~\ref{lem:low-power} cancels the coefficients of $k^{-1},\ldots,k^{-n}$ in the expansion of $U_{n}(k)=\log B_{n,k}$, so $U_{n}(k)=c_{n}k^{-n-1}+O(k^{-n-2})$ for a constant $c_{n}$. The sketch~\eqref{E: top power defect} identifies $c_{n}$ with a nonzero multiple of $\beta(-n)$ (equivalently $E_{n}/2\neq 0$). Since $\binom{n+k}{n}\sim k^{n}/n!$, the partial sums $\sum_{k\le K}\binom{n+k}{n}U_{n}(k)$ then grow like a nonzero multiple of $\log K$, so $P_{n}$ diverges. (This remark is not used by Theorems~\ref{thm:main-finite}--\ref{thm:main-odd}.)
\end{remark}

\begin{proposition}[Type-$N$ product, odd $n$]\label{prop:typeN-odd}
Let $n=2m-1$ be odd. With block sequences $(a_{k})$, $(b_{k})$ built from~$T(n,k)$ at quarter-integers $(4k+3)/4$ and $(4(n-k)+1)/4$, Lemmas~\ref{lem:low-power}--\ref{lem:top-power-odd} supply the power-sum hypotheses of Theorem~\ref{thm:typeN}(ii). Hence
\begin{align}\label{E: typeN odd}
\prod_{k=0}^{\infty}\left[ \frac{(4k+3)^{T(n,0)}\cdots(4k+4n+3)^{T(n,n)}}{(4k+1)^{T(n,n)}\cdots(4k+4n+1)^{T(n,0)}}\right]^{\binom{n+k}{n}}
=\prod_{k=0}^{n}\left( \frac{\Gamma_{n+1}\!\big(\frac{4(n-k)+1}{4}\big)}{\Gamma_{n+1}\!\big(\frac{4k+3}{4}\big)}\right)^{T(n,k)}.
\end{align}
\end{proposition}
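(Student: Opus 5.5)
The plan is to apply Theorem~\ref{thm:typeN}(ii) at level $n$ directly, with signed slot families as in Remark~\ref{rem:signed}. First I would rewrite the block so that it has the shape $\prod_\ell (k+a_\ell)/(k+b_\ell)$. Dividing every linear factor by $4$ leaves each block unchanged, because the numerator and the denominator each contain $\sum_j T(n,j)$ factors, so the powers of $4$ cancel. After the rescaling, the numerator of the displayed block has slots $(4j+3)/4=u_j$, each with multiplicity $T(n,j)$. The denominator has slots $(4(n-j)+1)/4=v_j$, also with multiplicity $T(n,j)$.

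One point needs care here. In Theorem~\ref{thm:typeN}(ii) the $a$-slots sit in the numerator of the block but land in the \emph{denominator} of the gamma ratio. So I set $a$ equal to the multiset $\{u_j\}$ and $b$ equal to the multiset $\{v_j\}$, both weighted by $T(n,j)$. These are genuine multisets of equal size, by Lemma~\ref{lem:tsum}. Every slot is a positive quarter-integer, so none of them lies in $\mathbb{Z}_{\le 0}$.

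Next I would verify the power-sum hypotheses $\sum a^{j}=\sum b^{j}$ for $j=1,\dots,n+1$. For $1\le j\le n$ this is Lemma~\ref{lem:low-power}, or equivalently \eqref{E: moment zero}, after clearing the factor $4^{j}$. For $j=n+1$ it is Lemma~\ref{lem:top-power-odd}, and this is exactly where the hypothesis that $n$ is odd enters.

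With both hypotheses in hand, Theorem~\ref{thm:typeN}(ii) gives the value $\prod \Gamma_{n+1}(b)/\Gamma_{n+1}(a)$. That equals $\prod_k\bigl(\Gamma_{n+1}(v_k)/\Gamma_{n+1}(u_k)\bigr)^{T(n,k)}$, which is the right side of \eqref{E: typeN odd}.

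The main obstacle is bookkeeping rather than analysis. I need to confirm the orientation (numerator versus denominator) against the statement of Theorem~\ref{thm:typeN}(ii), and check that the $k=0$ factor of the shifted product matches the reindexing used in its proof. I would also note that absolute convergence is already supplied by Lemma~\ref{lem:Pn-converge}, so there is no issue in identifying the infinite product's value with the finite gamma ratio.
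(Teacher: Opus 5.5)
Your proposal is correct and matches the paper's proof essentially step for step. Both rescale by $4$ (the powers of $4$ cancel because numerator and denominator each carry $\sum_j T(n,j)$ factors), take $a=\{u_j\}$ and $b=\{v_j\}$ with multiplicities $T(n,j)$, check the power sums via Lemma~\ref{lem:low-power} for $j\le n$ and Lemma~\ref{lem:top-power-odd} for $j=n+1$, and read off the value from Theorem~\ref{thm:typeN}(ii). Your orientation check is right, and citing Lemma~\ref{lem:Pn-converge} is harmless but unnecessary, since Theorem~\ref{thm:typeN}(ii) already gives convergence.
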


\begin{proof}
Factor $4$ out of every slot: the block at index~$k$ is $\prod_{j}(k+u_{j})^{T(n,j)}/(k+v_{j})^{T(n,j)}$ with $u_{j}=(4j+3)/4$ and $v_{j}=(4(n-j)+1)/4$, since the powers of~$4$ cancel by Lemma~\ref{lem:tsum}. Verify $\sum a_{k}^{j}=\sum b_{k}^{j}$ for $j=1,\ldots,n+1$: for $j\le n$ use Lemmas~\ref{lem:low-power} and~\ref{lem:tsum}; at $j=n+1$ use Lemma~\ref{lem:top-power-odd}. Theorem~\ref{thm:typeN}(ii) then evaluates the product directly as
\[
\prod_{k=0}^{\infty}\Big[\cdots\Big]^{\binom{n+k}{n}}
=\prod_{k=0}^{n}\left(\frac{\Gamma_{n+1}(v_{k})}{\Gamma_{n+1}(u_{k})}\right)^{T(n,k)},
\]
which is~\eqref{E: typeN odd}.
\end{proof}

\section{Odd-$n$ product}\label{sec:Ln}

\noindent\textbf{Goal.} For odd~$n$, identify $\mathcal{L}_{n}=\log P_{n}$ with $\mathcal{S}_{n}$ and exponentiate to Theorem~\ref{thm:main-odd}.

\begin{proposition}[Odd-$n$ gamma normal form]\label{prop:Ln-odd}
If $n=2m-1$ is odd, then $\mathcal{L}_{n}=\mathcal{S}_{n}$, i.e.
\begin{align}\label{E: finite gamma normal odd}
\log P_{n}=\sum_{k=0}^{n}T(n,k)\left[\log\Gamma_{n+1}\!\Big(\frac{4(n-k)+1}{4}\Big)-\log\Gamma_{n+1}\!\Big(\frac{4k+3}{4}\Big)\right].
\end{align}
\end{proposition}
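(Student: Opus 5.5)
The plan is to combine Lemma~\ref{lem:Pn-converge} with Proposition~\ref{prop:typeN-odd} and then take logarithms. Almost all of the analytic work is already done in those two results; what remains is careful bookkeeping about logarithms and the rewriting of the block.

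First, for odd $n=2m-1$, Lemma~\ref{lem:Pn-converge} shows that $P_{n}$ converges absolutely. It also gives $\mathcal{L}_{n}=\log P_{n}=\sum_{k\ge0}\binom{n+k}{n}U_{n}(k)$, and every block $B_{n,k}$ is a positive rational, so the real logarithm is unambiguous.

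Second, I rewrite each block in slot form. Factoring $4$ out of every linear factor gives
\[
B_{n,k}=\prod_{j=0}^{n}\frac{(k+u_{j})^{T(n,j)}}{(k+v_{j})^{T(n,j)}}.
\]
The powers of $4$ cancel because numerator and denominator each carry total multiplicity $\sum_{j}T(n,j)=4^{n}n!$ (Lemma~\ref{lem:tsum}).

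Third, I check the hypotheses of Theorem~\ref{thm:typeN}(ii) at level $n$, using the signed slot families of Remark~\ref{rem:signed}. The block above has numerator slots $u_{j}$ and denominator slots $v_{j}$, so in the notation of~\eqref{E: typeN shift} I take $a$-slots $\{u_{j}\}$ and $b$-slots $\{v_{j}\}$, each with multiplicity $T(n,j)$. These are all positive, so no slot lies in $\mathbb{Z}_{\le0}$. The power sums agree for $j=1,\dots,n$ by Lemma~\ref{lem:low-power}, and for $j=n+1$ by Lemma~\ref{lem:top-power-odd}; this last step is where oddness of $n$ is used. Theorem~\ref{thm:typeN}(ii) then gives
\[
P_{n}=\prod_{j}\Gamma_{n+1}(v_{j})^{T(n,j)}\big/\Gamma_{n+1}(u_{j})^{T(n,j)},
\]
which is exactly~\eqref{E: typeN odd}.

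Finally, I take logarithms. The quarter-integer arguments are positive reals, where each $\Gamma_{n+1}$ is positive (by log-convexity and the normalisation), so $\log\Gamma_{n+1}$ is real and the logarithm of the finite product splits termwise. This yields~\eqref{E: finite gamma normal odd}, i.e.\ $\mathcal{L}_{n}=\mathcal{S}_{n}$ by~\eqref{E: S_n def}. Combined with Theorem~\ref{thm:main-finite} and $\beta(-n)=0$ (Lemma~\ref{lem:beta-neg}), this gives Theorem~\ref{thm:main-odd}.

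I expect the main obstacle to be orientation bookkeeping rather than analysis. Theorem~\ref{thm:typeN}(ii) places $\Gamma_{n+1}$ of the denominator slots in the numerator of its value, so I must track which family is $a$ and which is $b$ after factoring out the $4$s. I would first confirm the orientation at $n=1$, where the result should reduce to Example~\ref{ex:catalan} and give $e^{2K/\pi}$. I would also state explicitly why the principal logarithm applies factorwise. For the infinite product this is supplied by absolute convergence together with eventual positivity of the blocks. For the finite gamma ratio it is supplied by positivity of each $\Gamma_{n+1}$ value.
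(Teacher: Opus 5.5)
Your proposal is correct and follows the paper's argument: the paper's proof takes logarithms of the Type-$N$ evaluation in Proposition~\ref{prop:typeN-odd} and uses Lemma~\ref{lem:Pn-converge} to identify the left side with $\log P_{n}$. Your middle steps re-prove Proposition~\ref{prop:typeN-odd} itself: you factor out the $4$s, take $a$-slots $u_j$ and $b$-slots $v_j$, and check power sums via Lemmas~\ref{lem:low-power} and~\ref{lem:top-power-odd}. Your orientation is right, since at $n=1$ the statement reproduces Example~\ref{ex:catalan}.
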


\begin{proof}
Take logarithms of~\eqref{E: typeN odd}; Lemma~\ref{lem:Pn-converge} identifies the left side with $\log P_{n}$.
\end{proof}

\smallskip\noindent Combining Proposition~\ref{prop:Ln-odd} with Theorem~\ref{thm:main-finite} and the classical vanishing $\beta(-n)=0$ for odd~$n$ (Lemma~\ref{lem:beta-neg}), we obtain Theorem~\ref{thm:main-odd}: $e^{\beta'(-n)}=P_{n}$.


\section{Outlook}\label{sec:outlook}

The beta identity provides a template: an $L$-function derivative at a negative integer equals a finite multiple-gamma template, which in turn matches an explicit block product with Eulerian weights. The FE closed-form match is tailored to quarter-integer arguments and Duke--Imamo\u{g}lu polynomials; it should extend to other settings where similar gamma expansions exist.

Natural follow-up questions include Dirichlet eta at negative integers, Hurwitz zeta derivatives at quarter-integers~\cite{article:15}, and analogous product formulas for other real Dirichlet $L$-functions. The design recipe of Section~\ref{sec:recipe} is the general programme: the beta theorem is its first complete instance for an $L$-function derivative, and the Wallis hierarchies and $\pi^{M!}$ products of Part~I mark out how much of the constant landscape the same mechanism already reaches.

\section*{Declarations}

\begin{itemize}
\item \textbf{Prior versions.} An early version of the Part~I material appeared as the author's preprint \emph{Generalising the Wallis Product} (arXiv:1906.00122, 2019); parts of this work were also submitted as an undergraduate essay at the University of Warwick and as a master's dissertation at Durham University. This manuscript substantially revises and extends that earlier material.
\end{itemize}

\bibliographystyle{amsplain}
\bibliography{beta-references}

\end{document}